\newcommand{\FM}{{\mathbb F}}
\newcommand{\mt}{\mathbb{T}}
\newcommand{\mr}{\mathbb{R}}
\newcommand{\me}{\mathbb{E}}
\newcommand{\mc}{\mathbb{C}}
\newcommand{\mz}{\mathbb{Z}}
\newcommand{\mh}{\mathbb{H}}
\newcommand{\mn}{\mathbb{N}}
\newcommand{\ms}{\mathbb{S}}
\newcommand{\e}{\bar{e}}
\newcommand{\F}[1]{\ensuremath{\mathbb{F}_{#1}}}
\newtheorem{theorem}{Theorem}[section]
\newtheorem{proposition}[theorem]{Proposition}
\newtheorem{corollary}[theorem]{Corollary}
\newtheorem{lemma}[theorem]{Lemma}
\theoremstyle{definition}
\newtheorem{definition}[theorem]{Definition}
\newtheorem{example}[theorem]{Example}
\newtheorem{remark}[theorem]{Remark}
\title[Distortion and 3-manifolds]{Distortion in graphs of groups and Rapid Decay classification of $3$-manifold groups}
\author{Indira Chatterji, Fran\c{c}ois Gautero}
\date{\today}
\address{Universit\'e C\^ote d'Azur,
CNRS, LJAD (UMR CNRS 7351), Parc Valrose, 06108
Nice Cedex 2, France} \email{Indira.Chatterji@univ-cotedazur.fr, Francois.Gautero@univ-cotedazur.fr}
\keywords{Rapid Decay property, $3$-manifolds groups, graph of groups}
\subjclass[2000]{20F65, 20F67, 20F38}
\begin{document}

\begin{abstract}
The fundamental group of a closed irreducible $3$-dimensional manifold has the Rapid Decay property if and only if it is not virtually Sol. This is proved by studying distortion of length functions in graphs of groups, and the stability of the Rapid Decay property in polynomially distorted graph of groups.
\end{abstract}

\maketitle
\section{Introduction}
A {\em closed $3$-manifold} is a connected, compact $3$-manifold $M$ without boundary. It is {\em irreducible} if every embedded $2$-sphere bounds a $3$-ball. We say that a compact $3$-manifold $M$ {\em has $X$-geometry} if $\pi_1(M)$, the fundamental group of $M$ (we omit the base-point), is a discrete cocompact subgroup of the isometry group of a complete metric space $X$. In that case the group $\pi_1(M)$ has a similar large scale geometry as the metric space $X$ and they are quasi-isometric. Ideas and methods of $3$-dimensional topology have widely spread through geometric group theory. Although a lot is known about their structure, especially since the proof of the Poincare conjecture \cite{Perelman}, they present particularly rich phenomena and form a natural class where to test properties or conjectures, \cite{OyonoPitsch}. 

 The property of {\em Rapid Decay} for finitely generated groups first appeared in a famous paper by Haagerup \cite{Haagerup} (hence also sometimes called ``Haagerup inequality'' \cite{Talbi}), where it was proven for free groups on finitely many letters. The Rapid Decay property found numerous applications for instance in proving Novikov \cite{Connes-Moscovici} and Baum-Connes \cite{VLafforgue} conjectures among others. According to Connes in his seminal book \cite{Cbook}, groups with the Rapid Decay property are the ones admitting a non-commutative dual for which the continuous functions, which correspond to the reduced C*-algebra of the group, admits a subalgebra of smooth functions, which are the functions of rapid decay, namely functions whose decay at infinity is faster than any inverse of a polynomial in a length function over the group.  We refer to \cite{ChatterjiIntro} for a quick introduction and survey about this property, and to Subsection \ref{RD} for the facts relevant to our present discussion. For the purpose of this work, the Rapid Decay property is detected by looking at the algebra $\mc G$ of the group $G$: it is a classical fact that the multiplication of $\mc G$ extends to the $\ell^1$-completion, but not to the $\ell^2$-completion, unless the group $G$ is finite. However, the Rapid Decay property allows a control of the $\ell^2$-norm of the product of two elements in $\mc G$, polynomially in terms of the diameter, measured using a finite generating set, of the support of either one of those elements (Definition \ref{defRD}). The Rapid Decay property is satisfied, among others, by word hyperbolic groups, polynomial growth groups and Jolissaint in \cite{Jolissaint} showed that this property cannot be satisfied by any amenable group with exponential growth, and hence by any group containing an exponentially distorted copy of the integers. Sapir in \cite{Sapir} constructed finitely generated groups without the Rapid Decay property and without amenable subgroups of exponential growth, but it is an open question if such groups can be finitely presented.

We show that containing an exponentially distorted copy of the integers is the only obstruction in the case of 3-manifold groups, and our main result is the following.
\begin{theorem}
\label{sur les 3-varietes} Let $M$ be a closed, irreducible
$3$-manifold. The fundamental group $\pi_1(M)$ has the Rapid Decay property if and only if $M$ does not have Sol-geometry.
\end{theorem}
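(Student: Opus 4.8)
The plan is to combine the geometrization of $3$-manifolds with the two technical pillars of this paper: the analysis of distortion of length functions in graphs of groups, and the stability of the Rapid Decay property under polynomially distorted graphs of groups. Since the Rapid Decay property is a commensurability invariant, I would work throughout up to finite index. If $\pi_1(M)$ is finite there is nothing to prove, so assume it is infinite; then $M$ is aspherical, and by geometrization \cite{Perelman} either $M$ carries one of Thurston's eight geometries, or its JSJ decomposition is nontrivial with every piece either Seifert fibred or finite--volume hyperbolic.

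For the ``only if'' direction, if $M$ has Sol-geometry then $\pi_1(M)$ is virtually $\mathbb{Z}^2\rtimes_A\mathbb{Z}$ with $A\in GL_2(\mathbb{Z})$ Anosov; this group is solvable, hence amenable, and of exponential growth (solvable but not virtually nilpotent), so by Jolissaint's theorem \cite{Jolissaint} it fails Rapid Decay --- concretely because $\mathbb{Z}^2$, and hence a suitable cyclic subgroup, is exponentially distorted in it. For the geometric non-Sol cases, since $M$ is closed, irreducible and has infinite fundamental group, its geometry is one of $\mathbb{E}^3$, Nil, $\mathbb{H}^2\times\mathbb{R}$, $\widetilde{SL_2(\mathbb{R})}$ or $\mathbb{H}^3$. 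In the first two, $\pi_1(M)$ has polynomial growth; in the $\mathbb{H}^3$ case it is word hyperbolic; in the $\mathbb{H}^2\times\mathbb{R}$ case it is virtually a surface group times $\mathbb{Z}$; in the $\widetilde{SL_2}$ case it is virtually a central extension of a surface group by $\mathbb{Z}$. Rapid Decay holds in each case, being enjoyed by polynomial growth and by word hyperbolic groups, stable under direct products, and accessible in the $\widetilde{SL_2}$ case because the central fibre is undistorted so the same combing arguments apply.

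The heart of the argument is the non-geometric case, where $\pi_1(M)=\pi_1(\mathcal{G})$ is the fundamental group of a finite graph of groups $\mathcal{G}$ whose vertex groups are the $\pi_1$ of the Seifert and hyperbolic JSJ pieces and whose edge groups are the $\mathbb{Z}^2$'s carried by the JSJ tori. I would then verify the hypotheses of the paper's stability theorem: (i) every vertex group has Rapid Decay --- a Seifert piece with boundary has a finite-index subgroup of the form $F\times\mathbb{Z}$ with $F$ free, and a cusped hyperbolic piece is hyperbolic relative to its $\mathbb{Z}^2$ cusps, so both do; (ii) every edge group $\mathbb{Z}^2$ has Rapid Decay and polynomial growth; and (iii) the graph of groups $\mathcal{G}$ is polynomially distorted. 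Granting (iii), the stability theorem delivers the Rapid Decay property for $\pi_1(M)$.

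The hard part will be (iii). One first shows that in each piece the incident torus subgroups are quasi-isometrically embedded --- for a Seifert piece because in the $F\times\mathbb{Z}$ model the torus is the product of the central fibre direction with an undistorted cyclic subgroup of $F$ coming from a boundary curve of the base orbifold, and for a hyperbolic piece because peripheral subgroups of relatively hyperbolic groups are undistorted. The subtler point is to bound, polynomially in the ambient word length, the vertex-group syllables of an element and their number along the Bass--Serre tree. This is exactly where non-Sol enters: the Baumslag--Solitar/Anosov mechanism producing exponential distortion must be excluded not only across a single JSJ torus but around every cycle in the underlying graph, and here one invokes the structure theory of the JSJ decomposition --- the Seifert fibres of adjacent pieces are never identified, all base orbifolds have negative Euler characteristic, and such an $M$ has at most quadratic Dehn function --- to rule out any exponential term in the length function. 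Feeding the resulting polynomial bounds into the stability theorem for polynomially distorted graphs of groups then yields Rapid Decay for $\pi_1(M)$, completing the proof.
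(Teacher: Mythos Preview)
Your overall architecture---geometrization, then the stability of Rapid Decay under polynomially distorted graphs of groups (Proposition~\ref{distortion})---is exactly the paper's. The geometric cases are handled correctly, and for the closed Seifert/$\widetilde{SL_2}$ pieces the paper simply quotes Noskov's theorem on central extensions of hyperbolic groups rather than arguing by combings, but this is cosmetic.

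The genuine gap is in your step (iii). Two points. First, the remark that ``this is exactly where non-Sol enters'' is misplaced: once the JSJ decomposition is nontrivial the manifold is already non-geometric, so the Sol hypothesis has been discharged in the geometric case (the paper disposes of torus bundles, hence Sol, before ever reaching the graph-of-groups argument). Second, and more seriously, the appeal to a quadratic Dehn function does not do the job. The Dehn function controls area of fillings in $\pi_1(M)$; what is needed is the quantitative statement of Definition~\ref{defpd}, namely that a $\mathcal G$-loop representing a vertex-group element $g$ cannot be shorter than $L_{G_v}(g)$ by more than a polynomial factor in its edge-length. These are different invariants, and no implication from one to the other is available here.

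The paper supplies (iii) by two separate mechanisms, and this bifurcation is the real content. For a pure graph manifold, the vertex groups are all Seifert and there is no relative hyperbolicity to exploit; instead one proves \emph{seeming} undistortion (the crossing-edge maps are q.i.\ embeddings, Lemma~\ref{intermediaire1}(1)) together with a dynamical condition called \emph{tight dynamics} (Definition~\ref{ggtd}), which quantifies how the regular fibre of one Seifert piece, pushed across a JSJ torus, acquires a definite component in the meridian direction of the neighbour---this is where the fact that adjacent fibres are never matched is actually used, via an explicit analysis of linear Anosov maps (Lemma~\ref{Anosov}). Lemma~\ref{spdtdpd} then promotes seeming undistortion plus tight dynamics to genuine undistortion. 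For a mixed manifold, the paper instead groups each maximal Seifert/graph block into a single vertex, so that every edge meets a hyperbolic vertex; the bounded coset penetration of the cusp subgroups gives an \emph{at least linear separation} property (Definition~\ref{ggalls}), and Lemma~\ref{lspd} promotes seeming undistortion plus linear separation to undistortion. Your uniform treatment of the JSJ graph elides precisely this dichotomy, and the ingredients you list (fibres not identified, negative Euler characteristic, quadratic isoperimetry) are suggestive but do not assemble into a proof of polynomial distortion without one of these two bridges.
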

Recall that the Lie group Sol is defined by the following split extension
$$0 \rightarrow \mr^2 \rightarrow {\rm Sol} \rightarrow \mr \rightarrow
0$$
where $t \in \mr$ acts on $\mr^2$ by $(x,y) \mapsto (e^tx,e^{-t}y)$. Since this is an amenable group with exponential growth, then so is $\pi_1(M)$ for any closed manifold with $\mathrm{Sol}$ geometry and his group does not have Rapid Decay property, so our main theorem is really about the other direction.

Kneser's theorem decomposes any closed orientable $3$-manifold $M$ as the connected sum of {\it prime summands}, that is either manifolds homeomorphic to $\ms^2 \times \ms^1$, or {\it irreducible} ones, namely those for which every $\ms^2$-embedding extends to the ball. Van Kampen's theorem expresses the fundamental group of $M$ as the free product of the fundamental groups of these prime summands. Since $\pi_1(\ms^2 \times \ms^1) = \mz$, it has the Rapid Decay property. Hence by our main theorem, the fundamental group of a prime summand has Rapid Decay property if and only if it does not have $\mathrm{Sol}$-geometry. Moreover, Rapid Decay property is stable by commensurability\footnote{If two groups share an isomorphic finite index subgroup, then one has Rapid Decay property if and only if, the other one does as well.}, so since any non-orientable closed $3$-manifold admits an orientable double cover, we deduce the following
\begin{corollary}
The fundamental group of a closed $3$-manifold has Rapid Decay property if and only if none of the prime summands of an orientable cover has $\mathrm{Sol}$-geometry.
\end{corollary}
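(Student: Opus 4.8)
The plan is to derive the corollary formally from Theorem~\ref{sur les 3-varietes}, using only the prime decomposition of $3$-manifolds together with the stability properties of Rapid Decay recalled in the introduction. First I would remove the orientability hypothesis: given an arbitrary closed $3$-manifold $N$, take $\widetilde N$ to be the orientation double cover if $N$ is non-orientable, and $\widetilde N=N$ otherwise. In either case $\pi_1(\widetilde N)$ sits in $\pi_1(N)$ with index at most $2$, so by stability of Rapid Decay under commensurability, $\pi_1(N)$ has Rapid Decay if and only if $\pi_1(\widetilde N)$ does. The same commensurability principle shows that whether some prime summand carries $\mathrm{Sol}$-geometry does not depend on which finite orientable cover is chosen, so the statement is well-posed; I would record this remark so that ``an orientable cover'' in the statement is unambiguous.

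Next, applying Kneser's prime decomposition to the closed orientable manifold $\widetilde N$, write $\widetilde N = M_1\#\cdots\#M_k$ with each $M_i$ either $\ms^2\times\ms^1$ or irreducible, and orientable; van Kampen's theorem then gives $\pi_1(\widetilde N)\cong\pi_1(M_1)*\cdots*\pi_1(M_k)$. The algebraic input I need here is that Rapid Decay is stable under finite free products in both directions: a free factor is a retract of the free product, hence undistorted, so it inherits Rapid Decay from the ambient group; conversely a finite free product of groups with Rapid Decay has Rapid Decay, by Jolissaint \cite{Jolissaint}, or equivalently as the special case of the graph-of-groups stability theorem of the present paper in which all edge groups are trivial (so the distortion is trivially polynomial). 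Hence $\pi_1(\widetilde N)$ has Rapid Decay if and only if every $\pi_1(M_i)$ does.

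It remains to identify the prime summands whose fundamental group has Rapid Decay. If $M_i\cong\ms^2\times\ms^1$, then $\pi_1(M_i)\cong\mz$ has Rapid Decay, and $M_i$ does not have $\mathrm{Sol}$-geometry, since $\mz$ is not a cocompact lattice in the isometry group of $\mathrm{Sol}$. If $M_i$ is irreducible, Theorem~\ref{sur les 3-varietes} says $\pi_1(M_i)$ has Rapid Decay exactly when $M_i$ has no $\mathrm{Sol}$-geometry. In both cases $\pi_1(M_i)$ has Rapid Decay if and only if $M_i$ has no $\mathrm{Sol}$-geometry; combined with the previous paragraph and the first reduction, $\pi_1(N)$ has Rapid Decay if and only if no prime summand of $\widetilde N$ has $\mathrm{Sol}$-geometry, which is the assertion.

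I do not expect a genuine obstacle: the corollary is essentially a bookkeeping consequence of the main theorem. The one point I would be careful to state precisely is the free-product step, which uses both directions of free-product stability of Rapid Decay together with the uniqueness (Milnor) of the prime decomposition, so that ``the prime summands of $\widetilde N$'' is a well-defined finite list.
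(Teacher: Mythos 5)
Your argument is correct and follows essentially the same route as the paper: orientation double cover plus commensurability, Kneser/van Kampen prime decomposition, the $\ms^2\times\ms^1$ summands contributing $\mz$, Theorem~\ref{sur les 3-varietes} for the irreducible summands, and stability of Rapid Decay under finite free products. Your extra care in justifying the ``only if'' direction via free factors being retracts (hence undistorted, hence inheriting Rapid Decay) is a point the paper leaves implicit, but it is not a different approach.
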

Some technical tools for the proof of Theorem \ref{sur les 3-varietes} are of independent interest. One is the following result, which is a generalization of Jolissaint's results \cite{Jolissaint} on stability of the Rapid Decay property.
\begin{proposition}
\label{distortion}
Let $\mathcal G$ be a graph of groups with loose polynomial distortion and finitely generated vertex groups. If each vertex-group has Rapid Decay property, then so does $\pi_1(\mathcal G)$.
\end{proposition}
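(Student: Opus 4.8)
The plan is to prove the Rapid Decay property not directly with respect to a word length on $\pi_1(\mathcal G)$, but with respect to a length function $L$ adapted to the graph-of-groups structure. Let $\pi_1(\mathcal G)$ act on its Bass--Serre tree $T$ and fix a base vertex $v_0$. For $g\in\pi_1(\mathcal G)$ set $L(g)=d_T(v_0,gv_0)+\sum_i |g_i|$, where $g=g_0e_1g_1\cdots e_ng_n$ is a reduced form of $g$ along the geodesic $[v_0,gv_0]$ and $|g_i|$ denotes the word length of the syllable $g_i$ in the vertex group containing it. Since $L$ obviously dominates the word length of $\pi_1(\mathcal G)$ (the underlying graph being finite, so that finite generation of the vertex groups makes $\pi_1(\mathcal G)$ finitely generated), and since the loose polynomial distortion hypothesis is precisely what is needed to bound $L$ from above by a fixed polynomial in that word length, the two are polynomially equivalent length functions. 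Because the Rapid Decay property is insensitive to replacing a length function by a polynomially equivalent one (Definition~\ref{defRD}; see also \cite{Jolissaint}), it is enough to prove it for the pair $(\pi_1(\mathcal G),L)$.

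For this one has to establish a convolution inequality of the form $\|f*h\|_2\le C(1+r)^s\|f\|_2\|h\|_2$ for every $h$ and every $f$ supported on the $L$-ball of radius $r$. I would follow the geometric decomposition used by Haagerup \cite{Haagerup} for free groups and by Jolissaint \cite{Jolissaint}: decompose $f=\sum_n f_n$ and $h=\sum_m h_m$ according to the value of $d_T(v_0,\cdot\,v_0)$, and exploit that, $T$ being a tree, every geodesic triangle with vertices $v_0$, $xv_0$ and $zv_0$ is a tripod. A factorization $z=xy$ is then described by a single integer $\beta$, the length of the cancelled ``zone'' (half the defect $\ell_T(x)+\ell_T(y)-\ell_T(z)$), together with the location of the tripod centre on $[v_0,zv_0]$. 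The structural fact that makes the estimate work is that in such a product the syllable data of $z$ agrees with that of $x$ strictly before the zone and with that of $y$ strictly after it, and differs only at the tripod centre, where it is a single product of (iterated edge-inclusion images of) syllables of $x$ and of $y$. Hence the Rapid Decay estimate of a vertex group is invoked exactly once per factorization, at that collision vertex --- this is what keeps the argument from collapsing into an uncontrolled product of one polynomial factor per vertex of the geodesic. Bounding the contribution of each pair $(\beta,\text{centre})$ by a vertex-group Rapid Decay estimate, summing over $\beta$ and over $n,m\le r$, and finishing with Cauchy--Schwarz and the bounded support of $f$, yields the claimed polynomial bound, hence Rapid Decay for $(\pi_1(\mathcal G),L)$ and therefore for $\pi_1(\mathcal G)$.

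Two remarks streamline the bookkeeping. First, one may instead argue by induction on the number of edges of the underlying graph: collapsing a spanning tree reduces to a one-vertex graph of groups, and adding edges one at a time reduces the statement to the two elementary cases of an amalgamated product $A*_CB$ and of an HNN extension $A*_C$ over a polynomially distorted edge group, which is the real content; working directly on $T$ as above has the advantage of not requiring one to check that the distortion hypothesis is inherited by the intermediate groups. Secondly, the edge groups, being subgroups of the vertex groups, automatically have the Rapid Decay property, so no separate hypothesis on them is needed; one must only make sure that the edge-inclusion-valued factors occurring inside the collision term are handled without assuming the edge groups finitely generated, which is legitimate because those factors are not summation parameters but are determined by, and estimated through, the syllables of $x$ and $y$ together with the comparison of vertex-group lengths already built into $L$.

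The step I expect to be the main obstacle is the analysis at the collision vertex. When the tail of $x$ cancels against the head of $y$, the surviving residue lies in one vertex group but can be arbitrarily far, in that group's word metric, from each syllable it originates from --- both because of cancellation taking place inside the vertex group and because successive reductions introduce nested conjugations by edge inclusions (the $\phi^{\pm1}$ terms of Britton's lemma in the HNN case, and their analogue for amalgams). Peeling off the vertex-group Rapid Decay estimate there requires a telescoping argument: one applies that estimate to the residue while simultaneously redistributing the $\ell^2$-mass among the cancelled syllables of $x$ and $y$, and verifies that this redistribution costs only a further polynomial factor in $r$. Making this redistribution precise, and checking that all the constants can be chosen uniformly over the finitely many vertices and edges of $\mathcal G$, is the technical core of the argument.
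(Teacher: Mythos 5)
Your overall route is genuinely different from the paper's. The paper does not run a Haagerup-type tripod argument on the Bass--Serre tree at all: it fixes a normal form (a ``normal $\Gamma$-set'') realizing each $g\in\pi_1(\mathcal G)$ as a reduced word $\widehat g$ in the $\mathcal G$-free product ${\mathcal G}_\Gamma=\ast_{v}G_v\ast\FM_E$, shows via loose polynomial distortion that $L_\Gamma(\widehat g)\leq Q(L_{\mathcal G}(g))$ (Lemma \ref{consequence 1}, which is the exact analogue of your polynomial comparison of $L$ with the word length), transfers the convolution data isometrically to ${\mathcal G}_\Gamma$ (Lemma \ref{magic}), and then invokes Jolissaint's theorem that the free product of finitely many Rapid Decay groups has Rapid Decay. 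All the combinatorial difficulty of multiplying reduced forms is thereby pushed into the free product, where the edge letters generate a free factor and no Britton-type reduction ever occurs. Your first step (replacing the word length by the adapted length $L$, legitimate because Rapid Decay only improves under passing to a polynomially larger length function) is correct and matches the paper's; it is in the convolution estimate itself that the two approaches diverge.

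The gap is the one you yourself flag: the estimate at the collision vertex. After $\beta$ reductions the surviving syllable is a nested expression of the form $g_{n-\beta}\cdot\imath_{\bar e_{n-\beta+1}}\bigl(\imath_{e_{n-\beta+1}}^{-1}(\cdots g_n h_0\cdots)\bigr)\cdot h_\beta$, i.e.\ iterated crossing maps applied to interleaved products of cancelled syllables of $x$ and of $y$. To invoke the vertex-group Rapid Decay inequality there you must (i) dominate the $\ell^2$-sum over these residues by a convolution of two auxiliary functions whose $\ell^2$-norms are controlled by the cancelled data of $x$ and of $y$ separately --- a genuine redistribution problem, since the crossing maps interleave the two sides rather than keeping them as a clean left factor times right factor --- and (ii) bound the support, in the collision vertex group's own word metric, of the factor coming from $x$ by a polynomial in $r$; this is where loose polynomial distortion must re-enter the argument, not merely in the initial length comparison. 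You defer exactly this to ``a telescoping argument'' without carrying it out, so the proposal as written is a plan rather than a proof, and it is not evident that the redistribution costs only a polynomial factor uniformly in $\beta$ and in the collision vertex. This is precisely the work the paper's strategy is designed to avoid: by transferring to ${\mathcal G}_\Gamma$ it quotes Jolissaint's free-product theorem off the shelf, whereas completing your tree argument would amount to reproving that theorem while simultaneously handling the reductions by hand.
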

We refer to Definition \ref{loosepd}, Definition \ref{defpd} and Remark \ref{apdlpd} for the discussion on distortion. The proof of that proposition, as well as basic facts about graphs of groups, their fundamental groups and the Rapid Decay property, is in Section \ref{RDGG}. Section \ref{PolDisto} investigates some technical conditions to ensure (a stronger version of) polynomial distortion for the graphs of groups describing the various manifolds, leading to the following important step.
\begin{proposition}\label{GraphMfldPolDisto}
Let $G$ be the fundamental group of a graph manifold $M$. The graph of groups $\mathcal G$ whose vertex-groups are the fundamental groups of the maximal Seifert submanifolds in $M$, and such that $\pi_1({\mathcal G})=G$, is undistorted.
\end{proposition}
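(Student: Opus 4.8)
The plan is to exploit the geometric decomposition of a graph manifold $M$: by definition, $M$ is obtained by gluing Seifert-fibered pieces along incompressible tori, and after amalgamating adjacent Seifert pieces that glue to a Seifert piece one arrives at the \emph{maximal} Seifert submanifolds $M_1,\dots,M_k$, whose fundamental groups are the vertex groups of the graph of groups $\mathcal G$, with edge groups $\ZM^2$ coming from the gluing tori. The JSJ tori are then, by construction, \emph{non-nested} in the sense that the gluing maps never send a fiber to a fiber on both sides. I would work with the Bass--Serre tree $T$ of $\mathcal G$ and its universal cover, and the natural action of $G=\pi_1(M)$ on $T$; undistortion of $\mathcal G$ means exactly that the length of an element $g$ of an edge- or vertex-group, measured in $G$, is comparable up to a multiplicative constant to its length measured intrinsically in that subgroup. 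So the goal reduces to: for each vertex group $\pi_1(M_i)$ and each edge group $\ZM^2$, give a linear lower bound on the word length in $G$ in terms of the intrinsic word length.

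The key step is a coarse retraction / normal-form argument. First I would recall (from the general graph-of-groups theory in Section \ref{RDGG}) the formula for word length in $\pi_1(\mathcal G)$ in terms of reduced words $a_0 t_1 a_1 \cdots t_n a_n$ alternating syllables in vertex groups and stable letters; undistortion amounts to controlling the first and last syllables when the whole word represents an element of a single vertex group. For the vertex groups this is essentially automatic because each $\pi_1(M_i)$ is quasi-isometrically embedded: a maximal Seifert piece $M_i$ carries either $\widetilde{\SL_2}$, $\HM^2\times\RM$, or $\mathrm{Nil}$ geometry (never Sol, never hyperbolic in a graph manifold), and in all these cases $\pi_1(M_i)$ sits inside $G$ as the stabilizer of a vertex of $T$ acting on the corresponding model space, and one gets the embedding estimate from the product/central-extension structure: the base hyperbolic orbifold group is convex-cocompact hence undistorted, and the central $\ZM$ (the fiber) is undistorted because the fibers of adjacent pieces are \emph{not} identified — this is precisely where the ``graph manifold, not Sol'' hypothesis is used, and it is the crux. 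I expect the technical heart to be showing that the $\ZM$-fiber direction of each piece is undistorted in $G$: one must rule out the Sol-type phenomenon where a translation in the fiber of one piece becomes exponentially compressed by conjugating through a long word alternating between two pieces whose fibers are transverse. I would handle this by a ping-pong / translation-length estimate on $T$ combined with the intrinsic geometry of the two faces of each JSJ torus, showing that such an alternating conjugation \emph{grows} the element rather than shrinking it, because transversality of the two fiber directions forces the displacement on $T$ to accumulate linearly.

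Concretely, the steps in order: (1) set up the Bass--Serre tree $T$ of $\mathcal G$, the action of $G$, and the length formula for $\pi_1(\mathcal G)$; (2) prove each edge group $\ZM^2$ is undistorted in $G$ by showing it is undistorted in each of its two adjacent vertex groups (routine, since $\ZM^2$ is a $\pi_1$-injective incompressible torus in a Seifert piece, hence quasi-isometrically embedded in $\pi_1(M_i)$, which already contains an undistorted $\ZM^2$ by the $\HM^2\times\RM$/$\widetilde{\SL_2}$/$\mathrm{Nil}$ structure) and then using the fact that a subgroup undistorted in a vertex group is undistorted in the whole graph of groups when the graph has no worse-than-undistorted behavior across edges; (3) prove each vertex group is undistorted in $G$ — reduce, via the length formula and (2), to bounding the boundary syllables of a reduced word representing a vertex-group element, and close the loop by the transversality/ping-pong argument above that prevents fiber compression; (4) assemble: undistortion of all vertex and edge groups is the definition of $\mathcal G$ being undistorted. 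The main obstacle, as flagged, is step (3)'s fiber-direction estimate, and I would address it by a careful analysis of how the two fiber classes on a JSJ torus intersect in $\ZM^2$ and translating that into a linear lower bound on word length via the tree action; everything else is bookkeeping on top of the graph-of-groups length formula established earlier.
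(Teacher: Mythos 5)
You have correctly located the crux --- ruling out a Sol-type exponential compression of the fiber direction when an element is conjugated through a long word crossing several JSJ tori --- and your step (4) is essentially right: for a constant polynomial, Definition \ref{defpd} does amount to the vertex-groups being quasi-isometrically embedded in $\pi_1(\mathcal G)$ (the edge-groups are not part of that definition, though here they happen to be undistorted too). But the mechanism you propose for the key step does not work, and the reduction feeding into it begs the question. On the reduction: your step (2) invokes ``a subgroup undistorted in a vertex group is undistorted in the whole graph of groups when the graph has no worse-than-undistorted behavior across edges,'' but that quoted hypothesis is exactly what has to be proved; moreover, controlling only conjugation by pure edge-paths (what the paper calls being \emph{seemingly} undistorted, Definition \ref{defspd}) is genuinely weaker than controlling arbitrary $\mathcal G$-loops, which interleave horizontal travel inside the vertex groups with edge crossings --- Example \ref{SeemPolbutExpDisto} is a seemingly polynomially distorted graph of groups with exponential distortion, so this is not bookkeeping. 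On the mechanism: an element $g$ of a vertex group stabilizes a vertex of the Bass--Serre tree, so every word representing it traces a \emph{loop} in the tree; its translation length is zero and any representative path backtracks to its starting point, so no ping-pong or translation-length estimate on $T$ can ``accumulate linearly'' --- the compression you must exclude happens entirely along a backtracking path and is invisible to tree displacement.

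What actually closes this gap in the paper is a quantitative propagation estimate, packaged as \emph{tight dynamics} (Definition \ref{ggtd}) and established in Lemma \ref{intermediaire1}: whenever crossing a JSJ torus shortens a power of a fiber, the periodic/reducible/Anosov classification of the gluing map (Lemma \ref{Anosov}) forces the image to acquire a definite ``meridian'' component transverse to the fiber on the far side, and distinct boundary directions diverge at least linearly (in fact exponentially) inside each Seifert piece (via Lemma \ref{preparation}), so that component cannot be cancelled cheaply before the next edge crossing. Summing these local inequalities along an arbitrary $\mathcal G$-loop (Lemma \ref{spdtdpd}) upgrades seeming undistortion plus tight dynamics to genuine undistortion. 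Your ``transversality of the two fiber directions'' is the right geometric input for the first half of this, but without the intra-piece divergence estimate and the bookkeeping that propagates it along a loop that mixes horizontal and vertical moves, the argument is incomplete.
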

Section \ref{3dmflds} discusses the structure of various manifolds as fundamental groups of graphs of groups having at most polynomial distortion and vertex groups having the Rapid Decay property. We now explain how all the pieces fit together for the proof of our main result.
\begin{proof}[Proof of Theorem \ref{sur les 3-varietes}]
Let $M$ be a closed, irreducible $3$-manifold which does not have Sol-geometry. Because Rapid Decay property is stable up to commensurability, without loss of generality, we can assume that $M$ is orientable and that $\pi_1(M)$ is infinite. If $M$ is finitely covered by a torus bundle over $\ms^1$, then, since by assumption $M$ does not have Sol-geometry, it is finitely covered by the mapping-torus either of a periodic homeomorphism of the torus, or of a reducible one acting as a twist along an invariant curve: in both cases, the fundamental group of such a finite cover is the semi-direct product of $\mz^2$ by $\mz$ acting by an at most polynomially growing automorphism. Hence this group has polynomial growth so by \cite{Jolissaint}[Corollary 2.1.10] has Rapid Decay property, and so does the fundamental group of $M$. Assume then that $M$ is not finitely covered by a torus bundle over $\ms^1$. If $M$ is finitely covered by a torus bundle over the interval, then its fundamental group is commensurable to $\mz^2$, which again has Rapid Decay property because it has polynomial growth.

We are now left with the case where $M$ is a genuine $3$-manifold (Definition \ref{genuine}). If $M$ is a Seifert manifold, then it is virtually a central extension of a hyperbolic group (see Proposition \ref{classic Seifert}) so has Rapid Decay property according to \cite{Nos}. If $M$ is atoroidal and not Seifert, geometrization leaves us with a hyperbolic manifold and Rapid Decay property, both in the closed and finite volume interior (i.e. compact with boundary tori) cases, follows from classical results from the literature, see Proposition \ref{evidence}. Finally assume that $M$ is neither atoroidal nor Seifert, then either $M$ is a graph manifold and Proposition \ref{GraphMfldPolDisto} expresses its fundamental group as the fundamental group of an at most polynomially distorted graph of groups with vertex groups being fundamental groups of Seifert manifolds. So, Proposition \ref{distortion} gives the conclusion. Or, $M$ is a mixed $3$-manifold and again we express $\pi_1(M)$ as the fundamental group of an at most polynomially distorted graph of groups, but now with vertex groups being either fundamental groups of hyperbolic, or Seifert or graph manifolds with boundary, see Lemma \ref{tautologie} and Proposition \ref{mixedprop}. Once again, combined with Proposition \ref{distortion}, this gives the conclusion. \end{proof}
\begin{remark} According to \cite{ChatterjiIntro}, Rapid Decay property for Seifert manifolds and graph manifolds with boundary or for mixed manifolds (with or without boundary) is a consequence of other works like \cite{WangYu}, \cite{GMAVS} or \cite{PrWise}, the last two ones giving a suitable proper action of the fundamental group of a mixed manifold on a finite dimensional CAT(0) cubical complex. In the case of non-Nil manifolds, Theorem \ref{sur les 3-varietes} is also a consequence of the combination theorem for hierarchically hyperbolic spaces given in \cite{BHSII}, see Corollary 7.9 and Theorem 10.1 therein. Notice also that, once proven the Rapid Decay property for atoroidal and graph manifolds, the case of mixed manifolds can be deduced by combining \cite{Dahmani} and \cite{DS}. We do not appeal to any of these works and we treat both closed manifolds and manifolds with boundary without distinction, in a self-contained way. Using the geometry in the decomposition as graphs of groups to understand the large scale structure of 3-manifolds groups has been recently used in \cite{HRSS} or \cite{Zbinden} for example. 
\end{remark}
{\bf Acknowledgements:} We thank Jason Behrstock for the discussion on hierarchically hyperbolic spaces as well as the related references, and Luisa Paoluzzi for her interest. I. Chatterji is partially supported by the ANR projects GALS ANR-23-CE40-0001 and GOFR ANR-22-CE40-0004.
\section{Graphs of groups and the Rapid Decay property}
\label{RDGG}
In this section we recall basic facts about graphs of groups, show that polynomial distortion, which is a geometric condition on the graph of groups, allows to deduce the Rapid Decay property. Recall that a {\em length function} on a group $G$ is a positive real function which is symmetric, sub-additive with respect to the group operation and vanishes on the neutral element. For instance, the word-length associated to any generating set of $G$ is a length function, but we will only be dealing with finite generating sets here. If $G$ embeds in a larger finitely generated group $H$, then $G$ also inherits a length function from the length from $H$, but those could be quite different, and this difference is crucial in what follows.
\subsection{The fundamental group of a graph of groups} We borrow the following presentation from \cite{ReinfeldtWeidmann} (see also \cite{KapovichWeidmann}), and refer the interested reader to \cite{Serre} for an introduction to Bass-Serre theory (another point of view is developed in \cite{Dicks}).
\begin{definition}\label{gog}
A {\it graph of groups} is a quadruplet ${\mathcal G}=(\Gamma,\{G_e\},\{G_v\},\{\imath_e\})$ where:
\begin{itemize}
  \item $\Gamma=(V,E)$ is a graph, not necessarily simplicial,
  \item for each oriented edge $e\in E$, the group $G_e$ associated to the edge $e$, called {\em edge-group} is finitely generated
 and $G_e =G_{\e}$, where $\e$ is the edge $e$ with the opposite orientation,
 \item for each vertex $v\in V$, the {\em vertex-group} group $G_v$ is discrete and finitely generated,
 \item  for each oriented edge $e\in E$, there is a monomorphism $\imath_{e} \colon G_e \rightarrow
G_{t(e)}$, where $t(e)$ is the terminal vertex of $e$. 
\end{itemize}
Since the edge $e$ is oriented, there is also a monomorphism $\imath_{\e} \colon G_e \rightarrow
G_{i(e)}$, where $i(e)$ is the initial vertex of $e$. The {\it crossing edge} maps $\imath_{\e}(G_e)<G_{i(e)}$, the image of the edge-group in its initial vertex, to the terminal vertex-group $G_{t(e)}$. It spells out as
$$c_e=\imath_e\circ\imath_{\e}^{-1}:\imath_{\e}(G_e)<G_{i(e)}\to G_{t(e)},$$
and $c_e\circ c_{\e}=id$.
\begin{figure}[htpb]
    \centering
        \includegraphics[width=\linewidth]{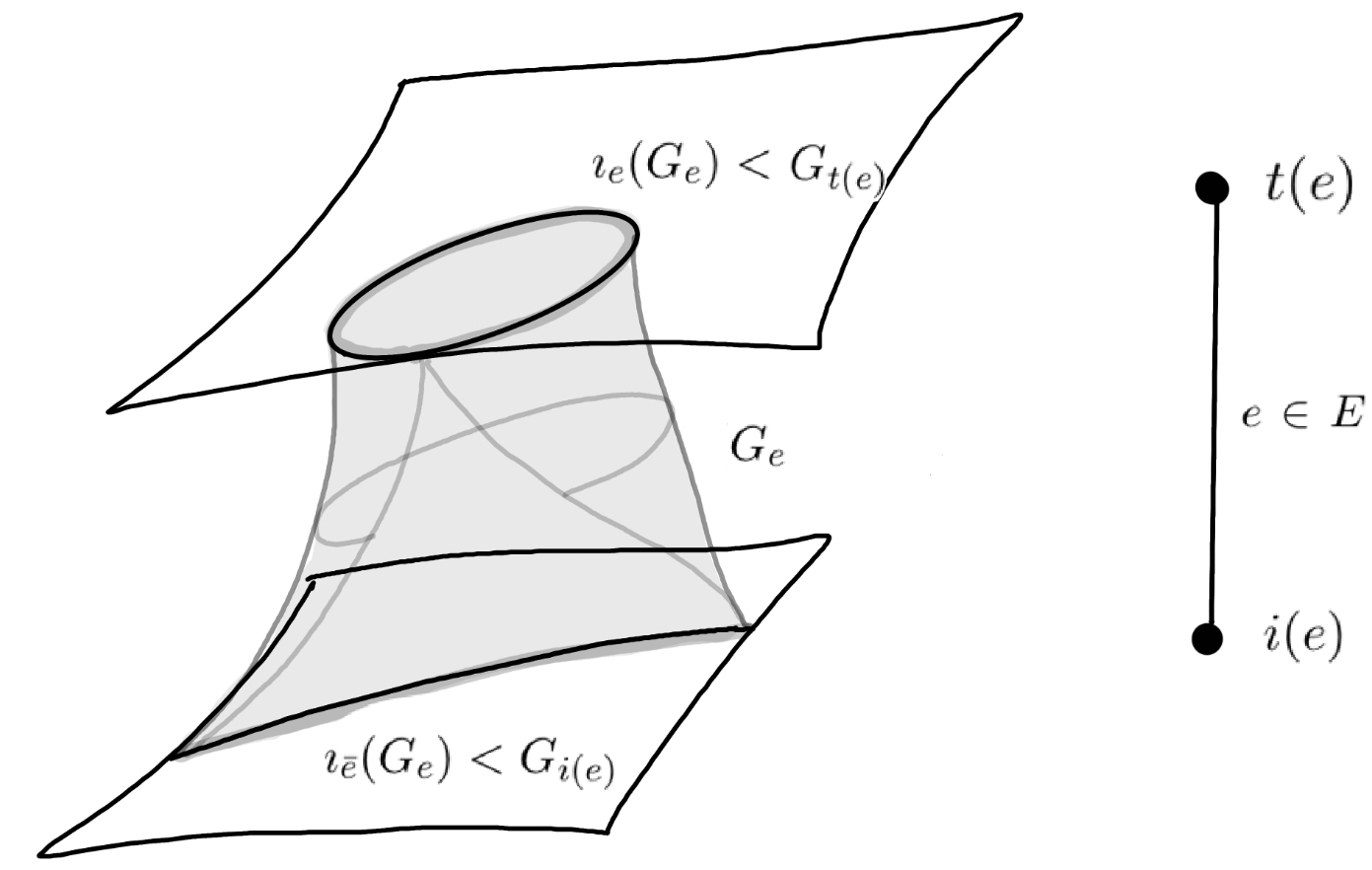}
    \caption{The domain and range of the crossing edge map, here distorting the edge subgroup along the way.}
    \label{Crossing edge}
    \end{figure}
The {\em $\mathcal G$-free product}, denoted by ${\mathcal G}_\Gamma$, is the free product of the vertex groups with the free group generated by the unonriented edges of $\Gamma$, namely
$${\mathcal G}_\Gamma=\ast_{v\in V}G_v\ast\FM_E.$$
A {\em (finite) $\mathcal G$-sequence} from $v \in V$ to $v^\prime \in V$ is a sequence 
$$s=(g_0,e_1,g_1,\cdots,e_k,g_k)$$
where $k \geq 0$, the number of edges in $s$, is the {\em edge-length of $s$}, and $e_1 \cdots e_k$ is an
edge-path in $\Gamma$ from $v$ to $v^\prime$. This is the {\it vertical} part of the sequence $s$, whereas $g_0 \in G_v$, $g_k \in G_{v^\prime}$ and $g_j \in G_{t(e_j)} = G_{i(e_{j+1})}$ for $j=1,\cdots,k-1$ form the {\it horizontal} part (despite a planar analogy, horizontal and vertical travels do in general not commute). A $\mathcal G$-sequence is called a {\it $\mathcal G$-loop} if it starts and ends at the same vertex of $V$. The concatenation of $\mathcal G$-sequences is defined in the obvious way. 

We write $s \sim t$ when two $\mathcal G$-sequences are equivalent
for the relation generated by the following elementary equivalences 
\begin{eqnarray*}(g,e,g^\prime) &\sim & (g \imath_{\e}(h), e, \imath_e(h^{-1}) g^\prime)\\
(g,e,1,\e, k) &\sim& (gk)\end{eqnarray*} 
where $e\in E$, $h\in G_e$ and $g, k\in G_{i(e)}, g'\in G_{t(e)}$. We then denote by $[s]$
the equivalence class of a $\mathcal G$-sequence $s$.
\end{definition}
\begin{definition}
Let $\mathcal G$ be a graph of groups and choose a base-vertex $v_0 \in V$, the {\it fundamental group} $\pi_1(\mathcal G)$ is the group of equivalence classes of $\mathcal G$-loops based at $v_0$.
\end{definition} 
One checks that this is a group for the law given by the concatenation: $[s] [t] := [st]$, the neutral element is the class of the length one sequence consisting of the neutral element, and the inverse of $[s]$ is given by the loop obtained reading any representative $s$ backwards and inverting the elements at each step. Note that if we change the base vertex then we get an isomorphic fundamental group, hence we omit the base-vertex. 
\begin{remark} Let $T\subseteq\Gamma$ be a maximal tree, then the fundamental group $\pi_1(\mathcal G)$ is the following quotient
$$\pi_1(\mathcal G)={\mathcal G}_\Gamma/R$$
where $R$ is the following set of relations:
$$R=\{e=1, \imath_e(h) = \imath_{\e}(h) \hbox{ if }e \in E\cap T,\hbox{ and }\imath_e(h) = e^{-1} \imath_{\e}(h) e\hbox{ if }e \in E\setminus T\}.$$
Indeed, a {\em reduction} of a finite $\mathcal G$-sequence $s$ consists of the substitution of a subsequence in $s$ of the form $(g,e,\imath_e(h),\e,g^\prime)$
by the element $g \imath_{\e}(h) g^\prime\in G_{t(e)}$. If $t$ is the resulting $\mathcal G$-sequence, then $s \sim t$. A $\mathcal G$-sequence is
{\em reduced} if no reduction is possible. Any $\mathcal G$-sequence is equivalent to a reduced one. More precisely, if $s =(g_0,e_1,g_1,\cdots,e_k,g_k)$ and
$t = (g^\prime_0,e^\prime_1,g^\prime_1,\cdots,e^\prime_{k^\prime},g^\prime_{k^\prime})$ are two equivalent reduced $\mathcal G$-sequences then $k=k^\prime$,
$e^\prime_j = e_j$ for $j=1,\cdots,k$ and, if we denote by $s_j$ (resp. $t_j$) the initial subpath of $s$ (resp. $t$) ending with $e_j$ for $j \geq 1$ ($s_0,t_0$ being trivial), then $s^{-1}_j t_j$ defines an element in $G_{t(e_j)}$ which belongs to $\imath_{\e_{j+1}}(G_{e_{j+1}})$
for $j=0,\cdots,k-1$.
\end{remark}
\begin{definition}\label{stanLength}Given a graph of groups $\mathcal G$, with fixed generating sets on the vertex groups, we will call {\it standard} set of generators for $\pi_1(\mathcal G)$ the one given by the union of the generators of the vertex-groups, along with the edges in $\Gamma$, which is also a generating set for ${\mathcal G}_\Gamma$. We will denote by $L_\Gamma$ (resp. $L_{\mathcal G}$) the length function on the group $\mathcal G_\Gamma$ (resp. $\pi_1(\mathcal G)$) obtained from that set of generators. Namely, for $s=(g_0,e_1,g_1,\cdots,e_k,g_k)$ a reduced ${\mathcal G}$-sequence, then $L_{\Gamma}(s)=k+\sum_{i=1}^kL_{G_{t(e_i)}}(g_i)$, so that $L_{G_v}(g)=L_{\Gamma}(g)$ for any $g\in G_v$ and hence:
$$L_{\mathcal G}(s) = \min\{n=k+\sum_{i=1}^kL_{G_{t(e_i)}}(g'_i)\ |\,s = (g'_0,e_1,g'_1,\cdots,e_k,g'_k)\}$$
Both, as well as the length functions associated with the vertex-groups (which agree with the restriction of the $\Gamma$-length function) will be termed {\em standard length functions}. Notice that if $\Gamma$ is a tree to begin with, then $\pi_1(\mathcal G)$ is the free product of the vertex-groups, amalgamated over the edge-groups. 
\end{definition}
\begin{definition}
\label{defpd}
A graph of groups $\mathcal G$ {\em has at most polynomial distortion} if for some (and hence any) set of standard length functions, there exists a polynomial $P$ such that for any vertex-group $G_v$ in ${\mathcal G}$, for any $g \in G_v$, for any $n \in \mn$, for any ${\mathcal G}$-loop $s$ of edge-length $n$ such that $[s]=g$, then the following inequality holds:
$$L_{G_v}(g) \leq P(n) L_{\Gamma}(s)$$
We shall say that $\mathcal G$ {\em is undistorted} if the polynomial $P$ is a constant.
\end{definition}
In words, polynomial distortion implies that traveling outside the vertex-group provides a polynomial shortcut at most, but it gives the more precise information that traveling horizontally provides no significant shortcut, and that traveling vertically provides a polynomial shortcut at most. We will prove that Definition \ref{defpd} is satisfied by the graph of groups emanating from $3$-manifolds groups which was the original motivation. We can however weaken a little bit the definition and still get the Rapid Decay property in the context of more general graph of groups:
\begin{definition}
\label{loosepd}
A graph of groups $\mathcal G$ {\em has loose polynomial distortion} if for some (and hence any) set of standard length functions, there exists a polynomial $P$ such that for any vertex-group $G_v$ in ${\mathcal G}$, for any $g \in G_v$, for any ${\mathcal G}$-loop $s$ such that $[s]=g$, then the following inequality holds:
$$L_{G_v}(g) \leq P(L_{\Gamma}(s)).$$
\end{definition}
\begin{remark}
\label{croissant}
Since $\displaystyle \sum^n_{i=0} a_i x^i \leq \sum^n_{i=0} |a_i| x^i$ for $x$ in $\mr^+$, we can substitute the polynomial $P$ of Definitions \ref{defpd} and \ref{loosepd} by a polynomial with same degree such that:
\begin{itemize}
 \item $P$ is increasing over $\mr^+$.
 \item For any non negative numbers $r_i$, $i=1,\cdots,l$, one has
 $$\sum^l_{i=1}P(r_i) \leq P(\sum^l_{i=1} r_i).$$
\end{itemize}
{\em These assumptions will be assumed for all the polynomial dealt with throughout the paper.}
\end{remark}
\begin{remark}
\label{defpratique} A ${\mathcal G}$-loop $s$ that satisfies $[s]=g\in G_v$ can be reduced to a single vertex and hence its corresponding edge-path in the graph $\Gamma$ has even length $n=2k$ and is contractible. For our purpose, an equivalent but more tractable characterization of a graph of groups $\mathcal G$ having at most polynomial distortion is to restrict to loops $s=pq^{-1}$ where $p$ and $q$ are any couple of reduced ${\mathcal G}$-sequences of edge-length $k$ starting at $v$, with same endpoint and such that $[pq^{-1}]=g$.

Let $D:{\mn}\to{\mn}$ be a non-decreasing function, and let $H<G$ be two finitely generated groups. If we take a generating set for $H$ that is contained in a generating set for $G$, then for any $h\in H$, then $L_H(h)\leq L_G(h)$, because there are some shortcuts in $G$ that are not available in $H$. We say that $H$ has {\it distortion at most $D$} if, for any $h\in H$, those shortcuts are controlled by $D$, namely
$$L_H(h)\leq D(L_G(h))L_G(h).$$
Gromov in \cite{GroAs} Chapter 3 defines the distortion function as 
$${\rm Disto(n)}:=\frac{diam_H(H\cap B_G(n))}{n}.$$
One checks that $2D(n)\geq {\rm Disto}(n/2)$, so that both definitions are equivalent.

For $\mathcal G$ a graph of groups with loose polynomial distortion, one notices that the vertex-groups are all at most polynomially distorted in $G=\pi_1({\mathcal G})$. Indeed, since
$$L_G(h)=\min\{L_{\Gamma}(w)|\ [w]=g\}$$
we can deduce polynomial distortion from Definition \ref{defpd} or \ref{loosepd}. The edge-groups can a priori have any distortion in their terminal vertex-group, as the Sol example below shows. \end{remark}
\begin{remark}
\label{apdlpd}
Since the integer $n$ in Definition \ref{defpd} is the edge-length (the length of the vertical part) of the loop, it implies the loose polynomial distortion of Definition \ref{loosepd}. The converse is not true: observe in particular that, in a graph of groups with at most polynomial distortion, each crossing edge map is a quasi isometric embedding (take $n=1$ in Definition \ref{defpd}) which does not hold in general in the setting of loose polynomial distortion (see ${\mathcal G}_5$ in the example below).
\end{remark}
\begin{example}
\label{1stexample}
Take the graph of group ${\mathcal G}_0$ with one single vertex $v$ and one single edge $e$, with $G_v=\FM_2=\left<x_1,x_2\right>$, $G_e=\FM_2=\left<a_1,a_2\right>$ and $\imath_e(a_1)=\imath_{\e}(a_1)=x_1$ whereas $\imath_e(a_2)=x_2$ and $\imath_{\e}(a_2)=x_2x^2_1$. Then ${\mathcal G}_0$ has at most polynomial distortion.

Notice that if, instead of the last embedding above we take $\imath_{\e}(a_2)=x_2^2$, then this creates a copy of the Baumslag-Solitar group BS$(1,2)$, which is the fundamental group of the graph of groups ${\mathcal G}_1$ where $G_v=\mz=\left<x\right>$ and $G_e=\left<a\right>$, with $\imath_e(a)=x$ and $\imath_{\e}(a)=x^2$ so that $\pi_1({\mathcal G}_1)=\left<x,e|exe^{-1}=x^2\right>$ and one can see that the distortion is exponential.

Consider now the Formanek-Procesi group (see \cite{Formanek}). It is the fundamental group of the graph of groups ${\mathcal G}_2$ with one vertex $v$ and two edges $e_1,e_2$ defined as follows: $G_v = \F{3}=\left<x_1,x_2,y\right>$, $G_{e_i} = \F{3} = \left< a_1,a_2,b \right>$ with embeddings $\imath_{e_j}(a_i)=x_i$ $\imath_{\e_j}(a_i)=x_i$, $\imath_{\e_j}(b)=y$ and $\imath_{e_j}(b)=yx_j$ for $j=1,2$. As ${\mathcal G}_0$, the graph ${\mathcal G}_2$ has at most polynomial distortion.

Let us now consider the amalgamated product of the fundamental groups of two Sol manifolds $\mz^2 \rtimes_A \mz$ where $A$ is any hyperbolics matrix of $SL_2(\mz)$. The associated graph of groups ${\mathcal G}_3$ has two vertices $v,w$, with vertex-groups $G_v = G_w = \mz^2 \rtimes_A \mz$, edge-group $G_e = \mz^2$ and is at most polynomially distorted although the edge-group is exponentially distorted in both vertex-groups. One builds a similar example ${\mathcal G}_4$ by amalgamating two copies of the Baumslag-Solitar group $BS(1,2)$ along the subgroup $\mz$ which is exponentially distorted in both.

Let us now consider a polynomially, non linearly, growing automorphism $\alpha$ of $\F{3}=\left<x_1,x_2,x_3\right>$, for instance $\alpha(x_1) = x_1 x^2_2 x^3_3, \alpha(x_2) = x_2 x^4_3$, $\alpha(x_3)=x_3$. Let $G = \F{3} \rtimes_\alpha \mz$ and $H = \F{3} * \mz$. The graph of groups ${\mathcal G}_5$ with two vertices $v,w$ with vertex-groups $G_v=G$ and $G_w=H$, one single edge $e$ with edge-group $G_e=\F{3}$ with the obvious embeddings of $G_e$ into $G_v$ and $G_w$ has loose polynomial distortion but does not have at most polynomial distortion.
\end{example}
We are now defining the {\em universal covering of a graph of group}, also known as {\em Bass-Serre tree}, which is a tree with vertices and edges labelled by cosets of vertex groups. 
\begin{definition}
Let $\mathcal G =(\Gamma,\{G_e\},\{G_v\},\{\imath_e\})$ be a graph of groups. The {\em universal covering of $\mathcal G$} is the labelled tree ${\mathcal T}_{\mathcal G} = (\widetilde{\Gamma},\varphi)$ where $\widetilde{\Gamma}=(\widetilde{V}, \widetilde{E})$ is a tree defined as follows:
\begin{enumerate}
\item The vertices $\tilde{v}\in\widetilde{V}$ are equivalence classes of $\mathcal G$-sequences $s$ in $\mathcal G$ starting at $v_0$ and ending at $v$, namely $$\tilde{v}=sG_v=\{sg \hbox{ a $\mathcal G$-sequence with } g \in G_v\}.$$
\item Two vertices $\tilde{v},\tilde{v}^\prime\in\widetilde{V}$ are connected by an edge $\tilde{e}=(\tilde{v},\tilde{v}^\prime)\in\widetilde{E}$ with $i(\tilde{e})=\tilde{v}$, $t(\tilde{e})=\tilde{v}^\prime$ if and only if there is a $\mathcal G$-sequence $s$ from $v_0$ to $v=i(e)$ and there is $h \in G_v$ such that if $\tilde{v}= s G_v$, then $\tilde{v}^\prime = (s, h, e) G_{v^\prime}$ with $v^\prime = t(e)$.
\end{enumerate}
The base-point of ${\mathcal T}_{\mathcal G}$ is $x_0 = 1_GG_{v_0}$.
The labelling $\varphi:\widetilde{\Gamma}\to {\rm Cosets}(\pi_1({\mathcal G}))$ assigns to vertices in $\widetilde{V}$ the label with the terminal vertex in $\mathcal G$ of the corresponding $\mathcal G$-sequence: $\varphi(\tilde{v})= sG_v$ if $v$ is this terminal vertex and $s$ the $\mathcal G$-sequence starting at $v_0$ and ending at $v$. For the edge $\tilde{e}$ between $\tilde{v}=sG_v$ and $\tilde{v}'=(s,h,e)G_{v'}$ with $e\in E$, the label is given by $\varphi(\tilde{e})=(s, h) G_e$. There are then maps
$$\varphi({\imath}_{\tilde{e}})=\widetilde{\imath}_e=(s,h) \imath_{e} \colon {G}_{{e}} \rightarrow \varphi({t(\tilde{e})})=(s,h,e)G_{v'}$$
embedding the edge-groups of ${\mathcal G}$ in the cosets of the vertex-groups in $\pi_1({\mathcal G})$, namely the vertices of the universal cover ${\mathcal T}_{\mathcal G}$.
\end{definition}
\begin{remark}
 Observe that ${\mathcal T}_{\mathcal G}$ admits a natural left-action of $\pi_1(\mathcal G)$ by pre-composition by a loop based at $v_0$ of the ${\mathcal G}$-sequence. The stabilizers of the vertices (resp. of the edges) under this action are conjugates of the vertex-groups (resp. edge-groups) of $\mathcal G$. We denote by $\pi_{\mathcal G} \colon {\mathcal T}_{\mathcal G} \rightarrow \mathcal G$ the covering-map, i.e. the projection which identifies the vertices and edges of $\widetilde{\Gamma}$ with same label in $\Gamma$. The terminology for graph of groups lifts naturally to the universal covering. If $s$ is a ${\mathcal T}_{\mathcal G}$-sequence then $\pi_{\mathcal G}(s)$ is a $\mathcal G$-sequence. The elements of the fundamental group are identified to the ${\mathcal T}_{\mathcal G}$-sequences from $x_0$ to another lift in $\pi^{-1}_{\mathcal G}(v_0)$, since they project under $\pi_{\mathcal G}$ to $\mathcal G$-loops based at $v_0$. The crossing edge map $c_e$ associated to an edge $e$ from Definition \ref{gog} lifts to a crossing edge map $\tilde{c}_{\tilde{e}}=\widetilde{\imath}_e\circ{\widetilde\imath}_{\e}^{-1}$ allowing to move the subset $s\widetilde{\imath}_{\e}(G_e)$ of the coset $sG_{i(e)}$ into the coset $(s,e)G_{t(e)}$, and satisfying $\tilde{c}_e\circ\tilde{c}_{\e}=id$.
\end{remark}
When given a set of generators $S$ of a group $G$, a {\em geodesic word} for this set of generators is a word $w$ in $S \cup S^{-1}$ whose number of letters equals the length of the corresponding element for the length function on $G$ associated to $S$. %
\begin{definition}
Let $\mathcal G = (\Gamma,\{G_e\},\{G_v\},\{\imath_e\})$ be a graph of groups together with a fixed choice of generating sets of the edge-groups. We endow $\pi_1({\mathcal G})$ and $\mathcal G_{\Gamma}$ with the corresponding standard length functions (Definition \ref{stanLength}). Let ${\mathcal T}_{\mathcal G}=(\widetilde{\Gamma},\varphi)$ be the universal covering of $\mathcal G$. 
\begin{enumerate}
  \item A {\em $\mathcal G$-path} is a $\mathcal G$-sequence $s=(w_0,e_1,w_1,\cdots,e_k,w_k)$ where $k \geq 0$ is an integer, each $w_i$ is a geodesic word for the standard length function of the corresponding vertex-group in ${\mathcal G}$, each $e_i$ is an edge in ${\mathcal T}_{\mathcal G}$ such that the sequence $e_1\cdots e_k$ is a path in the tree $\widetilde{\Gamma}$. The $\mathcal G$-path is called {\it reduced} if the corresponding path in the tree $\widetilde{\Gamma}$ is reduced.
  \item If $p$ is a $\mathcal G$-path, the {\em $\Gamma$-length of $p$}, denoted by $|p|_{\Gamma}$ is equal to the sum of the $G_v$-lengths of the vertex-elements in $p$ with the number of edges in $p$. A {\em $\mathcal G$-geodesic} is any $\mathcal G$-loop $p$ such that $|p|_{\Gamma} = L_{\mathcal G}([p])$.
  \item A {\em normal $\mathcal G$-set} is a set of $\mathcal G$-geodesics in bijection with $\pi_1(\mathcal G)$.
  \item A {\em normal $\Gamma$-set} is a set of reduced $\mathcal G$-paths, in bijection with a normal $\mathcal G$-set, and so that any of its elements is constructed from the $\mathcal G$-geodesics in this normal $\mathcal G$-set by iteration of the following process: apply a reduction at a geodesic word and substitute the resulting word by a geodesic word.
  \item If $\mathcal G$ is equipped with normal $\mathcal G$- and $\Gamma$-sets, for each element $g \in \pi_1(\mathcal G)$, we denote by $\widetilde{g}$ (resp. $\widehat{g}$) the element of the normal $\mathcal G$-set (resp. $\Gamma$-set) which represents $g$.
  \end{enumerate}
 \end{definition}
While $\mathcal G$-sequences belong to the free product ${\mathcal G}_{\Gamma}$, the subset of $\mathcal G$-paths lie in the universal covering ${\mathcal T}_{\mathcal G}$ of $\mathcal G$.
Moreover, for $\mathcal G$ equipped with normal $\mathcal G$- and $\Gamma$-sets, then each element of the normal $\Gamma$-set is a geodesic word for the $L_\Gamma$-length function. Indeed, the $\Gamma$-length function in restriction to any vertex-group agrees with the standard length function of this vertex-group and the elements of the normal $\Gamma$-set are reduced. Finally, notice that a reduced $\mathcal G$-path is a $\mathcal G_{\Gamma}$-geodesic, but is in general not a $\mathcal G$-geodesic.
\begin{lemma}
\label{consequence 1}
Let $\mathcal G = (\Gamma,\{G_e\},\{G_v\},\{\imath_e\})$ be a graph of groups with loose polynomial distortion. Then there is a polynomial $Q$ such that, for any $g \in \pi_1(\mathcal G)$ the following inequality is satisfied:
$$L_\Gamma(\widehat{g}) \leq Q(L_{\mathcal G}(g)),$$
where $\widehat{g}\in{\mathcal G}_{\Gamma}$ is the element of the normal $\Gamma$-set representing $g$.
\end{lemma}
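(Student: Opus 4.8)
The plan is to trace through the construction of $\widehat{g}$ out of the $\mathcal{G}$-geodesic $\widetilde{g}$ representing $g$, bounding the length contributed at each reduction. Recall that $\widetilde{g}$ satisfies $|\widetilde{g}|_{\Gamma}=L_{\mathcal G}(g)$, so its edge-length and the $G_v$-length of each of its vertex-elements are all at most $L_{\mathcal G}(g)$, and that $\widehat{g}$ is obtained from $\widetilde{g}$ by finitely many steps, each replacing a subword $(a,e,\imath_e(h),\e,a')$ by a geodesic word for $a\,\imath_{\e}(h)\,a'\in G_{i(e)}$. Every such step lowers the edge-length by $2$, so there are at most $L_{\mathcal G}(g)/2$ of them and the edge-length $k$ of $\widehat g$ is at most $L_{\mathcal G}(g)$. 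Since $L_{\Gamma}(\widehat g)$ is $k$ plus the sum of the lengths of its vertex-elements, and there are at most $L_{\mathcal G}(g)+1$ of those, it suffices to bound each vertex-element of $\widehat g$ by a fixed polynomial in $L_{\mathcal G}(g)$.

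First I would set up the bookkeeping of the reductions: each step merges two vertex-elements into one while inserting a single new factor $\imath_{\e}(h)$. Unwinding all the steps (re-geodesising does not affect the underlying group element), each vertex-element $w$ of $\widehat g$ is a product of at most $(L_{\mathcal G}(g)+1)+L_{\mathcal G}(g)/2$ factors, each of which is either a vertex-element of $\widetilde g$, of length $\le L_{\mathcal G}(g)$, or a ``crossing factor'' $\imath_{\e}(h)\in G_{i(e)}$ produced by some reduction. Everything therefore reduces to bounding one crossing factor.

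This is the main point, and the place to be careful. The naive estimate $L_{G_{i(e)}}(\imath_{\e}(h))\le P(2+L_{G_{t(e)}}(\imath_e(h)))$ coming straight from loose polynomial distortion is not enough, since for a reduction occurring late in the process $\imath_e(h)$ is itself a long product of earlier crossing factors, and iterating the estimate composes $P$ with itself once per level of nesting. Instead I would go back to $\widetilde g$ itself. The two occurrences of $e$ and $\e$ involved in the reduction are already present in $\widetilde g$, because reductions only delete matched pairs and the one using this $e$ cannot fire before everything between its $e$ and $\e$ has collapsed; the portion $\beta$ of $\widetilde g$ lying between them is a $\mathcal G$-loop based at $t(e)$ which the intervening reductions turn into $\imath_e(h)$. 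Hence $\sigma:=(1,e,\beta,\e,1)$ is a $\mathcal G$-loop with $[\sigma]=\imath_{\e}(h)\in G_{i(e)}$, and, being a contiguous sub-$\mathcal G$-sequence of $\widetilde g$ up to trivialising its two end vertex-elements, it has $L_{\Gamma}(\sigma)=2+|\beta|_{\Gamma}\le|\widetilde g|_{\Gamma}=L_{\mathcal G}(g)$. Loose polynomial distortion (Definition~\ref{loosepd}) applied to $\sigma$ then gives $L_{G_{i(e)}}(\imath_{\e}(h))\le P(L_{\mathcal G}(g))$, uniformly over all the reductions.

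It remains to assemble the estimates. Enlarging $P$ as allowed by Remark~\ref{croissant} so that $P(x)\ge x$, each of the at most $\tfrac{3}{2}L_{\mathcal G}(g)+1$ factors of a vertex-element $w$ of $\widehat g$ has length at most $P(L_{\mathcal G}(g))$, hence $L_{G_v}(w)\le\bigl(\tfrac{3}{2}L_{\mathcal G}(g)+1\bigr)P(L_{\mathcal G}(g))$, and summing over the at most $L_{\mathcal G}(g)+1$ vertices gives $L_{\Gamma}(\widehat g)\le Q(L_{\mathcal G}(g))$ with $Q(x)=x+(x+1)\bigl(\tfrac{3}{2}x+1\bigr)P(x)$, a polynomial. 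The only genuine obstacle is the one dealt with in the third paragraph: resisting the urge to iterate the distortion inequality along the reductions, and instead realising each crossing factor as the value of a single honest $\mathcal G$-loop sitting inside $\widetilde g$ whose $\Gamma$-length is already controlled by $L_{\mathcal G}(g)$.
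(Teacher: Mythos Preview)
Your argument is correct and rests on the same key observation as the paper's: each piece produced while reducing $\widetilde g$ is represented by a contiguous sub-$\mathcal G$-loop of $\widetilde g$, so loose polynomial distortion can be applied \emph{once} with argument already bounded by $L_{\mathcal G}(g)$, and no iteration of $P$ is needed. You carry this out at the level of individual crossing factors.

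The paper organises the same idea more economically. Instead of factoring each vertex-element $w$ of $\widehat g$ and bounding the factors one by one, it isolates at the outset a disjoint family $r_1,\dots,r_l$ of maximal sub-$\mathcal G$-loops of $\widetilde g$ (with complement $R^c$) whose reductions yield the vertex-elements of $\widehat g$; then $|\widehat g|_\Gamma\le\sum_i P(|r_i|_\Gamma)+|R^c|_\Gamma\le P\bigl(\sum_i|r_i|_\Gamma\bigr)+|R^c|_\Gamma$ by Remark~\ref{croissant}, which gives $Q$ of the same degree as $P$. Your route produces $Q$ of degree $\deg P+2$; this is immaterial for Rapid Decay, and your step-by-step account has the virtue of making explicit why the naive iterative estimate fails. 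One minor point: your count of ``at most $(L_{\mathcal G}(g)+1)+L_{\mathcal G}(g)/2$ factors'' in a single $w$ is really a bound on the total number of factors across all vertex-elements of $\widehat g$, but since that total also bounds each one, the estimate goes through.
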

\begin{proof}
By definition of a normal $\mathcal G$-set, for each $g \in \pi_1(\mathcal G)$ one has $L_{\mathcal G}(g) = |\widetilde{g}|_\Gamma$. The normal $\mathcal G$-path $\widetilde{g}$ admits a (non-unique) collection of subpaths $r_1,\cdots,r_l$ with the following properties:
\begin{itemize}
  \item each $r_i$ can be reduced to an element in a vertex-group, 
  \item if $i \neq j$, either $r_i \cap r_j$ is empty or consists of a single vertex,
  \item after applying all the necessary reductions to each one of the $r_i$'s, one gets a reduced $\mathcal G$-path.
\end{itemize}
Let us denote by $R^c=\widetilde{g}\setminus\cup r_i$, namely the union of all the subpaths in the complement in $\widetilde{g}$ of the union of the $r_i$'s in such a collection, which allows the following decomposition:
$$|\widetilde{g}|_\Gamma = \sum^l_{i=1} |r_i|_\Gamma + |R^c|_\Gamma.$$
Since $\mathcal G$ has loose polynomial distortion, after applying all the reductions at the $r_i$'s, one gets a reduced $\mathcal G$-path $g_0$ such that:
$$|g_0|_\Gamma \leq \sum^l_{i=1} P(|r_i|_\Gamma) + |R^c|_\Gamma.$$
where $P$ is the polynomial in Definition \ref{loosepd}. Since the $|r_i|_\Gamma$ are positive integers and by Remark \ref{croissant}, $$\sum^l_{i=1} P(|r_i|_\Gamma) \leq P(\sum^l_{i=1} |r_i|_\Gamma).$$  Of course $\sum^l_{i=1} |r_i|_\Gamma \leq |\widetilde{g}|_\Gamma$ so that we get:
$$|g_0|_\Gamma \leq  P(|\widetilde{g}|_\Gamma) =P(L_{\mathcal G}(g)).$$
The proof above holds for any choice of a collection of subpaths $r_i$'s. By definition, the element $\widehat{g}$ in the chosen normal $\Gamma$-set corresponds to a particular choice of such a collection hence the conclusion since $L_{\Gamma}(\widehat{g})=|g_0|_{\Gamma}$.
\end{proof}
\subsection{The Rapid Decay property}\label{RD}
We refer to \cite{ChatterjiIntro} or \cite{indirathese} for an introduction to the Rapid Decay property. We just give here a rough account of the material that we need. For $A = \mc, \mr^+$, the notation $AG$ denotes the set of finitely supported $A$-valued functions on the group $G$. The support of a function $f$ is denoted by supp$(f)$. The {\em convolution} of two functions $f,g \in AG$ is defined by 
$$(f * g)(\gamma) = \sum_{\mu \in G} f(\mu) g(\mu^{-1}\gamma),$$
which corresponds to the standard ring operation on $AG$. A group homomorphism $\varphi:G\to H$ induces a map 
$$A\varphi:AH\to AG, f\mapsto f\circ\varphi$$
by precomposition, that in general doesn't preserve any norm on $AG$ or $AH$, like the $\ell^2$-norm given by $||f||_2 =\sqrt{\sum_{\gamma \in G} |f(\gamma)|^2}$ for $f\in AG$.
\begin{definition}\label{defRD}
A group $G$ {\em has the Rapid Decay property with respect to a length function
$L$} if there exists a polynomial $P$ such that, for
any $r > 0$, for any $f \in \mr^+G$ with supp$(f) \subseteq B_L(r)$, and for any $g \in \mr^+G$
$$||f*g||_2 \leq P(r) ||f||_2 ||g||_2.$$
\end{definition}
\begin{remark}\label{obstruction}
Taking the supremum on both sides of the above inequality over all $g\in\mr^+$ on the ball of radius 1 for the $\ell^2$-norm gives the following equivalent condition:
$$||f||_* \leq P(r) ||f||_2$$
where now $\|f\|_*$ is the {\em operator norm} of $f$ acting on $\ell^2(G)$ via the left multiplication in $AG$. Leptin's characterization of amenability says that if $f$ is supported on an amenable subgroup of $G$, then $\|f\|_*=\|f\|_1$ and so, if $f$ is the characteristic function of a ball of radius $r$ on an amenable subgroup, and in particular a cyclic one, that prevents exponential distortion.
\end{remark}
\begin{remark}\label{tauto}Let $\mathcal G = (\Gamma,\{G_e\},\{G_v\},\{\imath_e\})$ be a graph of groups together with a normal $\Gamma$-set. For $f \in \mr^+ \pi_1(\mathcal G)$, let us define $\widehat{f} \in \mr^+ {\mathcal G}_\Gamma$ by $\widehat{f}(\widehat{g}) = f(g)$ and $\widehat{f}(w) = 0$ for any element $w \in {\mathcal G}_\Gamma$ not defined by an element in the normal $\Gamma$-set.  If $f \in\mr^+\pi_1(\mathcal G)$ then $\widehat{f} \in \mr^+{\mathcal G}_\Gamma$ with $||\widehat{f}||_{2} = ||f||_2$. Moreover, if $\mathcal G$ has loose polynomial distortion (and so also if $\mathcal G$ has at most polynomial distortion by Remark \ref{apdlpd}) according to Lemma \ref{consequence 1} there is some polynomial $Q$ such that for any $f \in \mr^+ \pi_1(\mathcal G)$ with supp$(f) \subseteq B_{L_{\mathcal G}}(r)$, then supp$(\widehat{f}) \subseteq B_{L_\Gamma}(Q(r))$.
\end{remark}
The following lemma provides an isometric map $\mr^+ \pi_1(\mathcal G)\to\mr^+ {\mathcal G}_\Gamma$ preserving the $\ell^2$-norms of products.  %
\begin{lemma}
\label{magic}
Let $\mathcal G = (\Gamma,\{G_e\},\{G_v\},\{\imath_e\})$ be a graph of groups with at most polynomial distortion. For any functions $f, g \in \mr^+ \pi_1(\mathcal G)$ with supp$(f) \subseteq B_{L_{\mathcal G}}(r)$ and $g \in\ell^2(\pi_1(\mathcal G))$, there exist a polynomial $P$ and functions $F, G \in \mr^+ {\mathcal G}_\Gamma$ with supp$(F) \subseteq B_{L_{\Gamma}}(P(r))$ and $G \in\mr^+{\mathcal G}_\Gamma$ such that $||f * g||_2 = ||F * G||_2$, $||f||_2 = ||F||_2$ and $||g||_2 = ||G||_2$.
\end{lemma}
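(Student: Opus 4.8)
The point of passing to the free product $\mathcal G_\Gamma$ is that there the vertex-group generators belong to the standard generating set, so no length is distorted, and the Rapid Decay property is accessible (this is precisely what makes Proposition \ref{distortion} follow, via Jolissaint's stability of Rapid Decay under free products, $\cite{Jolissaint}$). So the plan is: from $f,g$ build $F,G\in\mr^+\mathcal G_\Gamma$ with $||F||_2=||f||_2$, $||G||_2=||g||_2$, supp$(F)\subseteq B_{L_\Gamma}(P(r))$, and $||F*G||_2=||f*g||_2$; then a Rapid Decay estimate for $\mathcal G_\Gamma$ transfers verbatim to $\pi_1(\mathcal G)$.

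The first half is immediate: I take $F:=\widehat f$, the transport of $f$ along a chosen normal $\Gamma$-set as in Remark \ref{tauto}. Then $||F||_2=||f||_2$ tautologically, and supp$(F)\subseteq B_{L_\Gamma}(Q(r))$ by Lemma \ref{consequence 1}, so one may take $P:=Q$; this is the one place where the hypothesis of at most polynomial distortion enters. The naive choice $G:=\widehat g$ does not suffice: $\gamma\mapsto\widehat\gamma$ is only a set-theoretic section of the quotient $q\colon\mathcal G_\Gamma\twoheadrightarrow\pi_1(\mathcal G)$, not a homomorphism, so in general $\widehat\mu\,\widehat\nu\ne\widehat{\mu\nu}$ in $\mathcal G_\Gamma$. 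The pairs $(\mu,\nu)$ contributing to a fixed value $(f*g)(\gamma)$ then scatter among several elements of $\mathcal G_\Gamma$, and since all functions are non-negative this yields only $||\widehat f*\widehat g||_2\le||f*g||_2$, the inequality in the wrong direction for the application.

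So the core step is to choose $G$ (allowed to depend on $f$) so that this scattering does not occur. Here supp$(f)$ is finite and of $L_{\mathcal G}$-diameter $\le 2r$, so only finitely many left translations $x\mapsto\widehat\mu x$ ($\mu\in$ supp$(f)$) and finitely many coincidences $\mu\nu=\mu'\nu'$ (with $\mu,\mu'\in$ supp$(f)$, $\nu,\nu'\in$ supp$(g)$) have to be reconciled. The plan is to select, for each relevant coset $q^{-1}(\nu)$, a representative $y_\nu$ such that $\widehat\mu\,y_\nu$ is, for every $\mu\in$ supp$(f)$ \emph{simultaneously}, the representative chosen for $\mu\nu$; then $G:=\sum_\nu g(\nu)\,\delta_{y_\nu}$ has $||G||_2=||g||_2$, and the map $(\mu,\nu)\mapsto\widehat\mu\,y_\nu$ has exactly the same fibres over $\mathcal G_\Gamma$ as $(\mu,\nu)\mapsto\mu\nu$ has over $\pi_1(\mathcal G)$, which forces $||F*G||_2=||f*g||_2$. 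The main obstacle is the existence of such a family $\{y_\nu\}$: propagating the forced equalities $\widehat\mu\,y_\nu=\widehat{\mu'}\,y_{\nu'}$ around cycles of coincidences meets products of the shape $\widehat{\mu_k'}^{-1}\widehat{\mu_k}\cdots\widehat{\mu_1'}^{-1}\widehat{\mu_1}$, which always lie in $\ker q$ and must be shown to vanish (or be absorbed). I would control these using the free-product geometry — a reduced $\mathcal G$-path is a $\mathcal G_\Gamma$-geodesic, and the cancellation in a product of two of them is confined to a middle block of length $\le P(r)$ — together with the explicit inductive construction of the normal $\Gamma$-set from the normal $\mathcal G$-set by iterated reductions; if that were not enough on its own, the fallback would be to spread the weight of $G$ along $\ker q$-cosets or to argue by a direct Haagerup-type estimate on the Bass--Serre tree. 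Everything else — the support bound for $F$, the isometry $||G||_2=||g||_2$, and the fibre count — is routine once $\{y_\nu\}$ is in hand.
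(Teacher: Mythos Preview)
Your proposal and the paper's proof agree on taking $F=\widehat f$ but diverge on the construction of $G$, and this is exactly where your argument stalls. You insist on choosing a single lift $y_\nu\in q^{-1}(\nu)$ per $\nu$ such that the products $\widehat\mu\,y_\nu$ are consistent across all coincidences $\mu\nu=\mu'\nu'$; you correctly isolate the resulting cycle condition (products of the form $\widehat{\mu_k'}^{-1}\widehat{\mu_k}\cdots\widehat{\mu_1'}^{-1}\widehat{\mu_1}\in\ker q$ must vanish) and then leave it unverified, offering only heuristics and a fallback. As written the proof is therefore incomplete, and the cycle condition is not obviously true in a general graph of groups.

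The paper sidesteps this obstacle by a different choice of support for $G$. Rather than one lift per $\nu$, it sets $G(\widehat v^{\,-1}\widehat u):=g(v^{-1}u)$ for $v\in\mathrm{supp}(f)$ and $u\in\pi_1(\mathcal G)$, extended by zero. Well-definedness needs only that $\widehat v^{\,-1}\widehat u=\widehat{v'}^{\,-1}\widehat{u'}$ in $\mathcal G_\Gamma$ force $v^{-1}u={v'}^{-1}u'$ in $\pi_1(\mathcal G)$, which is immediate because the quotient $q$ is a homomorphism. One then reads off
\[
(F*G)(\widehat u)=\sum_{v\in\mathrm{supp}(f)} f(v)\,G(\widehat v^{\,-1}\widehat u)=\sum_{v} f(v)\,g(v^{-1}u)=(f*g)(u)
\]
for every $u$, with no cycles to close. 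In other words, the consistency problem you pose yourself is an artefact of demanding that $G$ be supported on a \emph{section} $\{y_\nu\}$ of $q$; the paper instead supports $G$ on the skew set $\{\widehat v^{\,-1}\widehat u : v\in\mathrm{supp}(f),\ u\in\pi_1(\mathcal G)\}$, and this dissolves your main obstacle in one line.
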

\begin{proof}
We equip $\mathcal G$ with normal $\mathcal G$- and $\Gamma$-sets. Let us fix $u \in \pi(\mathcal G)$. For each $v \in supp(f) \subseteq B_{L_{\mathcal G}}(r)$, we set $F(\widehat{v}) = \widehat{f}(\widehat{v})= f(v)$ and $G(\widehat{v}^{-1} \widehat{u}) = \widehat{g}(\widehat{v^{-1}u})=g(v^{-1} u)$.  Since $\widehat{v}^{-1} \widehat{u} = \widehat{v^\prime}^{-1} \widehat{u^\prime}$ in ${\mathcal G}_\Gamma$ implies $v^{-1} u = {v^\prime}^{-1} u^\prime$ in $\pi(\mathcal G)$, there is no obstruction in so defining $F$ and $G$ and extending them by zero on the elements of ${\mathcal G}_\Gamma$ where they are not been defined. Hence, for each  $u \in \pi_1(\mathcal G)$ we have two functions $F, G$ on the $\mathcal G$-free product such that $\widehat{f * g}(\widehat{u}) = (F*G)(\widehat{u})$ and the lemma follows from Remark \ref{tauto}.
\end{proof}
We are now ready to prove Proposition \ref{distortion} announced in the Introduction:
\begin{proof}[Proof of Proposition \ref{distortion}]Let $\mathcal G = (\Gamma,\{G_e\},\{G_v\},\{\imath_e\})$ be a graph of groups which has loose polynomial distortion.
Let $f \in \mr^+ \pi_1(\mathcal G)$ with supp$(f) \subseteq B_{L_{\mathcal G}}(r)$ for some $r > 0$ and let $g \in \mr^+ \pi(\mathcal G)$. Lemma \ref{magic} gives a polynomial $P$ and functions $F, G \in \mr^+ {\mathcal G}_\Gamma$ with supp$(F) \subseteq B_{L_{\Gamma}}(P(r))$ and $G \in\ell^2({\mathcal G}_\Gamma)$ such that $||f * g||_2 = ||F * G||_2$, $||f||_2 = ||F||_2$ and $||g||_2 = ||G||_2$. By \cite{Jolissaint}, the $\mathcal G$-free product ${\mathcal G}_\Gamma$ has Rapid Decay property. Hence there is a polynomial $Q$ such that $||F * G||_2 \leq Q(P(r)) ||F||_2 ||G||_2$. Since $||f * g||_2 = ||F * G||_2$, $||f||_2 = ||F||_2$ and $||g||_2 = ||G||_2$, we get the announced conclusion.
\end{proof}
\begin{corollary}
Let $\mathcal G = (\Gamma,\{G_e\},\{G_v\},\{\imath_e\})$ be a graph of groups such that $\Gamma$ is a finite tree, the edge-groups are cyclic and each vertex-group has Rapid Decay property. Then $\mathcal G$ has loose polynomial distortion, and hence $\pi_1({\mathcal G})$ has Rapid Decay property as well.
\end{corollary}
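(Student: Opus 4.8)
The plan is to establish loose polynomial distortion of $\mathcal G$ and then invoke Proposition \ref{distortion}. By Definition \ref{loosepd}, together with the fact that we may take the polynomial $P$ increasing (Remark \ref{croissant}) and that $L_\Gamma(s)\ge L_{\mathcal G}([s])$ for every $\mathcal G$-loop $s$, loose polynomial distortion of $\mathcal G$ is equivalent to the statement that each vertex-group is at most polynomially distorted in $\pi_1(\mathcal G)$. I would prove this by induction on the (finite) number of edges of $\Gamma$. If $\Gamma$ has no edge there is nothing to prove; otherwise pick a leaf edge $e$ with leaf vertex $w$, and let $\mathcal G'$ be the tree of groups obtained by deleting $w$ and $e$, so that $\pi_1(\mathcal G)=\pi_1(\mathcal G')\ast_{G_e}G_w$. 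By the inductive hypothesis $\mathcal G'$ has loose polynomial distortion, so $\pi_1(\mathcal G')$ has the Rapid Decay property by Proposition \ref{distortion}; thus in the amalgam $\pi_1(\mathcal G)=H\ast_C K$ with $H=\pi_1(\mathcal G')$, $K=G_w$, $C=G_e$, both factors have Rapid Decay and $C$ is cyclic. Since distortion functions compose polynomially, it then suffices to show that $H$ and $K$ are at most polynomially distorted in $H\ast_C K$: for $v\neq w$ one combines this with the inductive bound for $G_v$ inside $H$, and for $v=w$ one has $G_w=K$ directly.

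The only property of $H$ and $K$ that I would use is the following consequence of Remark \ref{obstruction}: a cyclic subgroup of a group with Rapid Decay is at most polynomially distorted. Indeed, if $\langle\gamma\rangle\le H$ and $\gamma^n$ has $H$-length at most $r$, then the characteristic function $f$ of $\{\gamma^j:|j|\le n\}$ is supported in $B_{L_H}(r)$; being supported on the amenable subgroup $\langle\gamma\rangle$, Leptin's identity gives $\|f\|_*=\|f\|_1=2n+1$, and the inequality of Remark \ref{obstruction} forces $2n+1\le P(r)\|f\|_2=P(r)\sqrt{2n+1}$, i.e.\ $n\le P(r)^2/2$. Applying this to the two images of $C$, the $c$-exponent of a power of $c=$ (generator of $C$) and its length in $H$, and in $K$, are tied together by polynomials depending only on $\mathcal G$.

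Next I would fix $h\in H$ and bound $L_H(h)$ in terms of $L_{\pi_1(\mathcal G)}(h)$. By Remark \ref{defpratique} it is enough to treat $s=pq^{-1}$, where $p,q$ are reduced $\mathcal G$-sequences of some edge-length $k$ starting at the $H$-vertex, with common terminal vertex, and $[pq^{-1}]=h$. Since $\Gamma$ is a single edge here, $p$ and $q$ run along the edge-path $e,\e,e,\e,\dots$; as $p,q$ are internally reduced, $pq^{-1}$ can only be reducible at its junction, and since $[pq^{-1}]\in H$ has a reduced representative of edge-length $0$ it must be reducible there. Iterating this forces $pq^{-1}$ to collapse to $(h)$ in exactly $k$ reductions, ``unzipping'' from the middle outward; at the $i$-th step the eliminated syllable is a power $c^{a_{i-1}}$ of $c$ (inside $K$ if $i$ is odd, inside $H$ if $i$ is even, say), the reduction merges its two flanking vertex-syllables with the corresponding power of $c$ on the other side to form $\mu_i$, and $\mu_i=c^{a_i}$ for $i<k$ while $\mu_k=h$. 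Writing $\Lambda_H(m)=L_H(c^m)$ and $\Lambda_K(m)=L_K(c^m)$, the merge at an $H$-step gives $\Lambda_H(a_i)\le\Lambda_H(a_{i-1})+\tau_i$ (same function on both sides, since both elements are powers of the same $c$ inside $H$), and similarly with $\Lambda_K$ at a $K$-step, where $\tau_i$ is the total $L_\Gamma$-length of the two flanking syllables; these are precisely the syllables of $s$, so $\sum_i\tau_i\le L_\Gamma(s)$, and also $k\le L_\Gamma(s)$.

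The hard part is to iterate these one-step inequalities without losing polynomiality: passing from $\Lambda_H$ to $\Lambda_K$ and back means re-expressing a power of $c$ on the other side of the amalgam, and a priori the change of length is governed by the polynomial distortion of $\langle c\rangle$ in $H$ and in $K$, so composing $k$ of them crudely would be super-polynomial. This is exactly where I would use that $\Gamma$ is a tree: the path $e,\e,e,\e,\dots$ underlying $pq^{-1}$ backtracks completely, and entering a factor along an edge and leaving it along that same edge produces length-changes that, up to the additive contributions $\tau_i$, are mutually inverse; hence the multiplicative distortion accumulated over the whole unzip cancels. Converting the lengths $\Lambda_{H}(a_i),\Lambda_K(a_i)$ back to the exponents $a_i$ using the polynomial bounds of the second paragraph linearizes the recursion, so that $|a_k|$, and therefore $L_H(h)=L_H(\mu_k)$, is bounded by a polynomial in $\sum_i\tau_i+k\le 2L_\Gamma(s)$ with polynomial depending only on $\mathcal G$. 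This is loose polynomial distortion of $\mathcal G$, and Proposition \ref{distortion} then gives the Rapid Decay property of $\pi_1(\mathcal G)$. (For a general finite tree one can run the same loop-analysis directly rather than inducting: $p$ and $q$ still traverse one common edge-path, the unzip still removes one power of an edge-generator per step, and the cancellation of multiplicative distortion now leaves only the bounded amount carried by the finitely many edges of the $\Gamma$-geodesic between the two endpoints.)
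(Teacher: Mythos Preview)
Your approach tracks the paper's closely: establish loose polynomial distortion and then invoke Proposition~\ref{distortion}. The paper treats the single-edge case first and then the general tree by the same ``dilatation followed by contraction'' heuristic; you reorganize this as an induction on the number of edges, reducing to a single amalgam $H\ast_C K$, but the substance is identical. Your unzipping recursion, the one-step inequalities $\Lambda_H(a_i)\le\Lambda_H(a_{i-1})+\tau_i$ and $\Lambda_K(a_j)\le\Lambda_K(a_{j-1})+\tau_j$, and the use of Remark~\ref{obstruction} to get polynomial distortion of $\langle c\rangle$ in each factor, are precisely the ingredients the paper invokes, only made more explicit.

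Where you flag ``the hard part'' is exactly where both arguments are thin. The claim that converting to exponents ``linearizes the recursion'' is not correct as written: from $\Lambda_H(a_i)\le\Lambda_H(a_{i-1})+\tau_i$ together with $|n|\le Q_H(\Lambda_H(n))$ and $\Lambda_H(n)\le C_H|n|$ you only obtain $|a_i|\le Q_H(C_H|a_{i-1}|+\tau_i)$, and nesting $k$ such polynomial steps alternating $Q_H,Q_K$ gives a bound of degree $(\deg Q_H\cdot\deg Q_K)^{k/2}$, exponential in $k$. The cancellation heuristic---that entering and leaving a factor along the same edge are mutually inverse---is true for the crossing maps on exponents, but the additive increments $\tau_i$ sit \emph{between} these inverse conversions and do not commute with the length-to-exponent passage, so the multiplicative distortions do not automatically cancel. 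The paper's one line ``a dilatation is followed by a contraction. It follows that $\mathcal G$ has loose polynomial distortion'' glosses over the same point, so your argument is at the paper's level of rigor here; to actually close the gap one needs a sharper control on $|a_i-a_{i-1}|$ than the length inequality alone provides, i.e.\ more of the algebraic content of $c^{a_i}=g\,c^{a_{i-1}}(g')^{-1}$ in the vertex group.
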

\begin{proof}
To begin with, assume that $\Gamma$ consists of a single edge $e$, with $v=i(e)$ and $w=t(e)$ distinct. Without loss of generality (up to adding generators in the generating sets of the vertex groups), we may assume that the images under $\imath_e$ and $\imath_{\e}$ of the generator $z$ of the cyclic edge-group belong to the generating sets of $G_v$ and $G_w$. Let $x_1=\imath_e(z)$ be the image of that generator in $G_v$ and $y_1=\imath_{\e}(z)$ be the one in $G_w$. Then, since the morphisms $\imath_e$ are injective, there are integers $p, q$ such that $c_e(x^p_1)=y^q_1$, where $c_e$ is the crossing map for the edge $e$. The cyclic subgroups generated by $x_1$ and $y_1$ are at most polynomially distorted in their respective vertex groups as those have Rapid Decay property\footnote{Since $\mz$ is amenable, an exponentially distorted copy of $\mz$ is an obstruction to the Rapid Decay property, see Remark \ref{obstruction}.}. Hence there is an exponential contraction or dilatation if and only if $|p| \neq |q|$. Since the edge-groups are cyclic, such a distortion only occur at powers of $x_1$ or $y_1$, depending on the lift of vertex $v$ or $w$ considered. Since $v \neq w$, in any ${\mathcal T}_{\mathcal G}$-sequence, the lifts of $v$ and $w$ alternate. Hence a dilatation is followed by a contraction. It follows that $\mathcal G$ has loose polynomial distortion.

For the general case, there is more than a single edge in the tree $\Gamma$, but when passing to a lift of $\Gamma$ to another one in a ${\mathcal T}_{\mathcal G}$-sequence, the $\mathcal G$-sequence has the form $(\cdots, e,\e, \cdots)$ for some oriented edge $e$ of $\Gamma$. Hence a contraction is followed by a dilatation, and conversely, so that the graph of groups ${\mathcal G}$ has loose polynomial distortion.

With $\mathcal G$ having loose polynomial distortion, Proposition \ref{distortion} gives the conclusion since the vertex-groups have Rapid Decay property. 
\end{proof}
\section{Ensuring polynomial distortion}\label{PolDisto}
We introduce below a couple of conditions which together imply loose or at most polynomial distortion.
\begin{definition}
\label{defspd}
A graph of groups $\mathcal G$ {\em seemingly has at most polynomial distortion} if for some (and hence any) set of standard length functions there exists a polynomial $P$ such that for any vertex-group $G_v$ in ${\mathcal G}$, for any $g \in G_v$, for any $n \in \mn$, for any reduced edge-path $p$ in $\Gamma$ with edge-length $n$ starting at $v$, for any $h \in G_{t(p)}$ such that $[php^{-1}]=g$, the following inequality holds:
$$L_\Gamma(g) \leq P(n) L_{\Gamma}(p h p^{-1})$$
We say that $\mathcal G$ is {\em seemingly undistorted} if the polynomial $P$ is a constant.
\end{definition}
And of course we have the following weaker notion.
\begin{definition}
\label{deflspd}
A graph of groups $\mathcal G$ {\em seemingly has loose polynomial distortion} if for some (and hence any) set of standard length functions there exists a polynomial $P$ such that for any vertex-group $G_v$ in ${\mathcal G}$, for any $g \in G_v$, for any reduced edge-path $p$ in $\Gamma$ starting at $v$, for any $h \in G_{t(p)}$ such that $[php^{-1}]=g$, the following inequality holds:
$$L_\Gamma(g) \leq P(L_{\Gamma}(p h p^{-1})).$$
\end{definition}
To seemingly have polynomial distortion is strictly weaker than actually having polynomial distortion, because being allowed $\mathcal G$-sequences is much more general than edge-paths only, as the following example shows. 
\begin{example}\label{SeemPolbutExpDisto}
Take $\Gamma$ the tree with two vertices $v, w$ and a single edge $e$, with $i(e)=v$ and $t(e)=w$, with $G_v=\left<x_1,x_2,x_3\,|\ x_2=x_3x_1x_3^{-1}\right>$ and $G_w=\left<y_1,y_2,y_3\,|\ y_1=y_3y_2y_3^{-1}\right>$ whereas $G_e= \FM_2=\left<a,b\right>$ with the following maps
\begin{eqnarray*}\imath_e(a)=y_1,&\ &\imath_e(b)=y_2^2,\\
\imath_{\e}(a)=x_1^2,&\ &\imath_{\e}(b)=x_2.\end{eqnarray*}
Hence ${\mathcal G}_\Gamma=G_v *G_w*\mz$ and let us see that the distortion is exponential, so more than polynomial. One has $x^{2^n}_1 = a y^{2^{n-1}}_1 a^{-1}$ and  $y^{2^{n-1}}_1 =  y_3 y^{2^{n-1}}_2 y^{-1}_3$ and then using the edge $y^{2^{n-1}}_2 =  a^{-1} x^{2^{n-2}}_2 a$ and $x^{2^{n-2}}_2 = x_3 x^{2^{n-2}}_1 x^{-1}_3$. Combining these last two relations gives $x^{2^n}_1 = a y_3 a^{-1} x_3 x^{2^{n-2}}_1 (a y_3 a^{-1} x_3)^{-1}$ hence an exponential distortion of $x^{2^n}_1$ by iterating the process since 
$$L_\Gamma(a y_3 a^{-1} x_3 x^{2^{n-2}}_1 (a y_3 a^{-1} x_3)^{-1}) =\frac{1}{4} L_\Gamma(x^{2^n}_1) + 8.$$
But it is seemingly polynomially distorted because the edge-paths that do the distortion, do it on at most one edge.\end{example} 
This example prompts the following definition.
\begin{definition}\label{well} Let $\mathcal G = (\Gamma,\{G_e\},\{G_v\},\{\imath_e\})$ be a graph of groups equipped with a set of standard length functions, and let ${\mathcal T}_{\mathcal G}= (\widetilde{\Gamma},\varphi)$ be its universal covering. 
If $p=e_1 \cdots e_l$ is an edge-path in $\widetilde{\Gamma}$ and $a \in \widetilde{G}_{i(p)}$, we say that the path $p$ is {\em well-defined at $a$} if the concatenation of the consecutive edge crossings 
$$\widetilde{c}_p(a):=(\widetilde{c}_{e_l} \circ \cdots \circ \widetilde{c}_{e_1})(a)$$
exists. If $e$ is an edge of $\widetilde{\Gamma}$ with $i(e)=t(p)$, we say that the path $p$ is {\it maximal at $a$ with respect to $e$} if $p$ is well-defined at $a$ but the path ${p\,e}$ is not.
\begin{figure}[htpb]
    \centering
        \includegraphics[width=\linewidth]{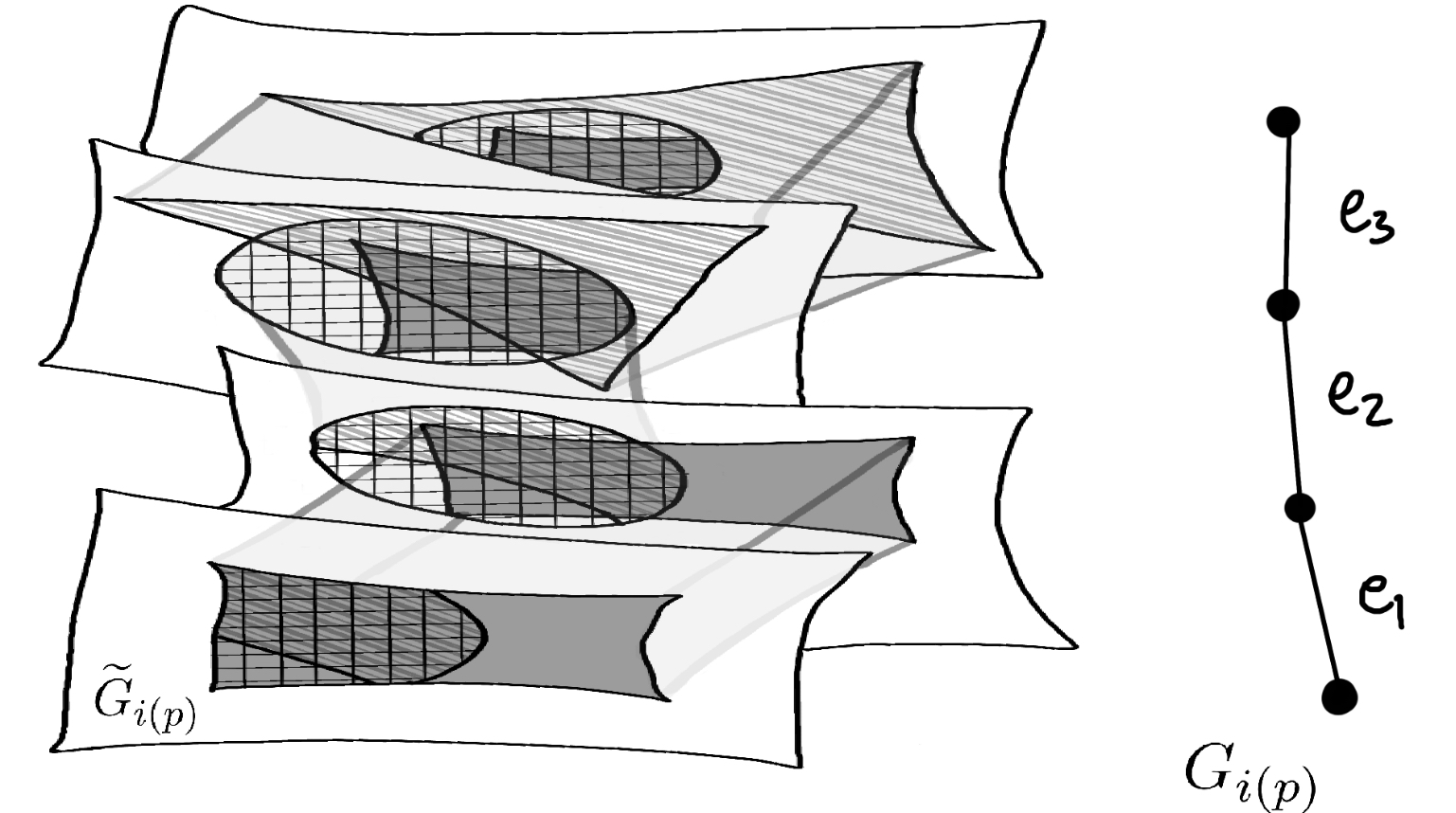}
    \caption{The path $p=e_1e_2e_3$ is well-defined at the triply shaded area of $\widetilde{G}_{i(p)}$. The path $p'=e_1e_2$ is well-defined at the grid and solid shaded area of $\widetilde{G}_{i(p)}$. The part without the line shading is maximal with respect to $e_3$.}
    \label{welldef}
    \end{figure}
\end{definition}
Example \ref{SeemPolbutExpDisto} is a particular case of the following lemma.
\begin{lemma}
\label{arbrespd}
Let $\mathcal G = (\Gamma,\{G_e\},\{G_v\},\{\imath_e\})$ be a graph of groups so that $\Gamma$ is a finite tree and so that, for any oriented edge $e$, the image of the corresponding crossing edge map distorts lengths at most polynomially, namely
$$L_{G_{i(e)}}(g)\leq P(L_{G_{t(e)}}(c_e(g)),$$
for any $g\in\imath_{\e}(G_e)<G_{i(e)}$. Then $\mathcal G$ seemingly has loose polynomial distortion. If moreover the crossing edge maps are quasi-isometric embeddings, then $\mathcal G$ is seemingly undistorted.
\end{lemma}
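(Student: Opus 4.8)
The plan is to reduce the statement about arbitrary $\mathcal{G}$-sequences to the controlled situation of \emph{well-defined} edge-paths in the Bass--Serre tree, using the combinatorics of a \emph{finite tree} $\Gamma$ to show that distortions cannot accumulate along such a path. Concretely, fix a vertex-group $G_v$, an element $g \in G_v$, a reduced edge-path $p = e_1 \cdots e_n$ in $\Gamma$ starting at $v$, and $h \in G_{t(p)}$ with $[php^{-1}] = g$. Lift $p$ to a reduced edge-path $\tilde p = \tilde e_1 \cdots \tilde e_n$ in $\widetilde{\Gamma}$ starting at the base-vertex-coset; the relation $[php^{-1}] = g$ forces, by the uniqueness of reduced $\mathcal G$-sequences (the Remark after Definition~\ref{gog}), that $h$ lies in the appropriate edge subgroup $\widetilde{\imath}_{\e_n}(G_{e_n})$, that $g$ lies in $\widetilde{\imath}_{\e_1}(G_{e_1}) < G_v$, and that $g = \widetilde{c}_{\tilde p}(h)$ — i.e. $\tilde p$ read backwards is well-defined at $h$ and carries $h$ to $g$ through the successive crossing edge maps.

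First I would establish the one-step estimate: each crossing edge map $\widetilde{c}_{\tilde e_i}$ distorts the standard length by at most a single application of the polynomial $P$ in the hypothesis, that is $L_{G_{i(e_i)}}(\widetilde{c}_{\bar{\tilde e}_i}(x)) \le P(L_{G_{t(e_i)}}(x))$ for $x$ in the relevant edge subgroup (for the seemingly-undistorted case, replace $P$ by an affine function, as the crossing map is a quasi-isometric embedding). Iterating naively over $n$ steps would give $P^{\circ n}$, which is not polynomial in $n$; this is where the finite-tree hypothesis must be exploited. The key observation is that in a finite tree there is a bound $\kappa$ on the diameter, and since $\tilde p$ is reduced, $n \le \kappa$ is \emph{not} available — $\tilde p$ can be arbitrarily long. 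So instead I would use the following: along a reduced path in $\widetilde{\Gamma}$ the projection to $\Gamma$ is a reduced path of length $n$, but since $\Gamma$ is a finite tree, a reduced path in $\Gamma$ has length at most $\mathrm{diam}(\Gamma)$. A reduced edge-path in a tree cannot backtrack, hence its projection to the finite tree $\Gamma$ is itself reduced, so $n \le \mathrm{diam}(\Gamma) =: \kappa$, a constant. Therefore the iteration runs for at most $\kappa$ steps, $P^{\circ \kappa}$ is again a polynomial (of degree $(\deg P)^\kappa$), and the composite of the $\kappa$ crossing maps distorts length by at most $P^{\circ \kappa}$, giving $L_\Gamma(g) = L_{G_v}(g) \le P^{\circ\kappa}(L_{G_{t(p)}}(h)) \le P^{\circ\kappa}(L_\Gamma(php^{-1}))$, which is exactly seemingly loose polynomial distortion (Definition~\ref{deflspd}); in the quasi-isometric case $P^{\circ\kappa}$ is affine and one gets seeming undistortion.

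The main obstacle — and the point that needs care — is precisely the claim that $n \le \mathrm{diam}(\Gamma)$: one must verify that the reduced $\mathcal G$-path $p$ produced from an arbitrary word equal to $g$ does project to a reduced, hence bounded-length, path in the finite tree $\Gamma$, and that the definitions of ``seemingly has (loose) polynomial distortion'' quantify only over such reduced edge-paths $p$ in $\Gamma$ (which they do, by Definitions~\ref{defspd} and~\ref{deflspd}). I would also need to check the bookkeeping that the element obtained by conjugating $h$ back along $p$ through the crossing maps is genuinely $g$ and lives in $G_v$ with its standard length controlled as claimed — this uses the compatibility $\widetilde c_e \circ \widetilde c_{\e} = \mathrm{id}$ and the fact that the $\Gamma$-length restricted to a vertex-group is the standard length of that vertex-group. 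Finally, a small point: one should feed $P$ through Remark~\ref{croissant} so that it is increasing and super-additive, which makes $P^{\circ\kappa}$ well-behaved and lets the single-step bounds compose cleanly; then the conclusion follows, and Example~\ref{SeemPolbutExpDisto} is recovered by taking $\kappa = 1$.
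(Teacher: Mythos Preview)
Your approach is correct and essentially identical to the paper's: both arguments hinge on the observation that a reduced edge-path in the finite tree $\Gamma$ has length at most $N = \mathrm{diam}(\Gamma)$, so composing the crossing-edge estimates at most $N$ times yields a fixed polynomial $P^{\circ N}$, and then $L_\Gamma(g) \le P^{\circ N}(L_\Gamma(h)) \le P^{\circ N}(L_\Gamma(php^{-1}))$.

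One caveat worth flagging: your detour through $\widetilde{\Gamma}$ is unnecessary and inserts a false intermediate claim. The assertion that ``a reduced edge-path in $\widetilde{\Gamma}$ projects to a reduced path in $\Gamma$'' is not true in general --- take $\Gamma$ a single edge with nontrivial vertex groups, and a length-two reduced path in the Bass--Serre tree of the form $gG_v \to ghG_w \to ghh'G_v$ with $h' \notin \imath_e(G_e)$; this projects to $e\bar e$, which is not reduced. Fortunately you never actually need this: Definition~\ref{deflspd} already hands you $p$ as a reduced edge-path \emph{in $\Gamma$}, so $n \le \mathrm{diam}(\Gamma)$ is immediate and the whole passage about $\tilde p$ being ``arbitrarily long'' and then projecting back down can be deleted. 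The paper's proof makes exactly this direct move.
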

\begin{proof} Take $g\in G_v$ an element in some edge-group, and assume that $g=[php^{-1}]$ where $p$ is an edge-path in $\Gamma$ and $h\in G_{w}$ some other vertex-group. Since for any path $q$ the crossing maps satisfy $\tilde{c}_q\circ\tilde{c}_{q^{-1}}=Id$ and our path $p$ is purely vertical, we can assume it doesn't have any backtracking so has length at most $N$, the diameter of the tree $\Gamma$.

We now work in the universal covering of $\mathcal G$, and so consider two elements $u_0$ and $v_0$ in a lift of $i(p_1)$ such that $v^{-1}_0 u_0 = g$. The two elements $u = \widetilde{c}_{p}(u_0)$ and $v = \widetilde{c}_{p}(v_0)$ such that $v^{-1} u = h$ satisfy
$$L_\Gamma(v^{-1} u) \leq P^N(L_\Gamma(v^{-1}_0 u_0))$$ 
where $P$ is the maximal distortion polynomial for the crossing edge maps. That is $$L_\Gamma(h) \leq P^N(L_\Gamma(g)).$$ Since $g=[php^{-1}] \Leftrightarrow h=[p^{-1}gp]$ we also have $$L_\Gamma(g) \leq P^N(L_\Gamma(h)).$$ Since $P$ is increasing over $\mr^+$ (see Remark \ref{croissant}) and $L_\Gamma(h) \leq L_\Gamma(php^{-1})=2 L_\Gamma(p) + L_\Gamma(h)$ we eventually get $$L_\Gamma(g) \leq P^N(L_\Gamma(php^{-1}))$$ and seemingly loose polynomial distortion is proved. The last assertion of the lemma comes from the fact that a composition of quasi-isometric embeddings is a quasi-isometric embedding.
\end{proof}
 If arbitrarily long paths are only defined on finite sets and all the crossing edges maps are polynomial, it implies seemingly polynomial distortion. However, all the distortion could be made on a single edge, as shown in the following example.
\begin{example}\label{oneedge}
Consider the graph of groups with one single edge $e$ and two vertices $v$ and $w$, with $G_v=\mz^2=\left<a,b|[a,b]=1\right>$ and $G_w=BS(1,2)=\left<x,y|yxy^{-1}=x^2\right>$ and $G_e=\mz=\left<t\right>$. The embeddings are given by $\imath_{e}(t)=x$ and $\imath_{\e}(t)=a$. Then the crossing map distorts lengths exponentially.
\end{example}
The property below, about the crossing edge maps in the universal covering of the graph of groups, will be used to treat graph manifolds.
\begin{definition}
\label{ggtd}
Let $\mathcal G = (\Gamma,\{G_e\},\{G_v\},\{\imath_e\})$ be a graph of groups equipped with a set of standard length functions, and let ${\mathcal T}_{\mathcal G}= (\widetilde{\Gamma},\varphi)$ be its universal covering. We say that $\mathcal G$ {\em has tight dynamics} if there exist constants $K \geq 0$ and $C \geq 1$ such that for any oriented edges $\tilde{e},\tilde{f}$ of $\widetilde{\Gamma}$ with $t(\tilde{e})=i(\tilde{f})$, for any elements $a,b$ in $\tilde{\imath}_{\bar{e}}(G_e)$ such that:
\begin{itemize}
\item $L_\Gamma(b^{-1}a) \geq K$ 
and
\item $\tilde{e}$ is maximal at $a$ or $b$ or both with respect to $\tilde{f}$
\end{itemize}
then
$$L_\Gamma(a^{-1}_{\tilde{f}} \tilde{c}_{\tilde{e}}(a)) + L_\Gamma(b^{-1}_{\tilde{f}} \tilde{c}_{\tilde{e}}(b)) \geq \frac{1}{C} \left(L_\Gamma(b^{-1}a)-L_\Gamma(b^{-1}_{\tilde{f}} a_{\tilde{f}})\right).$$
for $a_{\tilde{f}}$ and $b_{\tilde{f}}$ any closest elements respectively to $\tilde{c}_{\tilde{e}}(a)$ and $\tilde{c}_{\tilde{e}}(b)$ in $\widetilde{\imath}_{{\overline{f}}}(G_f)$.
\begin{figure}[htpb]
    \centering
        \includegraphics[width=\linewidth]{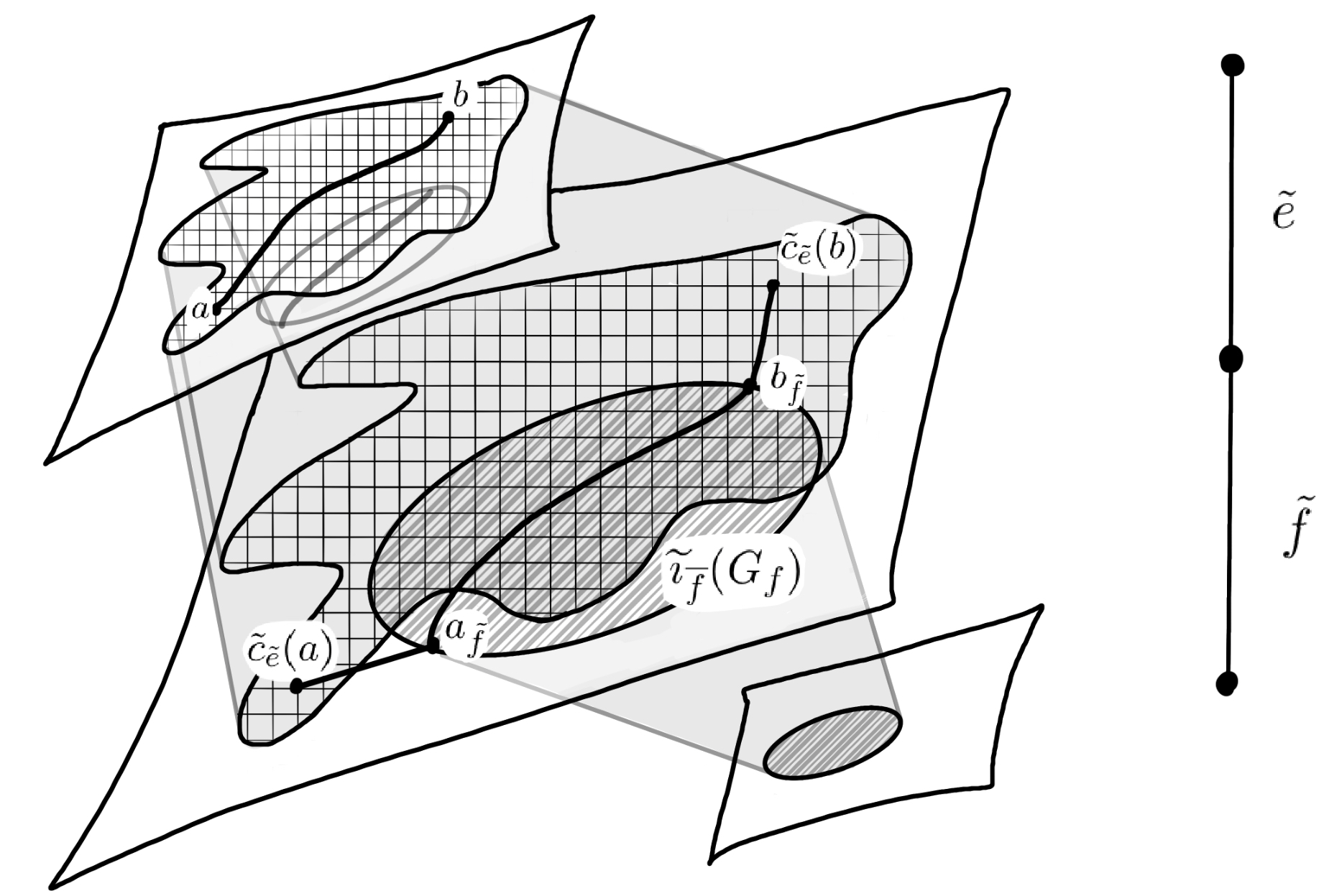}
    \caption{Tight dynamics: The distance between $a_{\tilde{f}}$ and $\tilde{c}_{\tilde{e}}(a)$ plus the one between $b_{\tilde{f}}$ and  $\tilde{c}_{\tilde{e}}(b)$ has to be larger than a fixed fraction of the difference of the distances between $a$ and $b$ with the one between $a_{\tilde{f}}$ and $b_{\tilde{f}}$.}
    \label{tight}
    \end{figure}
\end{definition}
Notice that if $L_\Gamma(b^{-1}a) \leq L_\Gamma(b^{-1}_{\tilde{f}} a_{\tilde{f}})$, then the above inequality is automatically satisfied.
The following lemma justifies the introduction of these notions.
\begin{lemma}
\label{spdtdpd}
A graph of groups with tight dynamics and seemingly (loose) polynomial distortion has (loose) polynomial distortion (with same degree). 
\end{lemma}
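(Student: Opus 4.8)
The goal is to upgrade the "seemingly" versions of polynomial distortion (Definitions \ref{defspd} and \ref{deflspd}, which only control distortion along genuine edge-paths $p$ of the graph $\Gamma$) to the actual versions (Definitions \ref{defpd} and \ref{loosepd}, which must control distortion along arbitrary $\mathcal G$-loops $s$), using the tight-dynamics hypothesis as the extra ingredient. By Remark \ref{defpratique} it suffices to consider a $\mathcal G$-loop of the form $s = p q^{-1}$, where $p$ and $q$ are reduced $\mathcal G$-sequences of equal edge-length $k$ starting at a common vertex $v$, with the same endpoint, and $[pq^{-1}] = g \in G_v$; we must bound $L_{G_v}(g) = L_\Gamma(g)$ by $P(k)\,L_\Gamma(pq^{-1})$ (or, for the loose version, by $P(L_\Gamma(pq^{-1}))$). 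The plan is to pass to the universal covering ${\mathcal T}_{\mathcal G}$, where $p$ and $q$ lift to two reduced paths starting at a common vertex $\tilde v$, and to track what the crossing-edge maps do to a pair of elements representing $g$.

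**The core argument.** Lift the situation: pick $u_0, v_0$ in the lift $\tilde v G_v$ of the starting vertex with $v_0^{-1} u_0 = g$. Following the path $p$ in ${\mathcal T}_{\mathcal G}$ applies successive crossing maps; the key point is that $p$ and $q$ agree on an initial segment and then diverge (or one has length zero), and at the vertex where they first diverge we are comparing where $\tilde c_p(u_0)$ and $\tilde c_p(v_0)$ sit relative to the edge-subgroup along which the next edge of $q$ departs. I would run $p$ forward edge by edge, maintaining the pair $(a_j, b_j)$ of images under the partial crossing composition, and measure the "internal distance" $L_\Gamma(b_j^{-1} a_j)$. Tight dynamics (Definition \ref{ggtd}) says precisely that whenever the edge just traversed is maximal at $a_j$ or $b_j$ with respect to the next edge, any drop in the internal distance when we project onto the next edge-subgroup is paid for by a proportional gain in the $\Gamma$-length contributed by that vertex of $s$ (the terms $L_\Gamma(a_{\tilde f}^{-1}\tilde c_{\tilde e}(a)) + L_\Gamma(b_{\tilde f}^{-1}\tilde c_{\tilde e}(b))$ are literally distances realized inside vertex-groups along the loop $s$, hence bounded by $L_\Gamma(s)$). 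Summing these inequalities along the whole of $p$ and then symmetrically along $q$, with the constant $C$ and the threshold $K$ absorbed into constants (and the trivial case $L_\Gamma(b^{-1}a)\le L_\Gamma(b_{\tilde f}^{-1}a_{\tilde f})$ noted after Definition \ref{ggtd} handled separately), we obtain that the total internal distance at the far end differs from $L_\Gamma(g)$ by at most $C\,L_\Gamma(s)$ plus a bounded-per-edge error, i.e. by at most $(C k + K k)\,L_\Gamma(s)$ or so. At the far endpoint we are now in a situation controlled by seemingly (loose) polynomial distortion applied to the concatenated vertical path: the residual element there is $[\,p'\,h\,(p')^{-1}]$-type data for a genuine reduced edge-path, so Definition \ref{defspd} (resp. \ref{deflspd}) bounds $L_\Gamma(g)$ by $P(\text{edge-length})\cdot L_\Gamma(\cdot)$ (resp. $P(L_\Gamma(\cdot))$). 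Composing the two estimates gives a polynomial of the same degree, since we only multiplied by the linear-in-$k$ factors coming from tight dynamics.

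**Main obstacle and bookkeeping.** The genuine difficulty is the bookkeeping of which edges along $p$ (and $q$) are "maximal" — tight dynamics only gives its inequality at maximal edges, so I need to argue that at a \emph{non}-maximal edge the pair $(a_j,b_j)$ is carried forward with no loss of internal distance at all (this should follow from well-definedness: if $\tilde e$ is not maximal at both $a$ and $b$ with respect to $\tilde f$, the crossing extends and the relevant projection is an honest inclusion of edge-subgroups, so distances are preserved, or at worst I restrict the path $p$ to the maximal sub-path as in the proof of Lemma \ref{arbrespd}). Making this dichotomy precise — that every vertex of the lifted loop is either a maximal-edge vertex where tight dynamics applies, or a "transparent" vertex where the crossing map is injective on the relevant subset and costs nothing — is where the real work lies, and it is essentially a careful induction on edge-length $k$ feeding the inequality of Definition \ref{ggtd} into a telescoping sum. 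Once that is set up, the reduction to the "seemingly" hypothesis and the polynomial composition are routine, and the degree is preserved because all the tight-dynamics corrections are linear in $k$, which for the loose version we further bound using $k \le L_\Gamma(s)$ and Remark \ref{croissant}.
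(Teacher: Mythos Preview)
Your proposal is essentially the paper's approach: the paper makes your maximal/non-maximal dichotomy explicit by decomposing the common edge-path of the two reduced sequences into sub-paths $p_1,\ldots,p_l,p_{l+1}$ where each $p_i$ is maximal at the tracked pair $(a_i,b_i)$ with respect to the first edge of $p_{i+1}$, applies tight dynamics at each junction to telescope the drops in internal distance against the $\Gamma$-length of the rewritten loop ${\bf p}{\bf q}^{-1}$, and then invokes seemingly (loose) polynomial distortion both to compare $L_\Gamma({\bf p}{\bf q}^{-1})$ with $L_\Gamma(rs^{-1})$ and to handle the terminal well-defined segment $p_{l+1}$. One small correction to your write-up: since ${\mathcal T}_{\mathcal G}$ is a tree and $p,q$ are reduced with common endpoints, they follow the \emph{same} edge-path throughout and never ``diverge''---the two sequences differ only in their horizontal vertex-group elements, which is exactly what the pair $(a_j,b_j)$ records.
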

\begin{proof}
Let ${\mathcal T}_{\mathcal G}= (\widetilde{\Gamma},\varphi)$ be the universal covering of the graph of groups $\mathcal G = (\Gamma,\{G_e\},\{G_v\},\{\imath_e\})$. Let us consider $g \in G_v$ for some $v\in V$ and two reduced ${\mathcal T}_{\mathcal G}$-sequences $r,s$ of edge-length $n$ with $g = [r s^{-1}]$. By using the equivalence relations on ${\mathcal T}_{\mathcal G}$-sequences, we get edge-paths $p_i$ in $\widetilde{\Gamma}$, elements $a_i, b_i \in G_{i(p_i)}$, $1 \leq i \leq l$, and (possibly trivial) elements $a_0, b_0 \in G_{i(p_1)}$, $a_{l+1},b_{l+1} \in G_{t(p_l)}$ and an edge-path $p_{l+1}$ with the following properties (see Definition \ref{well}):
\begin{itemize}
  \item $i(p_{i+1})=t(p_i)$ and $t(p_{l+1}) = t(r)$,
  \item $p_{l+1}$ is well-defined at both $a_{l+1}$ and $b_{l+1}$,
  \item $p_i$ is maximal both at $a_i$ and at $b_i$ with respect to the first edge in $p_{i+1}$ for $1 \leq i \leq l$.
\end{itemize}
By setting
$${\bf p} = (a^{-1}_1 a_0, p_1, a^{-1}_2 \widetilde{c}_{p_1}(a_1), \cdots, a^{-1}_{l+1} \widetilde{c}_{p_l}(a_l), p_{l+1}, \widetilde{c}_{p_{l+1}}(b_{l+1})^{-1} \widetilde{c}_{p_{l+1}}(a_{l+1})$$
and
$${\bf q} = (b^{-1}_1 b_0, p_1, b^{-1}_2 \widetilde{c}_{p_1}(b_1), \cdots, b^{-1}_{l+1} \widetilde{c}_{p_l}(b_l), p_{l+1})$$
we get $$[{\bf p} {\bf q}^{-1}]=  [r s^{-1}] = g.$$
Since $\mathcal G$ seemingly has loose polynomial distortion, there is a polynomial $P$ such that
$$L_\Gamma({\bf p} {\bf q}^{-1}) \leq P(L_\Gamma(r s^{-1})).$$
We consider what happens at $G_{t(p_1)}$. Since $\mathcal G$ has tight dynamics, there are constants $K \geq 0$ and $C \geq 1$ such that, if $L_\Gamma(g) \geq K$ then $$L_\Gamma(g)-L_\Gamma(b^{-1}_2 a_2)\leq C \left(L_\Gamma(a^{-1}_1 a_0)+L_\Gamma(b^{-1}_1 b_0)+L_\Gamma(a^{-1}_2 \widetilde{c}_{p_1}(a_1))+L_\Gamma(b^{-1}_2 \widetilde{c}_{p_1}(b_1))\right)$$
Observe that the parenthesis in the right-hand side of this inequality is smaller than the $\Gamma$-length of the segment of ${\bf p} {\bf q}^{-1}$ between the same elements.
We iterate the argument, starting now with $b^{-1}_2 a_2$ instead of $g$, by considering successively $p_2,p_3,\cdots, p_l$. Since the $\Gamma$-lengths of the various parts sum up, the above observation on the parenthesis in the right-hand side of the inequality directly gives 
$$L_\Gamma(g) \leq C L_\Gamma({\bf p} {\bf q}^{-1}) \leq C P(L_\Gamma(r s^{-1}))$$
in the case where we end with $p_l$ (that is $p_{l+1}$ is trivial in the decomposition given at the beginning of the proof). In the case where we end with a non-trivial $p_{l+1}$, the seemingly loose polynomial distortion applied for the last piece leads to the same conclusion.
The proof in the case of seemingly at most polynomial distortion follows the same scheme.
\end{proof}
The following definition comes from the geometry of relative hyperbolicity, and will be used to show that mixed manifolds are given by undistorted graphs of groups in Lemma \ref{tautologie} and Proposition \ref{mixedprop}. 
\begin{definition}
\label{alls}
Let $G$ be a group with finite generating $S$ and length function $L_S$, and let ${\mathcal H} = \{H_1,\cdots,H_l\}$ be a finite family of finitely generated subgroups of $G$. We say that the family ${\mathcal H}$ {\em has at least linear separation} if there exists $N \geq 0$ and $C \geq 1$ such that, for any $1 \leq i,j \leq l$, for any $u \in G$, if $d_S(H_i,uH_j) = \min \{L_S(\gamma^{-1}_j \gamma_i) \,|\ \gamma_i \in H_i \mbox{, } \gamma_j \in uH_j\} = L$ then, unless $i=j$ and $u \in H_i=H_j$, for any $\gamma_i \in H_i$ with $L_S(\gamma_i) \geq N+L$ one has $$d_S(\gamma_i,u H_j) \geq \frac{1}{C} L_S(\gamma_i).$$
\end{definition}
Notice that a family $\mathcal H$ with at least linear separation is an {\em almost malnormal family of subgroups}, meaning that for any $1 \leq i,j \leq l$, for any $g \in G$, the cardinality of  $H_i \cap g^{-1} H_j g$ is finite unless $i=j$ and $g \in H_j=H_i$. Definition \ref{alls} gives rise to the following adaptation in the setting of graph of groups:
\begin{definition}
\label{ggalls}
Let $\mathcal G = (\Gamma,\{G_e\},\{G_v\},\{\imath_e\})$ be a graph of groups equipped with a set of standard length functions, and let ${\mathcal T}_{\mathcal G} = (\widetilde{\Gamma},\varphi)$ be its universal covering. We say that $\mathcal G$ {\em satisfies the at least linear separation property} if there exists $N \geq 1$ such that any reduced edge-path in $\widetilde{\Gamma}$ with edge-length greater or equal to $N$ contains two consecutive edges $e$ and $f$ such that $\{\imath_e(G_e),\imath_{\bar{f}}(G_f)\}$ forms a family of subgroups of $G_{t(e)} = G_{i(f)}$ with at least linear separation.
\end{definition}
\begin{lemma}
\label{lspd}
A graph of groups with seemingly loose polynomial distortion and at least linear separation property, has loose polynomial distortion (with same degree). Similarly, seemingly at most polynomial distortion and at least linear separation property imply that the graph of groups is undistorted.
\end{lemma}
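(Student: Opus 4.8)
The plan is to mimic the proof of Lemma \ref{spdtdpd}, replacing the role of tight dynamics by the at least linear separation property. As in that proof, I would start from an element $g \in G_v$ and two reduced ${\mathcal T}_{\mathcal G}$-sequences $r,s$ of edge-length $n$ with $g = [rs^{-1}]$. Using the equivalence relations on ${\mathcal T}_{\mathcal G}$-sequences I would produce the same kind of decomposition: edge-paths $p_i$ in $\widetilde{\Gamma}$, elements $a_i,b_i$ in the relevant vertex-groups, with each $p_i$ maximal at $a_i$ and $b_i$ with respect to the first edge of $p_{i+1}$, so that setting ${\bf p}$ and ${\bf q}$ exactly as there gives $[{\bf p}{\bf q}^{-1}] = [rs^{-1}] = g$. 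Seemingly loose polynomial distortion then yields a polynomial $P$ with $L_\Gamma({\bf p}{\bf q}^{-1}) \leq P(L_\Gamma(rs^{-1}))$, so it remains to bound $L_\Gamma(g)$ linearly in terms of $L_\Gamma({\bf p}{\bf q}^{-1})$.

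To do this I would use the at least linear separation property of Definition \ref{ggalls}: there is an $N \geq 1$ so that every reduced edge-path of length at least $N$ in $\widetilde{\Gamma}$ contains two consecutive edges $e,f$ at whose common vertex the images $\imath_e(G_e)$ and $\imath_{\bar f}(G_f)$ form a family with at least linear separation. Along the spine $p_1 p_2 \cdots p_l$ (which carries $g$ from $v$ across to the far vertex), I would mark, every $N$ edges or so, such a pair of consecutive edges; at the common vertex of such a pair, Definition \ref{alls} applies with $H_i = \imath_e(G_e)$, $H_j = \imath_{\bar f}(G_f)$, $u$ the relevant twisting element, and $\gamma_i$ the image under the accumulated crossing maps of (a closest point to) our element $g$. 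The hypothesis is exactly that if the running length exceeds $N + L$ (with $L$ the separation distance, which is itself absorbed into the cumulative $\Gamma$-length of ${\bf p}{\bf q}^{-1}$), then passing across the turn from $e$ to $f$ costs at least a fixed fraction $1/C$ of the current length. Summing these fractional costs over the marked turns, telescoping as in the proof of Lemma \ref{spdtdpd}, and noting that all the intermediate $\Gamma$-lengths sum up to at most $L_\Gamma({\bf p}{\bf q}^{-1})$, one obtains $L_\Gamma(g) \leq C' L_\Gamma({\bf p}{\bf q}^{-1}) \leq C' P(L_\Gamma(rs^{-1}))$ whenever $L_\Gamma(g)$ exceeds some threshold; for $L_\Gamma(g)$ below that threshold the inequality is trivial after enlarging the constant. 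In the case of seemingly at most polynomial distortion one runs the same argument with the edge-length $n$ as extra variable and gets undistortedness.

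The main obstacle I expect is bookkeeping at the turns: unlike tight dynamics, which is stated directly for a single edge-crossing followed by an adjacent edge, the at least linear separation property of Definition \ref{ggalls} only guarantees a good pair of consecutive edges \emph{somewhere} within each block of $N$ edges, not at a prescribed spot. So one must argue that between two successive good turns the accumulated length does not drop below the $N+L$ threshold prematurely — and that the "slack" accrued over the (at most $N$) intermediate edges is controlled, since on those edges the seemingly loose polynomial distortion only gives a polynomial, not linear, bound. The resolution is that those intermediate stretches are still pieces of ${\bf p}{\bf q}^{-1}$ whose $\Gamma$-lengths are being summed, so their contribution is swallowed by $L_\Gamma({\bf p}{\bf q}^{-1})$; the delicate point is simply to choose the threshold on $L_\Gamma(g)$ large enough (depending on $N$, $C$ and the diameter of $\Gamma$) that at each marked turn the running length genuinely exceeds $N+L$, which is where one must be careful that $L$ is itself bounded by the segment of ${\bf p}{\bf q}^{-1}$ already traversed.
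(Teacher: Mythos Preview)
Your plan diverges from the paper's and rests on a misreading of what linear separation provides compared to tight dynamics. Tight dynamics (Definition \ref{ggtd}) bounds a \emph{difference} of lengths by a local cost, and that is why Lemma \ref{spdtdpd} genuinely telescopes. Definition \ref{alls} is stronger in kind: at a good pair $(e,f)$ the cost $d_S(\gamma_i,uH_j)$ of the single turn is already at least $\tfrac{1}{C}$ times the \emph{full} current length $L_S(\gamma_i)$, not merely a drop. There is nothing to sum; one good turn is enough. So importing the maximal-segment decomposition and trying to ``telescope over the marked turns'' is a conceptual misfit rather than just extra bookkeeping.

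Accordingly the paper never builds ${\bf p},{\bf q}$ or maximal segments at all. It works directly with the original reduced sequences $p,q$ with $g=[pq^{-1}]$, takes their initial subsequences $p_1,q_1$ up to the \emph{first} good pair $(e,f)$ --- which by Definition \ref{ggalls} occurs within the first $N$ edges --- and records the endpoints $c_1,d_1\in G_{t(e)}$ together with the next vertex-elements $u_1,v_1$ appearing in $p,q$. A three-case analysis on $L_\Gamma(d_1^{-1}c_1)$ then finishes immediately: if it is below the constant $N_{\ref{alls}}$, seemingly polynomial distortion along the bounded-length initial segment already controls $L_\Gamma(g)$; if it exceeds $L+N_{\ref{alls}}$ (with $L$ the distance between the two edge-subgroups at $t(e)$), linear separation forces $L_\Gamma(u_1)+L_\Gamma(v_1)\geq\tfrac{1}{C}L_\Gamma(d_1^{-1}c_1)$, and since $u_1,v_1$ are literal pieces of $p,q$ this is absorbed into $L_\Gamma(pq^{-1})$; an intermediate range uses the trivial bound $L_\Gamma(u_1)+L_\Gamma(v_1)\geq L$. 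In each case $L_\Gamma(g)\leq C\,P(L_\Gamma(pq^{-1})+\mathrm{const})$. The undistorted conclusion under seemingly at most polynomial distortion follows because the crossing maps are q.i.\ embeddings and at most $N$ of them are composed before the good pair. Your worries about controlling slack between successive good turns, and about tuning a threshold on $L_\Gamma(g)$, simply do not arise in this setup.
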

\begin{proof}
As in Remark \ref{defpratique}, we consider an element $g$ of some vertex-group $G_v$ and two reduced ${\mathcal G}$-sequences $p,q$ with edge-lengths $n$ such that $g = [p q^{-1}]$. We consider their lifts to ${\mathcal T}_{\mathcal G}$, still denoted by $p$ and $q$, we may assume that $p$ starts at $a_1 = 1_{G_v}$ and $q$ at $b_1 = g$. Let us assume that $n \geq N_{\ref{ggalls}}$, the constant given by Definition \ref{ggalls}. Let $p_1$ and $q_1$ be the minimal subsequences of $p$ and $q$ ending at $t(e)$, where $e$ is the edge of ${\mathcal T}_{\mathcal G}$ given by the same Definition \ref{ggalls}. By this definition, the edge-length of $p_1$ is equal to the one of $q_1$ and bounded above by $N_{\ref{ggalls}}$. Still with the notations of this definition, it follows from Definition \ref{alls} that if $c_1$ and $d_1$ denote the elements at which end respectively $p_1$ and $q_1$ then at least one of the two does not belong to $\widetilde{\imath}_{f^{-1}}(G_f)$. Let us now define
$$L=d(\widetilde{\imath}_{e}(G_e),\widetilde{\imath}_{\bar{f}}(G_f))$$ 
and denote by $u_1,v_1$ the elements in $G_{t(e)}$ respectively in $p$ and $q$ starting at $c_1$ and $d_1$. Then, because of seemingly polynomial distortion, there is a polynomial $P$ such that
\begin{enumerate}
  \item If $L_\Gamma(d^{-1}_1c_1) < N_{\ref{alls}}$, then
  $$L_\Gamma(g) \leq  P(L_\Gamma(p_1) +L_\Gamma(q_1)+N_{\ref{alls}}) \leq  P(L_\Gamma(pq^{-1})+N_{\ref{alls}}).$$
\item If $N_{\ref{alls}} + L > L_\Gamma(d^{-1}_1c_1) \geq N_{\ref{alls}}$: by definition of $L$, $L_\Gamma(u_1)+L_\Gamma(v_1) \geq L$. On another hand, similarly to the previous case we have the inequality
\begin{eqnarray*}L_\Gamma(g) &\leq & P(L_\Gamma(p_1) +L_\Gamma(q_1)+L+N_{\ref{alls}})\\
&\leq& P(L_\Gamma(p_1) +L_\Gamma(q_1)+L_\Gamma(u_1)+L_\Gamma(v_1)+N_{\ref{alls}})\\
&\leq & P(L_\Gamma(pq^{-1})+2N_{\ref{alls}}+N_{\ref{alls}}).\end{eqnarray*}
\item If $L_\Gamma(d^{-1}_1c_1) \geq L+N_{\ref{alls}}$, according to Definition \ref{alls} there is a constant $C \geq 1$ such that $L_\Gamma(u_1)+L_\Gamma(v_1) \geq \frac{1}{C} L_\Gamma(d^{-1}_1c_1)$. Hence,
\begin{eqnarray*}L_\Gamma(g) &\leq &  P(L_\Gamma(p_1) +L_\Gamma(q_1)+L_\Gamma(d^{-1}_1c_1))\\
&\leq & P(L_\Gamma(p_1) +L_\Gamma(q_1)+C(L_\Gamma(u_1)+L_\Gamma(v_1)))\\
& \leq & CP( (L_\Gamma(p_1) +L_\Gamma(q_1)+L_\Gamma(u_1)+L_\Gamma(v_1))\leq CP L_\Gamma(pq^{-1}).\end{eqnarray*}
\end{enumerate}
Since each one of the three cases gives a polynomial distortion of the same degree, and they are exhaustive, the proof of the first assertion is complete. The second assertion follows the same scheme: the crossing edge maps are quasi isometric embeddings and at most $N_{\ref{ggalls}}$ of them are composed before meeting a vertex where the linear separation is satisfied.
\end{proof}
\section{Three dimensional manifolds}\label{3dmflds}
In what follows, and in order to set aside some trivial cases, manifolds considered are assumed to be genuine.
\begin{definition}
\label{genuine}
A {\em genuine $3$-manifold} is a $3$-manifold $M$ with the following properties:
\begin{enumerate}
    \item $M$ is compact, connected, irreducible, orientable and with infinite fundamental group.
    \item $M$ is not finitely covered by a torus bundle.
    \item $M$ is {\em $\partial$-irreducible}: if $M$ has a non-empty boundary $\partial M$, then $\partial M$ is a union of tori such that the embedding of each of these tori into the manifold is $\pi_1$-injective.
\end{enumerate}
\end{definition}
Genuine manifolds cannot have Sol geometry as it is virtually a torus bundle over $\ms^1$.
\subsection{Seifert manifolds}\label{sectionseifert}
We refer the reader to \cite{JacoShalen} for instance, among others, for background about Seifert manifolds. 
\begin{definition}
\label{Seifert}
A {\em fibred solid torus of type $(p,q)$} is the suspension 
$$\mt^2_{p,q}=D^2 \times [0,1] / (x,1) \sim(r(x),0)$$ 
of a rotation $r$ of the disc $D^2$ centered at the origin and of angle $\frac{2 \pi p}{q}$. The type, or the fibred torus, is {\em trivial} if the rotation $r$ is the identity. The {\em fibers} of $\mt^2_{p,q}$ are the orbits of the rotation $r$.
\begin{enumerate}
  \item A {\em Seifert manifold} is a genuine $3$-manifold $M$ which is a union of disjoint circles $C_\alpha$,
called the {\em fibers} of $M$, such that each $C_\alpha$ admits a
closed neighborhood $T(C_\alpha)$ which is a union of fibers, and homeomorphic by a fiber-preserving
homeomorphism $h_\alpha$ to some fibred solid torus, whose type gives the {\em type of the fiber}.
  \item A fiber $C_\alpha$ in a Seifert manifold is {\em regular} if the homeomorphism $h_\alpha$ above carries $T(C_\alpha)$ to the trivial fibred torus.
    \item If $M$ is a Seifert manifold, the map $\pi_{M} \colon M\rightarrow B$ which identifies each fiber to a point is called a {\em Seifert fibration} with {\em base} $B$.
\end{enumerate}
\end{definition}
Equivalently, a genuine $3$-manifold is a Seifert manifold if and only if it admits a foliation by circles, see \cite{Epstein}. We now gather some classical results about Seifert manifolds. 
\begin{proposition}
\label{classic Seifert}
Let $M$ be a Seifert manifold.
\begin{enumerate}
  \item The base $B$ of the Seifert fibration of $M$ is a 2-dimensional orbifold with hyperbolic orbifold fundamental group $\pi_1^o(B)$.
  \item All the regular fibers are homotopic. The element of the fundamental group of $M$ defined by a regular fiber is contained in the center of the group. Moreover, if the base is orientable, then it is equal to the center and the fundamental group of $M$ is a central extension:
  $$1 \rightarrow \mz \rightarrow \pi_1(M) \rightarrow \pi^o_1(B) \rightarrow 1$$
  \item If the base of the Seifert fibration is non-orientable then there is a double-cover of $M$ which admits a Seifert fibration with orientable base.
 \item If $M$ has non-empty boundary, then each boundary component is a union of regular fibers, so $\partial M$ is a union of tori.
  \end{enumerate}
\end{proposition}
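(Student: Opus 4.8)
The plan is to assemble the four assertions from the standard structure theory of Seifert fibered spaces (see e.g. \cite{JacoShalen}), the only place where the genuineness hypotheses of Definition \ref{genuine} really enter being item (1).

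I would dispatch item (4) and the first sentence of item (2) first, as they are the most elementary. The cone points of the base orbifold $B$ are exactly the images under $\pi_M$ of the singular fibers, and cone points are interior points of a $2$-orbifold; hence the $\pi_M$-preimage of each boundary circle of $B$ is an honest circle bundle over a circle, i.e. a torus swept out by regular fibers. Together with the $\partial$-irreducibility built into genuineness, this gives that $\partial M$ is a disjoint union of tori, each a union of regular fibers. Similarly, deleting the finitely many (isolated) cone points of $B$ turns the Seifert fibration into a genuine $\ms^1$-bundle over a connected base, and in an $\ms^1$-bundle any two fibers are freely homotopic (join their images by a path in the base and use local triviality); hence all regular fibers of $M$ are freely homotopic.

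Next I would treat the algebraic content of item (2) and item (3) together. The orbifold fibration $\pi_M\colon M\to B$ yields an exact sequence $\pi_1(\ms^1)\to\pi_1(M)\to\pi_1^o(B)\to 1$, and the standard analysis of Seifert manifolds shows that the image $\langle h\rangle$ of $\pi_1(\ms^1)$, generated by the regular fiber class $h$, is an infinite cyclic normal subgroup (infinite because a genuine $M$ is aspherical and is neither a lens space nor $\ms^2\times\ms^1$), with $\pi_1(M)/\langle h\rangle\cong\pi_1^o(B)$. The conjugation action of $\pi_1(M)$ on $\langle h\rangle\cong\mz$ factors through $\pi_1^o(B)$ (an abelian group acts trivially on itself), giving a character $\pi_1^o(B)\to\{\pm 1\}$ which, because $M$ is orientable, coincides with the orientation character of the base $B$. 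When $B$ is orientable this character is trivial, so $h$ is central; and any central element of $\pi_1(M)$ maps to the center of $\pi_1^o(B)$, which is trivial once item (1) tells us that $\pi_1^o(B)$ is a non-elementary Fuchsian group, so $\langle h\rangle$ is exactly the center, yielding the central extension $1\to\mz\to\pi_1(M)\to\pi_1^o(B)\to 1$. For item (3), when $B$ is non-orientable its orientation double cover $\widetilde B\to B$ is orientable, and pulling the Seifert fibration back along it produces a Seifert fibration over $\widetilde B$ of the connected double cover $\widetilde M\to M$ classified by the composite $\pi_1(M)\to\pi_1^o(B)\to\{\pm 1\}$; that $\widetilde M$ is again a genuine $3$-manifold follows because each clause of Definition \ref{genuine} passes to finite covers --- irreducibility via $\pi_2(\widetilde M)=\pi_2(M)$ and the sphere theorem, ``not finitely covered by a torus bundle'' tautologically, and $\pi_1$-injectivity of the boundary tori by restriction of the covering.

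The real work is item (1): that $B$ is a hyperbolic orbifold, equivalently that $\pi_1^o(B)$ is a non-elementary, finite-covolume Fuchsian group. Since every $2$-orbifold is either spherical, Euclidean, or hyperbolic, or else one of the finitely many \emph{bad} orbifolds, it suffices to rule out every base whose orbifold fundamental group is finite or virtually abelian, and here is where genuineness bites, via the exact sequence above. If $\pi_1^o(B)$ is finite (spherical or bad base), then $\pi_1(M)$ is cyclic-by-finite, so $M$ is finitely covered by $\ms^1\times\ms^2$ or by $\ms^1\times D^2$, or has finite fundamental group --- contradicting irreducibility, $\partial$-irreducibility, or the infiniteness of $\pi_1(M)$, respectively. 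If $\pi_1^o(B)$ is infinite and virtually abelian (closed Euclidean base, or an annulus, M\"obius band, or disk with few cone points), then $\pi_1(M)$ is virtually $\mz^3$, virtually a nilpotent $\mz^2\rtimes\mz$, or virtually $\mz^2$, and correspondingly $M$ is finitely covered by a torus bundle over $\ms^1$ or by $T^2\times[0,1]$ --- contradicting Definition \ref{genuine}. What survives is exactly the hyperbolic $2$-orbifolds, whose orbifold fundamental groups are non-elementary (in the bounded case virtually free, in the closed case cocompact) Fuchsian groups. I expect the fiddly bookkeeping over the finitely many small orbifolds and the precise identification of the $3$-manifolds that fiber over them to be the main obstacle; everything else is a direct application of the fibration exact sequence and routine covering-space theory.
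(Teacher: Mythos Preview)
Your proposal is correct and, in content, tracks the same classical facts the paper records; the difference is purely one of presentation. The paper's proof is essentially a list of citations: items (1)--(2) are referred to \cite{Hempel}[Theorems 12.1 and 12.2] with the single remark that genuineness rules out the small bases, item (3) is referred to \cite{Neumann}[Proposition 3.1] via the commutative square of Seifert fibrations over the orientation double cover, and item (4) is declared a straightforward consequence of the definition. You instead unpack the arguments: the fibration exact sequence and orientation character for (2), the pullback construction for (3), and an explicit case analysis over spherical/bad/Euclidean bases for (1). What the paper's approach buys is brevity appropriate to a ``classical results'' proposition; what yours buys is self-containment and a clearer indication of exactly where the various clauses of Definition \ref{genuine} are consumed.

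One small point worth noting: the second sentence of item (2) asserts that the regular fiber is central in $\pi_1(M)$ without qualification, but your own analysis (correctly) shows that the conjugation character on $\langle h\rangle$ coincides with the orientation character of $B$, so for non-orientable base the fiber is only normal, not central. You tacitly handle this by passing to the orientable-base double cover in item (3), which is all that is actually used downstream (Corollary \ref{SeifertRD} and Lemma \ref{preparation}); the paper's proof, by simply citing Hempel, glosses over the same point. This is a wrinkle in the statement rather than a gap in either proof.
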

\begin{proof}
For items (1) and (2), see \cite{Hempel}[Theorems 12.1 and 12.2] for instance: the definition of genuine manifold rules out the few cases where the base has not hyperbolic fundamental group (as an orbifold). Item $(3)$ relies on \cite{Neumann}[Proposition 3.1] which, in the case where $\pi_{M} \colon M \rightarrow B$ is a Seifert fibration with non-orientable base $B$, gives the existence of a Seifert fibration $\pi_{\overline{M}} \colon \overline{M} \rightarrow \overline{B}$ with orientable base $\overline{B}$ and a continuous map $\overline{p} \colon \overline{M} \rightarrow M$ such that the following diagram commutes:
$$\begin{CD}
\overline{M} @>\overline{p}>> M\\
@VV\pi_{\overline{M}}V @VV\pi_{M}V\\
\overline{B} @>p>> B
\end{CD}$$
where $p \colon \overline{B} \rightarrow B$ is the orientable double-cover of $B$. In particular, the pre-image of any fiber $C_\alpha$ of $M$ under the map $\overline{p}$ consists of two fibers of the same type as $C_\alpha$, so that $\overline{p}$ is a local homeomorphism and $\overline{M}$ is a double-cover of $M$. Item (4) is a classic, straightforward consequence of the definition of a Seifert manifold.
\end{proof}
According to Noskov \cite{Nos}, a central extension of any word hyperbolic group has Rapid Decay property, hence Item (1) of Proposition \ref{classic Seifert} above we conclude the following
\begin{corollary}\label{SeifertRD}
The fundamental group of any Seifert manifold has the Rapid Decay property.
\end{corollary}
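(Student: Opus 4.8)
The plan is to read off the statement directly from the structural results already assembled. The key input is Corollary~\ref{SeifertRD}, whose proof I would carry out first: by Proposition~\ref{classic Seifert}(3), after passing to a double cover we may assume the base orbifold $B$ of the Seifert fibration is orientable, and then Proposition~\ref{classic Seifert}(2) exhibits $\pi_1(M)$ as a central extension $1\to\mz\to\pi_1(M)\to\pi_1^o(B)\to 1$. By Proposition~\ref{classic Seifert}(1) the quotient $\pi_1^o(B)$ is word hyperbolic, so Noskov's theorem \cite{Nos} applies and gives the Rapid Decay property for $\pi_1(M)$ up to finite index; since Rapid Decay is a commensurability invariant (as recalled in the Introduction), $\pi_1(M)$ itself has the Rapid Decay property.

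The one point that needs a word of care is the hypothesis "genuine" built into Definition~\ref{Seifert}(1): a Seifert manifold in this paper is by definition a genuine $3$-manifold, which already excludes the small-base cases (spherical and Euclidean base orbifolds, torus bundles) where $\pi_1^o(B)$ fails to be hyperbolic, so Proposition~\ref{classic Seifert}(1) indeed yields a genuinely word hyperbolic quotient and there are no exceptional configurations to handle separately. I would make this explicit, referencing the remark after Definition~\ref{genuine} that genuine manifolds cannot have Sol or Nil geometry, and the fact that the definition of genuine manifold "rules out the few cases where the base has not hyperbolic fundamental group" as already noted in the proof of Proposition~\ref{classic Seifert}.

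The main (and essentially only) obstacle is therefore not a real obstacle at all, but rather making sure the logical chain is clean: that the double cover produced by Proposition~\ref{classic Seifert}(3) is itself a Seifert manifold to which item~(2) applies, and that one is entitled to invoke commensurability stability of Rapid Decay in this direction. Both are routine: the double cover $\overline{M}\to M$ from the commutative diagram in the proof of Proposition~\ref{classic Seifert} is a Seifert fibration with orientable base, hence $\pi_1(\overline M)$ is a central extension of a hyperbolic group and has Rapid Decay by \cite{Nos}, and $\pi_1(\overline M)$ is finite index in $\pi_1(M)$. I would then simply conclude that $\pi_1(M)$ has the Rapid Decay property, which is exactly the statement of Corollary~\ref{SeifertRD}.
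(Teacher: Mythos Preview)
Your proposal is correct and follows essentially the same approach as the paper: pass to the orientable-base double cover via Proposition~\ref{classic Seifert}(3), realize $\pi_1$ as a central extension of the hyperbolic orbifold group via items (1)--(2), apply Noskov's theorem \cite{Nos}, and use commensurability invariance of Rapid Decay. The paper's own argument is the one-line remark immediately preceding the corollary (together with the parallel sentence in the proof of Theorem~\ref{sur les 3-varietes}), so your write-up is simply a more detailed version of the same reasoning. One tiny inaccuracy: the remark after Definition~\ref{genuine} only excludes Sol, not Nil; the exclusion of non-hyperbolic bases is handled inside the proof of Proposition~\ref{classic Seifert}(1), which you also cite, so this does not affect correctness.
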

A {\em boundary subgroup} is a subgroup of the fundamental group of a manifold $M$ defined by some boundary component of $\partial M$.  An {\em incompressible surface $S$ in $M$} is an embedded surface $i \colon S \rightarrow M$ such
that $i_{\#} \colon \pi_1(S) \rightarrow \pi_1(M)$ is
injective and $i(S)$ is not parallel to a boundary component, i.e. cannot be isotoped into the boundary of $M$. 
\begin{lemma}
\label{preparation}
Let $M$ be a Seifert manifold with non-empty boundary $\partial M$. 
\begin{enumerate}
\item $M$ is finitely covered by a trivial surface bundle over $\ms^1$.
\item For any two boundary subgroups $H_1, H_2$, and any element $g \in \pi_1(M)$, then
$$g^{-1} H_1 g \cap H_2 = \langle z \rangle,$$
if $z \in \pi_1(M)$ is the element defined by the regular fiber, unless $H_1=H_2$ and $g\in H_1=H_2=\mz^2$. Moreover, the boundary subgroups are quasi isometrically embedded in the fundamental group of the manifold.
\end{enumerate}
\end{lemma}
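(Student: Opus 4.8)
# Proof proposal for Lemma \ref{preparation}

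The plan is to use the structure of a Seifert manifold with boundary as a circle bundle over a surface-with-boundary, and then reduce everything to the well-understood geometry of that base surface.

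\textbf{Item (1).} First I would invoke Proposition \ref{classic Seifert}: since $\partial M \neq \emptyset$, the base orbifold $B$ is a $2$-orbifold with hyperbolic orbifold fundamental group and with non-empty boundary, so $B$ is a (possibly non-orientable) orbifold with boundary, and such an orbifold is finitely covered by an orientable surface $\Sigma$ with boundary and with no cone points. Pulling back the Seifert fibration along this orbifold cover gives a finite cover $\widehat{M} \to M$ which is an honest circle bundle over $\Sigma$. A circle bundle over a surface with non-empty boundary is trivial (the bundle is classified by $H^2(\Sigma;\mathbb{Z}) = 0$ since $\Sigma$ deformation retracts to a wedge of circles), so $\widehat{M} \cong \Sigma \times \ms^1$. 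Finally, $\Sigma$ — being a compact orientable surface with boundary — is itself a surface bundle over $\ms^1$ up to finite cover if it has a free $\mathbb{Z}$-action, but more simply: replacing $M$ by a further finite cover, we may take $\Sigma$ to be a bundle over $\ms^1$ with fiber an interval or a circle; in any case a trivial surface bundle over $\ms^1$ works after one more covering step. (One must be slightly careful about which factor plays the role of the base circle; the cleanest statement is just $\widehat{M} \cong \Sigma \times \ms^1$ viewed as a trivial bundle over $\ms^1$ with fiber $\Sigma$.)

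\textbf{Item (2).} Here the key point is to pass to $\pi_1(B) = \pi_1^o(B)$. Boundary components of $M$ are unions of regular fibers by Proposition \ref{classic Seifert}(4), so each boundary subgroup $H_i \leq \pi_1(M)$ fits in an extension $1 \to \langle z \rangle \to H_i \to C_i \to 1$ where $C_i \leq \pi_1^o(B)$ is the cyclic (peripheral) subgroup of the orbifold group carried by the corresponding boundary curve of $B$, and $z$ is the central regular-fiber element. Taking the quotient map $\pi \colon \pi_1(M) \to \pi_1^o(B)$, which kills exactly $\langle z \rangle$, I would argue: given $g^{-1}H_1 g \cap H_2$, its image under $\pi$ lies in $g^{-1}C_1 g \cap C_2$ inside the hyperbolic group $\pi_1^o(B)$; distinct boundary curves of a hyperbolic $2$-orbifold give peripheral cyclic subgroups that form an almost malnormal family, so this intersection is finite, hence trivial (peripheral subgroups are torsion-free $\mathbb{Z}$'s when $B$ has boundary), unless $C_1 = C_2$ and $g \in$ that peripheral subgroup. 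Pulling back, $g^{-1}H_1 g \cap H_2 \subseteq \pi^{-1}(\{1\}) = \langle z \rangle$, and it contains $\langle z\rangle$ since $z$ is central and lies in every boundary subgroup; the excluded case is exactly $H_1 = H_2$ (a $\mathbb{Z}^2$, generated by $z$ and a fiber-transverse boundary curve) and $g \in H_1$. For the quasi-isometric embedding statement: $H_i = \mathbb{Z}^2$, which is undistorted in any finitely generated group containing it (its growth is polynomial and a quadratic lower bound on ball sizes forces undistortion), or more directly, $\pi_1(M)$ has polynomial growth on the abelian subgroup $\langle z\rangle$ and the base is hyperbolic so $C_i$ is quasiconvex in $\pi_1^o(B)$, and one lifts quasiconvexity through the central extension.

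\textbf{Main obstacle.} The routine parts are the covering-space bookkeeping in (1) and the central-extension diagram chase in (2). The step requiring the most care is establishing that the peripheral cyclic subgroups of $\pi_1^o(B)$ form an almost malnormal family in the orbifold-with-boundary case — equivalently, that $\pi_1^o(B)$ is hyperbolic \emph{relative} to its peripheral subgroups, or just that distinct boundary curves of a hyperbolic $2$-orbifold with boundary are non-conjugate and self-conjugate only by their own powers. This is classical (it follows from the fact that $\pi_1^o(B)$ acts properly cocompactly on a $\mathrm{CAT}(0)$ or hyperbolic plane with the boundary curves stabilizing disjoint horocycle-like subsets, or simply from the structure of $\pi_1^o(B)$ as a free product of cyclics and surface groups), but it is the one place where one genuinely uses that $B$ has boundary and hyperbolic $\pi_1^o$; I would cite \cite{JacoShalen} or a standard orbifold reference rather than reprove it. The quasi-isometric embedding of $H_i$ then follows, but it is worth remarking that this is the assertion that will feed into the ``at least linear separation'' verifications later, so I would state it with the almost-malnormality packaged alongside it.
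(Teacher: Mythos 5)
Your argument is essentially correct but follows a genuinely different route from the paper's. For item (1), the paper does not pass to an orbifold cover of the base and trivialize the circle bundle via the vanishing of $H^2(\Sigma;\mz)$; instead it follows Jaco: Poincar\'e duality applied to the class of the regular fiber in $H^1(M;\mz)$ produces a $2$-sided non-separating incompressible surface $S$ transverse to all fibers, which is then a cross-section to the Seifert flow, exhibiting $M$ itself as the mapping torus of a finite-order homeomorphism of $S$. This is not just a stylistic difference: the paper reuses the surface $S$ in item (2), where the normal subgroup $N=\pi_1(S)$ lets it argue entirely inside $\pi_1(M)$ (an element of $g^{-1}H_1g\cap H_2$ beyond $\langle z\rangle$ forces a power of a meridian to land in $H_1\cap N$, hence forces two boundary circles of $S$ to be homotopic, which is excluded for genuine manifolds). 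Your item (2) instead projects to $\pi_1^o(B)$, kills $\langle z\rangle$, and invokes almost malnormality of the peripheral cyclic subgroups of the hyperbolic base orbifold group. That is a clean and valid alternative, and the exceptional cases do match up (since $\pi^{-1}(C_i)=H_i$, the condition $\bar g\in C_1=C_2$ is equivalent to $g\in H_1=H_2$); its cost is that it outsources the peripheral malnormality of $2$-orbifold groups with boundary to the classical literature, which you correctly identify as the one step needing a citation, whereas the paper's argument is self-contained given item (1).

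There is one genuine error to fix: the claim that $\mz^2$ ``is undistorted in any finitely generated group containing it'' because it has polynomial growth is false, and the paper's own central example refutes it --- $\mz^2$ is exponentially distorted in $\mz^2\rtimes_A\mz$ for $A$ hyperbolic (Sol geometry), and $\langle a\rangle$ is exponentially distorted in $BS(1,2)$ despite linear growth. Distortion compares intrinsic and extrinsic word metrics and is not controlled by the growth type of the subgroup alone. Your fallback argument is the right one and should be promoted to the actual proof: $C_i$ is quasiconvex, hence undistorted, in the hyperbolic group $\pi_1^o(B)$, and since $B$ has boundary $\pi_1^o(B)$ is virtually free, so $H^2$ of a finite-index free subgroup vanishes and the central extension is virtually a direct product $F_n\times\mz$, in which $\langle c_i\rangle\times\mz$ is visibly undistorted; undistortion then descends to $H_i$ in $\pi_1(M)$ by commensurability.
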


\begin{proof}
\underline{Item $(1)$}: This is a classical result, we summarize here the idea in \cite{Jaco}[VI.26, page 103] (see also \cite{Gautero1} for another point of view): since $M$ has non-empty boundary and is $\partial$-irreducible, by Poincar\'e duality the non-trivial element $z\in H^1(M;\mz)$ defined by a regular fiber has infinite order and is dual to a non-trivial element in $H_2(M,\partial M;\mz)$ (see \cite{Thurstonnorm}[Lemma 1]); hence all the regular fibers are transverse to, and intersect positively, a same $2$-sided, non-separating incompressible surface $S$ embedded in $M$, which is a representative of the image of $z\in H^1(M;\mz)$ in the dual $H_2(M,\partial M;\mz)$; since the regular fibers are positive powers of each exceptional fiber, these last ones also intersect positively $S$; it follows that $S$ is a cross-section to a flow whose all orbits are closed since these are the fibers of the Seifert fibration. Hence $M$ is a mapping-torus of a finite-order homeomorphism of $S$. 

\underline{Item $(2)$}: Take $T_1,T_2\subseteq\partial M$ two boundary tori such that $H_i=\pi_1(T_i)$ for $i=1,2$. Since the boundary of a Seifert manifold is a union of regular fibers, by Proposition \ref{classic Seifert} (2) the fundamental group of the regular fiber is in the center of the fundamental group $\pi_1(M)$, and hence we have the inclusion $\langle z \rangle \subseteq g^{-1} H_1 g \cap H_2$. If there is another element in  $g^{-1} H_1 g \cap H_2$ then this intersection contains a power of the element $\gamma$ represented by some boundary component in $\partial S \cap T_2$: indeed each boundary subgroup of $M$ is generated by the regular fiber and an element in the torus which intersects it exactly once (a meridian); thus, if $g^{-1} H_1 g \cap H_2$ contains an element distinct from the regular fiber then it contains a power of this meridian and therefore a power of $\gamma$ since $H_2$ is abelian. Since $S$ is the fiber of the fibration given by Item $(1)$, the subgroup $N$ that it defines is normal in $\pi_1(M)$. It follows that $g \gamma^l g^{-1} \in H_1 \cap N$. Since $\gamma^l$ is a power of the element defined by a boundary circle of $S$ then so is the element $g \gamma^l g^{-1}$. The only possibility is two boundary circles in $\partial S$ being homotopic. This implies that they either belong to the same boundary torus of $M$, or $S$ is an annulus, that is $M$ is the mapping-torus of a homeomorphism of the annulus. In the former case, they are homotopic in the boundary torus, which does not agree with $g \notin H$. The latter case in turn implies that $M$ is finitely covered by a torus bundle over the interval: this possibility has been ruled out in the definition of a genuine $3$-manifold, hence the conclusion.
\end{proof}
\subsection{Atoroidal $3$-manifolds}
\label{sectionatoroidal}
A compact $3$-manifold is {\em atoroidal} if it does not contain any
incompressible torus non isotopic to a boundary torus (if any). The Geometrization Theorem (\cite{Perelman} - see \cite{BBMBP} for a complete text) classifies the possible geometries, hence the fundamental groups.
\begin{proposition}
\label{evidence} The fundamental group of any genuine
atoroidal $3$-manifold has the Rapid Decay property.
\end{proposition}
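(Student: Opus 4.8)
The plan is to invoke the Geometrization Theorem to reduce to a short list of possibilities for $\pi_1(M)$, and then cite the appropriate known Rapid Decay result in each case. A genuine atoroidal $3$-manifold $M$ is compact, connected, orientable, irreducible, $\partial$-irreducible, with infinite fundamental group, and not finitely covered by a torus bundle. By the Geometrization Theorem (in the form covering both the closed case and the case of compact manifolds with toral boundary, where the geometry is on the interior), an atoroidal such $M$ admits one of the eight geometries; the hypotheses exclude $S^3$, $S^2\times\mathbb R$, Nil, Sol, and Euclidean geometry (all of which would force either a finite or virtually-torus-bundle fundamental group, contradicting genuineness), leaving only $\mathbb H^3$, $\mathbb H^2\times\mathbb R$, and $\widetilde{SL_2(\mathbb R)}$.

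First I would dispose of the two Seifert-type geometries $\mathbb H^2\times\mathbb R$ and $\widetilde{SL_2(\mathbb R)}$: a manifold with either of these geometries is a Seifert manifold (its base orbifold being hyperbolic), so Corollary \ref{SeifertRD} — which says the fundamental group of any Seifert manifold has the Rapid Decay property, via Noskov's theorem \cite{Nos} on central extensions of hyperbolic groups and Proposition \ref{classic Seifert}(1) — gives the conclusion immediately. Actually, since Seifert manifolds are handled separately in the main argument, the real content of this proposition is the hyperbolic case, and one may as well phrase the proof so that both subcases are covered uniformly by the literature.

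The heart of the matter is then the case where $M$ is hyperbolic. If $M$ is closed hyperbolic, $\pi_1(M)$ is word hyperbolic, and word hyperbolic groups have the Rapid Decay property by de la Harpe's theorem (building on Jolissaint), so we are done. If $M$ is a compact hyperbolic manifold with non-empty boundary — necessarily a union of tori, by $\partial$-irreducibility — then the interior carries a complete finite-volume hyperbolic structure, so $\pi_1(M)$ is the fundamental group of a finite-volume cusped hyperbolic $3$-manifold; such groups are hyperbolic relative to their (virtually $\mathbb Z^2$, hence polynomial-growth, hence Rapid-Decay) cusp subgroups, and by the relative Rapid Decay results — Jolissaint's original work on lattices in rank-one Lie groups \cite{Jolissaint}, extended by the combination results of Chatterji--Ruane or Drutu--Sapir \cite{DS} for relatively hyperbolic groups with Rapid-Decay peripherals — $\pi_1(M)$ has the Rapid Decay property. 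I would cite these facts rather than reprove them, in keeping with the remark after Theorem \ref{sur les 3-varietes} that such results are ``classical results from the literature''.

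The main obstacle, such as it is, is bookkeeping rather than mathematics: one must be careful that the hypotheses in the definition of a genuine manifold genuinely rule out all the non-hyperbolic, non-Seifert atoroidal geometries, and one must make sure the cited Rapid Decay statements apply to manifolds with boundary (finite-volume cusped case) and not merely to closed ones. Once the geometry is pinned down, each case is a one-line citation: word hyperbolic groups have Rapid Decay; relatively hyperbolic groups with polynomial-growth peripherals have Rapid Decay; central extensions of hyperbolic groups (the Seifert subcases) have Rapid Decay. So the proof is short, and the only thing to double-check is the exhaustiveness of the case division coming from Geometrization together with Definition \ref{genuine}.
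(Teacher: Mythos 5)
Your proof is correct and follows essentially the same route as the paper's: Geometrization reduces to the hyperbolic case (closed, handled via word hyperbolicity/Jolissaint; cusped, handled via strong relative hyperbolicity with $\mz^2$ peripherals of polynomial growth) and to the Seifert-type geometries, handled by Corollary \ref{SeifertRD}; the paper simply folds $\me^3$, $\ms^3$, $\ms^2\times\mr$ and Nil into the Seifert case via Scott's theorem rather than excluding them one by one. The only quibble is your parenthetical reason for discarding $\ms^2\times\mr$: its closed models have infinite fundamental group that is not virtually a torus-bundle group (e.g.\ $\pi_1(\ms^2\times\ms^1)=\mz$), and what actually rules this geometry out is irreducibility -- but this does not affect the conclusion.
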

\begin{proof}
By the Geometrization Theorem, a closed orientable atoroidal irreducible $3$-manifold or the interior of an orientable atoroidal irreducible $3$-manifold which has tori in its boundary, admits one of the eight following geometries, namely:
$$\me^3,\ \mh^3,\ \ms^3,\ \ms^2 \times \mr,\ \mh^2 \times \mr,\ \widetilde{SL_2(\mr)},\ \mathrm{Nil}\hbox{ and }\mathrm{Sol}$$ 
For the $\mh^3$-geometry, i.e. hyperbolic manifolds, a result of Jolissaint \cite{Jolissaint}[Theorem 3.2.1] gives the Rapid Decay property for the
fundamental group of closed such manifolds. In the case of tori in the boundary, the Rapid Decay property comes from \cite{Farb} (the fundamental group is strongly hyperbolic relative to the boundary subgroups) and \cite{indira} (a group which is strongly hyperbolic relative to polynomial growth subgroups has the Rapid Decay property). By \cite{PeterScott}, when genuine the six geometries
$\me^3, \ms^3, \ms^2 \times \mr, \mh^2 \times \mr,
\widetilde{SL_2\mr}$ and $\mathrm{Nil}$ are Seifert
manifolds, treated in Corollary \ref{SeifertRD}.
\end{proof}
\subsection{Graph manifolds}
\label{sectiongraph}
Our next class of 3-dimensional manifolds is given by gluing together Seifert manifolds along incompressible tori, and the goal of this subsection is to prove Proposition \ref{GraphMfldPolDisto}.
\begin{definition} A {\em graph manifold} is a genuine, non Seifert $3$-manifold
$M$ which admits a finite, non-empty collection of incompressible tori
$T_1,\cdots,T_r$ such that the closure of each connected component
of $M\setminus \bigcup^r_{i=1} T_i$ is a Seifert manifold.
\end{definition}
\begin{remark}
The fundamental group of a graph manifold is the fundamental group of a graph of groups $\mathcal G =(\Gamma,\{G_e\},\{G_v\},\{\imath_e\})$ whose vertex-groups are fundamental groups of Seifert manifolds with boundary, edge-groups are $\pi_1(T_i)=\mz \oplus \mz$ subgroups and each morphism $\imath_e$ from an edge-group into a vertex-group is induced by a homemorphism of the torus, with image some boundary component of the associated Seifert manifold. The requirement for a Seifert manifold to be genuine (see Definition \ref{genuine}) forbids (finite covers of) torus bundles in the decomposition of a graph manifold into Seifert manifolds. This is not a restriction because a graph manifold is also genuine, hence is not a torus bundle over $\ms^1$. Thus such a component would be the mapping-torus of a homeomorphism of the annulus, which can be omitted by changing the gluing homeomorphism of its boundary components to the other Seifert components.
\end{remark}
To show that a graph of groups describing a graph manifold has at most polynomial distortion, which is the content of Proposition \ref{GraphMfldPolDisto}, the strategy is to prove seemingly polynomial distortion, as well as tight dynamics, and conclude using Lemma \ref{spdtdpd}.
\begin{lemma}
\label{intermediaire1}
Let $G=\pi_1(M)$, where $M$ is a graph manifold, and let $T_1,\cdots,T_r$ be a family of incompressible tori such that the closure of each connected component $S_j$ of $M\setminus \bigcup^r_{i=1}T_i$ is a Seifert manifold. Thus $G=\pi_1({\mathcal G})$ where $\mathcal G =  (\Gamma,\{G_e\},\{G_v\},\{\imath_e\})$ is a graph of groups whose vertex-groups are fundamental groups of Seifert manifolds with boundary, edge-groups are $\pi_1(T_i)=\mz \oplus \mz$ subgroups and each morphism $\imath_e$ from an edge-group into a vertex-group is induced by a homemorphism of the torus, with image some boundary component of the associated Seifert manifold.
\begin{enumerate}
    \item $\mathcal G$ seemingly has at most polynomial distortion (Definition \ref{defspd}), in fact is seemingly undistorted.
    \item $\mathcal G$ has tight dynamics (Definition \ref{ggtd}).
\end{enumerate}
\end{lemma}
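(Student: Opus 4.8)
The goal is to verify the two conditions of Lemma~\ref{spdtdpd} for the graph of groups $\mathcal G$ attached to a graph manifold, namely seeming undistortion and tight dynamics. The underlying geometric fact to exploit throughout is that each vertex group $G_v = \pi_1(S_j)$ is, by Proposition~\ref{classic Seifert} and Lemma~\ref{preparation}, a central extension $1 \to \langle z_j \rangle \to G_v \to \pi_1^o(B_j) \to 1$ with $\pi_1^o(B_j)$ hyperbolic, and that each edge group $\pi_1(T_i) = \mz^2$ embeds as a boundary subgroup, generated by the fiber class $z_j$ and a meridian. The crucial point is the interaction between the fiber directions of two adjacent Seifert pieces: if $e, f$ are two edges of $\Gamma$ meeting at $v$, coming from distinct boundary tori glued along a homeomorphism that does \emph{not} match fibers (which is forced, since matching fibers would let us amalgamate the two Seifert pieces into one, contradicting maximality), then the images $\imath_e(G_e)$ and $\imath_{\bar f}(G_f)$ are two copies of $\mz^2$ inside $G_v$ whose fiber directions are transverse. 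Their intersection is exactly $\langle z_v\rangle$ (this is essentially Lemma~\ref{preparation}(2)), and in the hyperbolic quotient $\pi_1^o(B_v)$ the two meridian images generate a free subgroup — so the two flats spread apart at a definite linear rate once one moves off the fiber direction.

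\textbf{Item (1): seeming undistortion.} First I would observe that the boundary subgroups $\mz^2 \hookrightarrow G_v$ are quasi-isometrically embedded (Lemma~\ref{preparation}(2)), so each crossing edge map $c_e \colon \imath_{\bar e}(G_e) \to G_{t(e)}$ is a quasi-isometric embedding between these flats, with uniform constants (there are finitely many vertex and edge types). Then I would invoke Lemma~\ref{arbrespd}: if $\Gamma$ is a finite tree and every crossing edge map is a quasi-isometric embedding, then $\mathcal G$ is seemingly undistorted. (If $\Gamma$ is not a tree, one passes to the statement as in Definition~\ref{defspd}, working with reduced edge-paths $p$ in $\Gamma$; the composition of finitely many — at most $\mathrm{diam}(\Gamma)$, after removing backtracking since $p h p^{-1}$ forces no backtracking in a tree, and in general controlled — uniformly quasi-isometric embeddings is again a uniform quasi-isometric embedding.) Either way this is the routine half.

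\textbf{Item (2): tight dynamics.} This is the main obstacle and the substance of the lemma. I must produce constants $K \ge 0$, $C \ge 1$ so that whenever $\tilde e, \tilde f$ are consecutive edges in $\widetilde\Gamma$ with $\tilde e$ maximal at $a$ or $b$ (with respect to $\tilde f$) and $L_\Gamma(b^{-1}a) \ge K$, the displacement inequality holds. The key case is when the two flats $\widetilde\imath_{\bar e}(G_e)$ and $\widetilde\imath_{\bar f}(G_f)$ inside the vertex-coset share only the (conjugate of the) fiber line $\langle z \rangle$. Here I would argue: the quantity $L_\Gamma(b^{-1}a) - L_\Gamma(b_{\tilde f}^{-1} a_{\tilde f})$ measures how much "shortening" occurs when we project $\tilde c_{\tilde e}(a), \tilde c_{\tilde e}(b)$ onto the $\tilde f$-flat. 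Since the two flats cross transversally in the fiber direction and the meridian images generate a free (hence undistorted, exponentially diverging) subgroup of the hyperbolic quotient $\pi_1^o$, the projection to $\widetilde\imath_{\bar f}(G_f)$ kills essentially only the common-fiber component; the part of $b^{-1}a$ transverse to the fiber must be "paid for" by $L_\Gamma(a^{-1}_{\tilde f}\tilde c_{\tilde e}(a)) + L_\Gamma(b^{-1}_{\tilde f}\tilde c_{\tilde e}(b))$, up to a linear factor coming from the quasi-isometry constants and the Gromov-hyperbolicity constant of $\pi_1^o(B_v)$. I would make this precise by passing to the central quotient $G_v \to \pi_1^o(B_v)$: there, $\langle z\rangle$ dies, the two edge-flats become two infinite cyclic subgroups $\langle \mu_e\rangle$, $\langle \mu_f\rangle$ generating a free group, so they are "at least linearly separated" in the hyperbolic group $\pi_1^o(B_v)$ in the sense of Definition~\ref{alls} (or directly: two distinct quasi-convex virtually cyclic subgroups of a hyperbolic group diverge linearly), and then I lift this separation back through the central extension, noting that the $\langle z\rangle$-direction contributes the same amount on both sides and cancels in the difference $L_\Gamma(b^{-1}a) - L_\Gamma(b_{\tilde f}^{-1}a_{\tilde f})$. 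The maximality hypothesis on $\tilde e$ is what guarantees $b^{-1}a$ cannot be absorbed entirely into a single flat, so that the transverse component is genuinely present. The one subtlety to handle carefully is the degenerate sub-case where $\tilde e$ and $\tilde f$ do share a larger common subgroup — but maximal Seifert decomposition and the genuineness hypothesis (no torus-bundle pieces, via Lemma~\ref{preparation}(2)) rule out coincident fiber directions, so the intersection is always exactly $\langle z\rangle$, and the finitely many gluing types give uniform constants.
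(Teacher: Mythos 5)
Your overall architecture (verify the two hypotheses of Lemma \ref{spdtdpd}) matches the paper's, and you correctly isolate the key facts: boundary flats are quasi-isometrically embedded and two flats at a vertex meet exactly in the fiber line (Lemma \ref{preparation}(2)). However, both items have genuine gaps. For Item (1), your argument only covers reduced edge-paths of bounded length, which is what Lemma \ref{arbrespd} and ``a composition of finitely many q.i.\ embeddings'' handle. When $\Gamma$ contains loops --- the typical situation for a graph manifold --- a reduced edge-path $p$ with $[php^{-1}]=g$ can be arbitrarily long, winding around a loop many times, and a composition of $n$ uniform quasi-isometric embeddings is only a quasi-isometric embedding with constants growing with $n$; your parenthetical ``and in general controlled'' is precisely the statement that needs proof. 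The paper closes this by noting that, by Lemma \ref{preparation}(2), only powers of the regular fiber can be conjugated through two or more edges, and then analyzing the image of a fiber power under the composite crossing map around a simple loop: either the length is exactly preserved, or the image acquires a nontrivial meridian component $\eta_l^{nm}$, after which the path cannot be continued at all. Without this trichotomy Item (1) is not established.

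For Item (2), passing to the hyperbolic quotient $\pi_1^o(B_v)$ addresses only half of the problem, namely the linear separation of the two meridian directions (the paper gets the analogous fact from the exponential divergence of boundary circles in the fiber surface, via Lemma \ref{preparation}). What is missing is quantitative control of the crossing map itself. Your claim that the $\langle z\rangle$-direction ``contributes the same amount on both sides and cancels in the difference'' is not justified and fails in general: the gluing matrix mixes the fiber and meridian coordinates, so the fiber component of $c_e(b^{-1}a)$ in $G_{t(e)}$ is not simply the fiber component of $b^{-1}a$ in $G_{i(e)}$, and the difference $L_\Gamma(b^{-1}a)-L_\Gamma(b_{\tilde{f}}^{-1}a_{\tilde{f}})$ can a priori be large even when $c_e(a)$ and $c_e(b)$ lie close to the $\tilde{f}$-flat. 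This is exactly where the paper invokes the quantitative Anosov estimate (Lemma \ref{Anosov}, second item): whenever the crossing map shortens a fiber power, the meridian component it produces has length at least a fixed fraction of the original length. Your proposal contains no substitute for this estimate, so the displayed inequality of Definition \ref{ggtd} is not derived; the maximality hypothesis alone only tells you the transverse component is nonzero, not that it is linearly bounded below.
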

We first need to understand the behavior of the gluing homeomorphisms. Recall that an orientation-preserving diffeomorphism $\phi \colon \mt^2 \rightarrow \mt^2$ is called {\em Anosov} if there is $\lambda> 1$ and an eigenspace decomposition of the tangent bundle $T \mt^2 = T^u \mt^2 \oplus T^s \mt^2$ such that at each point $x \in \mt^2$,one has $||D_x\phi(v)|| = \lambda
||v||$ (resp. $||D_x\phi(v)|| = \frac{1}{\lambda} ||v||$) for all $v
\in T^u_x \mt^2$ (resp. for all $v \in T^s_x \mt^2$).  From the Nielsen-Thurston classification (see \cite{FLP} for instance), any orientation-preserving homeomorphism of the torus $\mt^2$ which is not isotopic to a periodic homeomorphism, nor to an Anosov homeomorphism is isotopic to a reducible homeomorphim which fixes a simple closed curve $C$ and acts as a twist on the annulus $\mt^2 \setminus \{C\}$. Since we are only interested in the action of the gluing homeomorphisms on the fundamental group of the torus, we may thus assume that they are either periodic (i.e. finite-order), Anosov, or reducible. Moreover, from \cite{Franks} and \cite{Newhouse}, any Anosov diffeomorphism of the torus is conjugate to a linear one, that is defined by a linear map of $\mr^2$ whose matrix is in $SL_2(\mz)$ with no eigenvalues of modulus $1$. We thus only have to deal with linear Anosov maps.
 \begin{definition}
 Let $\phi$ be a linear Anosov diffeomorphism, and denote by $v_u, v_s$ the unit eigenvectors associated to the eigenvalues $\lambda_{\phi} > 1$ and $\frac{1}{\lambda_{\phi}} < 1$. Any element $\gamma$ of the fundamental group of the torus corresponds to a vector $v_\gamma$ (with integer coefficients in the canonical basis). In the basis given by $(v_u, v_s)$ we have $v_\gamma = a_u(\gamma) v_u + a_s(\gamma) v_s$. The {\em $\phi$-foliation length of $\gamma$}, is defined by 
 $$|\gamma|_\phi = |a_u(\gamma)| + |a_s(\gamma)|$$
 and coincides with the length of $\gamma$ in the generating set $\{v_u, v_s\}$. The {\em $\phi$-slope of $\gamma$}, is given by 
 $$sl_\phi(\gamma) = \displaystyle \frac{|a_u(\gamma)|}{|a_s(\gamma)|}.$$
 \end{definition}
The following lemma concerning linear Anosov diffeomorphisms is classic, but we include it here for completeness because we couldn't find a reference.
\begin{lemma}
\label{Anosov}
Let $\phi \colon \mt^2 \rightarrow \mt^2$ be a linear Anosov diffeomorphism of the torus. For any $\gamma \in \pi_1(\mt^2)=\left<S\right>$, with $S$ a fixed finite generating set, there exists $M_\gamma \geq 0$ such that for any $n \in \mz$:
\begin{enumerate}
  \item $\min_{j \in \mz} \{L_S(\phi^j(\gamma^n))\}$ is attained for $-M_\gamma \leq j \leq M_\gamma$ and $L_S(\phi^j(\gamma^n)) > L_S(\gamma^n)$ for $|j| > M_\gamma$. In particular, no element $\gamma$ is sent by $\phi$ to a power of itself. Moreover $L_S(\phi^j(\gamma^n))$ grows exponentially in $n$, for any $|j| \geq M_\gamma$.
  \item If $\eta\in \pi_1(\mt^2)$ is such that $\{\gamma,\eta\}$ generate $\pi_1(\mt^2)$ then there is $C_{\gamma,\eta} \geq 1$ such that, if $\phi^j(\gamma^n) = \gamma^{un} \eta^{vn}$ with $L_S(\phi^j(\gamma^n)) < L_S(\gamma^n)$ then $L_S(\eta^{vn}) \geq \frac{1}{C_{\gamma,\eta}} L_S(\gamma^n)$.
\end{enumerate}
\end{lemma}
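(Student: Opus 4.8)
The plan is to translate everything into elementary linear algebra of the matrix $A\in SL_2(\mz)$ that $\phi$ induces on $\pi_1(\mt^2)=\mz^2$. On a finitely generated abelian group any two word-length functions attached to finite generating sets --- and likewise the $\phi$-foliation length $|\cdot|_\phi$, which is the $\ell^1$-norm in the eigenbasis $(v_u,v_s)$ --- are bi-Lipschitz equivalent with \emph{purely multiplicative} constants; fix $C_0=C_0(S)\ge 1$ with $\frac1{C_0}|w|_\phi\le L_S(w)\le C_0|w|_\phi$ for all $w\in\pi_1(\mt^2)$. In the eigenbasis $\phi$ acts as $\mathrm{diag}(\lambda_\phi,\lambda_\phi^{-1})$, so for $\gamma=a_u(\gamma)v_u+a_s(\gamma)v_s$ and all $j,n\in\mz$ one has
$$|\phi^j(\gamma^n)|_\phi=|n|\,g_\gamma(j),\qquad g_\gamma(j):=\lambda_\phi^{\,j}|a_u(\gamma)|+\lambda_\phi^{-j}|a_s(\gamma)|.$$
The one structural input is that $\lambda_\phi$ is \emph{irrational}: it is a root of the monic integer polynomial $x^2-\tr(A)x+1$, which has no rational root since $|\tr(A)|\ge 3$. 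Hence for $j\neq 0$ the matrix $A^j$ has no rational eigenvector; in particular no nonzero $\gamma\in\mz^2$ lies on an eigenline, so $a_u(\gamma)\neq 0$ and $a_s(\gamma)\neq 0$.

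For part~(1) I analyse the one-variable function $g_\gamma$. It is strictly convex on $\mr$ (a positive combination of $e^{\pm j\log\lambda_\phi}$), tends to $+\infty$ at $\pm\infty$, and has a unique real minimizer $j^\ast=\frac{1}{2\log\lambda_\phi}\log\frac{|a_s(\gamma)|}{|a_u(\gamma)|}$. Thus $\{j\in\mz:g_\gamma(j)\le C_0^2 g_\gamma(0)\}$ is a finite set containing $0$, and I set $M_\gamma:=\max\{|j|:g_\gamma(j)\le C_0^2 g_\gamma(0)\}\ge 0$; note it depends on $\gamma$ only through $|a_s(\gamma)|/|a_u(\gamma)|$ and on $C_0$, never on $n$. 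For $n\neq 0$ and $|j|>M_\gamma$,
$$L_S(\phi^j(\gamma^n))\ \ge\ \tfrac1{C_0}|n|\,g_\gamma(j)\ >\ C_0|n|\,g_\gamma(0)\ \ge\ L_S(\gamma^n),$$
which is the claimed strict inequality, and since $\min_{j'\in\mz}L_S(\phi^{j'}(\gamma^n))\le L_S(\phi^0(\gamma^n))=L_S(\gamma^n)$ the minimum is attained in $[-M_\gamma,M_\gamma]$. If $\phi(\gamma)=\gamma^k$ for some $\gamma\neq 1$ then $\gamma$ would be an eigenvector of $A$ with integer eigenvalue $k$, impossible by irrationality --- this is the ``no power of itself'' clause. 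Finally $g_\gamma(j)\ge\lambda_\phi^{|j|}\min\{|a_u(\gamma)|,|a_s(\gamma)|\}$, so $L_S(\phi^j(\gamma^n))\ge\frac{1}{C_0}|n|\lambda_\phi^{|j|}\min\{|a_u(\gamma)|,|a_s(\gamma)|\}$, which (for fixed $n\neq0$) grows at least exponentially in $|j|$ once $|j|\ge M_\gamma$; this is the last assertion of~(1).

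For part~(2), since $\{\gamma,\eta\}$ generates $\pi_1(\mt^2)=\mz^2$ the pair $(v_\gamma,v_\eta)$ is a $\mz$-basis; write $\phi^j(\gamma)=u\,v_\gamma+v\,v_\eta$ with $u,v\in\mz$, so that $\phi^j(\gamma^n)=\gamma^{un}\eta^{vn}$. The hypothesis $L_S(\phi^j(\gamma^n))<L_S(\gamma^n)$ forces $j\neq 0$ (for $j=0$ the two sides coincide); then $A^j$ has no rational eigenvector, so $\phi^j(\gamma)\notin\mr v_\gamma$, i.e.\ $v\neq 0$, hence $|v|\ge 1$ as $v$ is an integer. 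Therefore
$$L_S(\eta^{vn})\ \ge\ \tfrac1{C_0}|vn|\,|\eta|_\phi\ \ge\ \tfrac{|\eta|_\phi}{C_0}|n|\ \ge\ \tfrac{|\eta|_\phi}{C_0^2\,|\gamma|_\phi}\,L_S(\gamma^n),$$
and $C_{\gamma,\eta}:=\max\{1,\,C_0^2|\gamma|_\phi/|\eta|_\phi\}$ works.

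I do not expect a real obstacle here --- as the authors note, the lemma is classical and the whole argument is one-variable calculus on $g_\gamma$. The two points that need care are: that the comparison between word lengths on $\mz^2$ and $|\cdot|_\phi$ is \emph{purely multiplicative} (no additive constant), so that multiplying through by $|n|$ preserves all the inequalities; and that $M_\gamma$ and $C_{\gamma,\eta}$ be chosen independently of $n$ and of the sign of $j$, which the choices above ensure. The single conceptual ingredient, rather than an obstacle, is the irrationality of $\lambda_\phi$: it is what simultaneously forbids $\phi(\gamma)=\gamma^k$ in~(1) and forces the $\eta$-coordinate $v$ to be nonzero in~(2).
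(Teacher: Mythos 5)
Your proof is correct, and for Item (1) it follows the same basic strategy as the paper: pass to the $\phi$-foliation length (checking, as you rightly emphasize, that the comparison with $L_S$ is purely multiplicative on $\mz^2$) and exploit the diagonal action of $\phi$ in the eigenbasis. The organization differs: the paper reduces to $n=1$ via the invariance $sl_\phi(\gamma^n)=sl_\phi(\gamma)$ and tracks the recursion $sl_\phi(\phi(\gamma))=\lambda_\phi^2\, sl_\phi(\gamma)$, whereas you work directly with the convex function $g_\gamma(j)=\lambda_\phi^{j}|a_u(\gamma)|+\lambda_\phi^{-j}|a_s(\gamma)|$ and the exact homogeneity $|\phi^j(\gamma^n)|_\phi=|n|\,g_\gamma(j)$; the two are equivalent, and your version makes the uniformity of $M_\gamma$ in $n$ completely transparent. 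You also make explicit the one arithmetic input the paper leaves implicit, namely that $\lambda_\phi$ is irrational so that $a_u(\gamma),a_s(\gamma)\neq 0$ --- without this, $g_\gamma$ need not tend to $+\infty$ in both directions and $M_\gamma$ would not exist. For Item (2) your route is genuinely different and simpler: instead of the paper's argument that the slope at the minimizing power $j$ is bounded away from $0$ and $\infty$ (from which a ``simple computation'' extracts the constant), you observe that $v$ is an integer which is nonzero because $A^j$ ($j\neq 0$) has no rational eigenvector, hence $|v|\geq 1$ and $L_S(\eta^{vn})\gtrsim |n|\gtrsim L_S(\gamma^n)$; this proves the stronger statement that the conclusion holds for every $j\neq 0$, not only when the length decreases. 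One caveat: the last clause of Item (1) as literally stated (``grows exponentially in $n$'') is not what you prove, nor what is true --- $L_S(\phi^j(\gamma^n))$ is linear in $n$ for fixed $j$; your reading (exponential growth in $|j|$, uniformly in $n$) is the correct interpretation and is what the application in Lemma \ref{intermediaire1} actually uses, but you should flag that you are correcting the statement rather than proving it as written.
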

\begin{proof}
Since $S$ is finite, the word metric is quasi-isometric to the metric given by the $\phi$-foliation length, so it is enough to prove the lemma for this $\phi$-foliation length defined above.

\underline{Item (1)}: It suffices to do the case $n=1$.  The general case follows from the equality $sl_\phi(\gamma) = sl_\phi(\gamma^n)$ for any $n \in \mz$, which is deduced straight from the definition of $sl_\phi(\gamma)$. A simple calculation gives the equivalence: 
$$|\phi(\gamma)|_\phi < |\gamma|_\phi \Leftrightarrow sl_\phi(\gamma) < \lambda_{\phi},$$ 
showing that the slope is bounded, and the equality 
$$sl_\phi(\phi(\gamma)) = \lambda^2_\phi sl_\phi(\gamma),$$ 
implying that the slope grows applying $\phi$. Of course the analogous relations hold when applying $\phi^{-1}$, namely $|\phi^{-1}(\gamma)|_\phi < |\gamma|_\phi \Leftrightarrow sl_\phi(\gamma) > \frac{1}{\lambda_{\phi}}$, and hence $sl_\phi(\phi^{-1}(\gamma)) = \frac{1}{\lambda^2_\phi} sl_\phi(\gamma)$.  In both cases, since $\lambda_\phi > 1$ the minimum is attained after a number of iterations which only depends on $sl_\phi(\gamma)$ and $\lambda_\phi$.

\underline{Item $(2)$}: We first consider the $j^{th}$ power at which the minimum given by Item $(1)$ is attained. Then $sl_\phi(\gamma^{un} \eta^{vn})$ is close (that is up to a constant depending only on $\phi,\gamma$) to $1$. Since $sl_\phi(\gamma^{un}) = sl_\phi(\gamma)$, a simple computation gives a constant $D \geq 1$, depending on $\gamma$ and $\eta$, such that the power of $\eta$ at this minimum is at least $D$ times the power of $\gamma$. Since $-M_\gamma \leq j \leq M_\gamma$ and $j \neq 0$, it follows the existence of such a constant $D$ for each one of these powers. We so get the existence of a constant $C_{\gamma,\eta} \geq 1$ such that for any integer $n$, the length of the power of $\eta$ in $\phi^j(\gamma^n)$ is at least $\displaystyle \frac{1}{C_{\gamma,\eta}}$ times the length of $\gamma^n$.
\end{proof}
We can now prove Lemma \ref{intermediaire1}
\begin{proof}[Proof of Lemma \ref{intermediaire1}]
\underline{\it Part (1): Seemingly polynomial distortion}. Since the boundary subgroups of the Seifert manifolds are quasi isometrically embedded, the crossing edge maps are quasi isometric embedding. If the edge-length of a reduced edge-path is strictly greater than the diameter $D$ of the maximal tree in $\Gamma$, this edge-path contains a loop. If all reduced edge-paths $p$ such that there exist vertex-elements $g$ and $h$ with $[g] = [p^{-1} h p]$ as in Definition \ref{defspd} have edge-length smaller than $D+1$ then $\mathcal G$ seemingly has at most polynomial distortion, in fact is undistorted, since a finite composition of q.i. embeddings is a q.i. embedding. Without loss of generality, it is thus sufficient to consider a simple loop $p=e_1 \cdots e_l$ in $\Gamma$. It follows from Lemma \ref{preparation} (2) that, to prove the seemingly at most polynomial distortion, we only have to study what happens to (the powers of the) element defined by the regular fibers of the Seifert manifolds associated to the vertex-groups when ``passing through $p$''. To be more precise, let us choose arbitrarily one of them, say $G_v$ with $v =i(p)$, and let us denote by $\gamma$ this element. We denote by $\eta_l$ the element defined by the meridian of the boundary component corresponding to the subgroup $\imath_{e_l}(G_{e_l})$. We want to understand the form of $c_p(\gamma)=c_{e_l} \circ c_{e_{l-1}} \circ \cdots \circ c_{e_1}(\gamma)$. By the Nielsen-Thurston classification and Lemma \ref{Anosov}, the three possibilities are (we recall that each fiber of each boundary component is a regular fiber - see Lemma \ref{classic Seifert}):
\begin{enumerate}
\item[Case 1:] $c_p(\gamma^n) = \gamma^{\pm n}$ and the length of $\gamma$ is preserved.
\item[Case 2:] $c_p(\gamma^n) = \gamma^{\pm n} \eta^{nm}_l$ for some non-zero integer $m$.
\item[Case 3:] $c_p(\gamma^n) = \gamma^{nj} \eta^{nm}_l$ for some integer $j$ with $|j| \neq 1$ and non-zero integer $m$.

\end{enumerate}

In case 1 the length is preserved so no distortion. In cases $2$ and $3$, there is no edge-path $q$ containing $p$ for which there exist elements $g$ and $h$ with $[g] = [q^{-1} h q]$. It follows that $\mathcal G$ seemingly has at most polynomial distortion, in fact is seemingly undistorted.

\bigskip

\underline{\it Part (2): Tight dynamics}.
We consider an edge $e$ with vertices $v=i(e)$ and $w=t(e)$, we need to understand the crossing map $c_e$ on $\gamma_v$ and $\gamma_w$ the regular fibers of the Seifert manifolds associated to $G_v$ and $G_w$ and $\eta$ the meridian of $G_w$.

If $c_e(\gamma^n_v)$ satisfies Cases $1$ or $2$ above, then the first condition of Definition \ref{ggtd} is satisfied.

Let us thus assume that $c_e(\gamma^n_v)$ satisfies Case $3$. If $j \neq 0$, $c_e$ is induced by a pseudo-Anovov homeomorphism. Assume first that $L_{G_w}(c_e(\gamma^n_v)) \geq L_{G_v}(\gamma^n_v)$ holds. Then either the first condition of Definition \ref{ggtd} is satisfied or the exponential dilatation given by Item $(1)$ of Lemma \ref{Anosov} gives us a constant $C \geq 1$, depending only on $\mathcal G$, such that the length of $\eta^{mn}$ in $c_e(\gamma^n_v)$ is at least $\frac{1}{C}$ times the length of $\gamma^n_v$.

If $c_e(\gamma^n_v)$ satisfies Case $3$, we therefore only have to consider the case where $L_{G_w}(c_e(\gamma^n_v)) < L_{G_v}(\gamma^n_v)$ holds. Item $(3)$ of Lemma \ref{Anosov} gives in this case a constant $C \geq 1$, depending only on $\mathcal G$, such that the length of $\eta^{mn}$ in $c_e(\gamma^n_v)$ is at least $\frac{1}{C}$ times the length of $\gamma^n_v$.

We still have to prove that there is at least ``linear divergence'' from the other boundary subgroups, and their left-classes, when the power of $\eta$ increases in order to get the constant $C$ required in the second alternative of Definition \ref{ggtd}. By Item $(1)$ of Lemma \ref{preparation}, the Seifert manifold fibers over the circle with fiber a surface $S$. The geodesics associated to elements defined by the boundary circles in $\partial S$ of two distinct boundary components of the manifold, or a boundary circle and a non-trivial conjugate of it, diverge exponentially in the surface. Since the monodromy is of finite order (Item $(1)$ of Lemma \ref{preparation}), the same assertion holds in the $3$-manifold. From Item $(2)$ of Lemma \ref{preparation}, this exponential divergence also holds between the cyclic subgroup $\langle \eta \rangle$ and any (left-class of a) different boundary subgroup, or any different left-class of the same boundary subgroup. Hence the existence of a constant $C \geq 1$ as announced. The case $j=0$ in Case $3$ is now straightforward from which precedes. We so proved that $\mathcal G$ has tight dynamics.
\end{proof}
\begin{proof}[Proof of Proposition \ref{GraphMfldPolDisto}] Combine the above result with Lemma \ref{spdtdpd}.
\end{proof}
\subsection{Mixed $3$-manifolds}\label{sectionmixed} We borrow the terminology of mixed $3$-manifold from \cite{PrWise} and call {\em mixed $3$-manifold} any genuine non-atoroidal $3$-manifold which is neither a Seifert nor a graph manifold. The structure of mixed manifolds is well-understood since the celebrated {\em JSJ-decomposition}, that we now recall. 
\begin{theorem}[JSJ-decomposition, \cite{JacoShalen}]
\label{JSJD}
Given any mixed $3$-manifold $M$, there exist a possibly empty family of maximal
Seifert or graph submanifolds $J_1,\cdots,J_r$ in $M$ and of incompressible tori $T_1,\cdots,T_l$ not belonging to these former submanifolds
such that the closure of each connected component of $M \setminus
(\bigcup^r_{i=1} J_i \cup \bigcup^l_{i=1} T_i)$ is a compact $3$-manifold with hyperbolic interior, the boundary of which is a union of incompressible tori.
\end{theorem}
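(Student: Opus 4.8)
The plan is to derive the statement from the classical torus decomposition of Haken manifolds together with the Geometrization Theorem. Since $M$ is a mixed manifold it is non-atoroidal, hence contains an essential torus and is Haken, and being genuine (Definition \ref{genuine}) it is compact, orientable, irreducible and $\partial$-irreducible. First I would invoke the Jaco--Shalen--Johannson theory: there is a finite collection of pairwise disjoint, pairwise non-parallel incompressible tori $\Sigma_1,\dots,\Sigma_N$, canonical up to isotopy and minimal among collections with this property, such that the closure of every connected component of $M\setminus\bigcup_i\Sigma_i$ is either Seifert-fibered or atoroidal. The existence rests on Haken's finiteness theorem (bounding the number of disjoint non-parallel incompressible surfaces), and the canonicity on the characteristic submanifold theorem; the genuine hypothesis ($M$ not finitely covered by a torus bundle) is what excludes the degenerate pieces ($T^2\times[0,1]$, twisted $I$-bundles over the Klein bottle, Sol-pieces) for which this decomposition misbehaves.

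Next, for each atoroidal piece $P$ of this decomposition I would apply the Geometrization Theorem (\cite{Perelman}, \cite{BBMBP}): $P$ is compact, orientable, irreducible, atoroidal, with non-empty torus boundary (it is glued along at least one $\Sigma_i$), hence $P$ is either Seifert-fibered or has hyperbolic interior. I would then assemble the Seifert pieces: let $J_1,\dots,J_r$ be the closures of the connected components of the union of all JSJ pieces that are Seifert-fibered (including any atoroidal piece which happens to be Seifert) together with those tori $\Sigma_i$ lying between two such pieces. Each $J_k$ is again genuine---irreducibility and $\partial$-irreducibility are inherited from $M$ via incompressibility of the $\Sigma_i$, and $J_k$ is not finitely covered by a torus bundle because $M$ is mixed---so $J_k$ is either a single Seifert manifold or a non-trivial gluing of Seifert manifolds along incompressible tori, i.e.\ a graph manifold. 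Because $M$ is mixed, it is neither Seifert nor a graph manifold, so the atoroidal non-Seifert (hyperbolic) pieces are non-empty; set $\{T_1,\dots,T_l\}$ to be the tori $\Sigma_i$ that are not interior to any $J_k$. By construction the closure of each connected component of $M\setminus(\bigcup_k J_k\cup\bigcup_i T_i)$ is an atoroidal, non-Seifert JSJ piece, hence has hyperbolic interior, and its boundary is a union of components of $\partial M$ and copies of some $\Sigma_i$, all incompressible tori.

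The main obstacle is the torus decomposition theorem itself: producing a \emph{canonical minimal} system of splitting tori, and controlling which Seifert and atoroidal pieces may be glued along which tori, is the deep content of the work of Jaco--Shalen and of Johannson, which I would cite rather than reprove---doing so from scratch would amount to developing characteristic submanifold theory. Beyond that the only remaining work is routine bookkeeping: checking that each $J_k$ and each complementary hyperbolic piece is $\partial$-irreducible with incompressible torus boundary, and verifying that the genuine hypothesis (together with the conventions already fixed for Seifert manifolds, whose base orbifolds are required to have hyperbolic fundamental group) really does eliminate all the exceptional small pieces. I would cite \cite{JacoShalen} for the decomposition and make only these verifications, together with the appeal to Geometrization for the atoroidal pieces, explicit.
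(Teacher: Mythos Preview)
The paper does not prove Theorem \ref{JSJD}; it is stated with a citation to \cite{JacoShalen} and used as a black box. Your outline is therefore not being compared against a proof in the paper but against the literature, and as such it is the standard derivation: classical JSJ gives a minimal torus system whose complementary pieces are Seifert or atoroidal, Geometrization upgrades the atoroidal non-Seifert pieces to finite-volume hyperbolic, and grouping adjacent Seifert pieces yields the maximal Seifert-or-graph submanifolds $J_k$. This is correct and is exactly how one would justify the statement if pressed.

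One small point to tighten: your claim that each $J_k$ is genuine ``because $M$ is mixed'' is not quite the right reason. A $J_k$ has non-empty boundary (it is glued to a hyperbolic piece), so it cannot be a closed torus bundle over $\ms^1$; and the minimality of the JSJ tori already excludes $T^2\times I$ pieces. The twisted $I$-bundle over the Klein bottle is Seifert-fibered and can legitimately occur as a $J_k$; the paper's convention (Proposition \ref{classic Seifert}) that the base orbifold has hyperbolic fundamental group is what rules it out of the class called ``Seifert manifold'' here, so you should either note that such a piece is absorbed into an adjacent hyperbolic piece's cusp or acknowledge it as a harmless degenerate case handled by the paper's conventions. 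Otherwise your bookkeeping is fine.
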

The JSJ-decomposition gives us the following graph of groups decomposition of the fundamental group of a mixed manifold.
\begin{lemma}
\label{tautologie}
The fundamental group of a mixed $3$-manifold $M$ is the fundamental group of a graph of groups ${\mathcal G}$ with the following properties:
\begin{enumerate}
  \item The vertex-groups are fundamental groups of genuine $3$-manifolds with boundary tori which have either hyperbolic interior, in which case are termed {\em hyperbolic vertices}, or are Seifert or graph manifolds. The edge-groups are $\mz \oplus \mz$-subgroups. The edge-morphisms are induced by tori-homeomorphisms and their images are the boundary subgroups of the vertex-groups, into which they are quasi isometrically embedded.
  \item For any edge $e$, if some vertex-group of $e$ is the fundamental group of a Seifert or graph manifold, then the other one is a hyperbolic vertex.
\item The graph of groups ${\mathcal G}$ satisfies the at least linear separation property and seemingly has at most polynomial distortion, in fact is seemingly undistorted.

\end{enumerate}
\end{lemma}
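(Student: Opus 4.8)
The plan is to extract the graph of groups $\mathcal G$ from the JSJ-decomposition of $M$ via Bass--Serre theory, and then verify the three asserted properties in turn; (3) is where the real work lies. Concretely: apply Theorem \ref{JSJD}, so that cutting $M$ along the JSJ tori $T_1,\dots,T_l$ together with the boundary tori of the maximal Seifert/graph submanifolds $J_1,\dots,J_r$ decomposes $M$ into the $J_i$ and into compact pieces with hyperbolic interior and toral boundary. Take $\Gamma$ to be the dual graph (one vertex per piece, one edge per cutting torus, a loop when the same piece lies on both sides). As the cutting tori are incompressible, Bass--Serre theory / van Kampen gives $\pi_1(M)=\pi_1(\mathcal G)$ with $G_v=\pi_1(\text{piece})$, $G_e=\pi_1(T)=\mz\oplus\mz$, and $\imath_e$ induced by including the torus as a boundary component of the adjacent piece, whose image is therefore a boundary subgroup; this is the combinatorial content of (1). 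The quasi-isometric embedding of the boundary subgroups is handled piece by piece: at a hyperbolic vertex, $\pi_1$ of a finite-volume cusped hyperbolic $3$-manifold is hyperbolic relative to its cusp (boundary) subgroups by \cite{Farb}, whose peripheral subgroups are undistorted; at a Seifert vertex it is Lemma \ref{preparation}(2); at a graph-manifold vertex a boundary subgroup lies in one of the Seifert pieces, quasi-isometrically embedded there by Lemma \ref{preparation}(2), while that Seifert piece is undistorted in the graph manifold by Proposition \ref{GraphMfldPolDisto} and Remark \ref{defpratique}, so the embedding is quasi-isometric by composition. In particular every crossing edge map $c_e$ (and each lift $\widetilde c_{\tilde e}$) is a quasi-isometric embedding, with constants uniform in $e$ since $\Gamma$ is finite.

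\textbf{Property (2).} Here I would use maximality of the $J_i$. If an edge of $\Gamma$ had both endpoints among the Seifert/graph vertices, the corresponding cutting torus would have a Seifert or graph manifold on each side; gluing these two pieces (or gluing the one piece to itself) along that torus yields a Seifert or graph submanifold of $M$ properly enlarging them, contradicting maximality of the $J_i$ (equivalently, the minimality of the JSJ-decomposition). Hence every edge of $\Gamma$ has at least one hyperbolic endpoint, which is (2); equivalently, along any reduced edge-path in $\Gamma$ or in $\widetilde\Gamma$, no two consecutive vertices are both non-hyperbolic.

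\textbf{Property (3).} For the at least linear separation property (Definition \ref{ggalls}) I would take $N=3$: a reduced edge-path of edge-length $\ge 3$ in $\widetilde\Gamma$ has at least two interior vertices, necessarily adjacent, so by (2) one of them, say $\tilde w$, is hyperbolic; the two consecutive path-edges $\tilde e,\tilde f$ meeting at $\tilde w$ satisfy $\tilde e\ne\bar{\tilde f}$, so $\widetilde\imath_{\tilde e}(G_e)$ and $\widetilde\imath_{\bar{\tilde f}}(G_f)$ are distinct boundary subgroups of $G_{\tilde w}$. Since $G_{\tilde w}$ is hyperbolic relative to its finitely many boundary subgroups, that family is almost malnormal and its distinct peripheral cosets diverge at least linearly; unwinding Farb's bounded-coset-penetration estimates, this is exactly the statement that the family (hence any two-element subfamily) has at least linear separation in the sense of Definition \ref{alls}, with constants uniform over the finitely many hyperbolic vertices. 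For seemingly (no) distortion (Definition \ref{defspd}), let $g\in G_v\setminus\{1\}$, let $p=e_1\cdots e_l$ be a reduced edge-path at $v$, and $h\in G_{t(p)}$ with $[php^{-1}]=g$. Reducing the $\mathcal G$-loop $php^{-1}$ from the inside out forces $h\in\imath_{e_l}(G_{e_l})$, then $c_{\bar e_l}(h)$ to lie in $\imath_{\bar e_l}(G_{e_l})\cap\imath_{e_{l-1}}(G_{e_{l-1}})$ inside the interior vertex-group $G_{i(e_l)}$, and so on; injectivity of the $c$'s keeps all these elements nontrivial (since $g\ne1$), so each interior vertex of $p$ carries a nontrivial element in an intersection of two distinct boundary subgroups. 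At a hyperbolic interior vertex such an intersection is finite by almost malnormality, hence trivial as $\pi_1(M)$ is torsion-free — a contradiction; thus every interior vertex of $p$ is non-hyperbolic, and by (2) there is at most one of them, i.e. $l\le 2$. Then $g=c_{\bar e_1}\circ\cdots\circ c_{\bar e_l}(h)$ is the image of $h$ under a composition of at most two uniformly quasi-isometric crossing maps, so $L_\Gamma(g)\le A^2 L_\Gamma(h)+B'\le(A^2+B')\,L_\Gamma(php^{-1})$ with $A,B'$ depending only on $\mathcal G$, which is seemingly undistorted. Together with the at least linear separation property this establishes (3).

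\textbf{Main obstacle.} The delicate point is the at least linear separation half of (3): converting the soft statement ``$G_{\tilde w}$ is relatively hyperbolic'' into the quantitative divergence inequality of Definition \ref{alls}, uniformly over all hyperbolic vertex-groups and all cosets, and organizing the interplay with (2) so that a hyperbolic vertex necessarily occurs in the interior of a sufficiently long reduced edge-path. The seemingly-undistorted half is comparatively soft, resting only on torsion-freeness and almost malnormality to force any conjugating edge-path to have length at most $2$.
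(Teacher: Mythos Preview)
Your proof is correct and follows essentially the same route as the paper's: extract $\mathcal G$ from the JSJ-decomposition, use maximality of the Seifert/graph pieces for (2), and for (3) combine the bounded coset penetration property at the hyperbolic vertices with the alternation forced by (2). The one genuine difference is in the \emph{seemingly undistorted} half of (3). The paper is terse there (``composition of q.i.\ embeddings is a q.i.\ embedding''), leaving implicit why only boundedly many crossing maps need to be composed. You make this explicit and in fact sharper: using Britton's lemma you observe that any interior vertex of a reduced edge-path $p$ with $[php^{-1}]=g\neq 1$ carries a nontrivial element in the intersection of two distinct boundary subgroups, which is impossible at a hyperbolic vertex by almost malnormality together with torsion-freeness of $\pi_1(M)$; hence all interior vertices are non-hyperbolic, and (2) forces $l\le 2$. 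This is a clean way to pin down the bound the paper's sentence needs, and it uses nothing beyond what is already available in the paper.
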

\begin{proof}
\underline{\it Part (1).} Consider any Seifert or graph submanifold $M_1$ in the decomposition of $M$ given by Theorem \ref{JSJD}. It is not separated from another Seifert or graph manifold $M_2$ by an incompressible torus because otherwise $M_1 \cup M_2$ is a graph submanifold of $M$ containing both $M_1$ and $M_2$ which is a contradiction with the maximality of the submanifolds in the JSJ-decomposition of Theorem \ref{JSJD}. 

\medskip

\underline{\it Part (2).} Any Seifert or graph submanifold in this decomposition is glued along its boundary tori to manifolds with hyperbolic interior. Since the incompressible tori given by Theorem \ref{JSJD} might only belong to submanifolds with hyperbolic interior, the conclusion follows. 

\medskip

\underline{\it Part (3).} The collection of boundary subgroups of the fundamental group of a finite volume manifold with hyperbolic interior $M$ is an almost malnormal collection ${\mathcal H}$ of $\mz^2$ coming from the incompressible tori $T_1,\cdots,T_l$ satisfying the following (also called Bounded Coset Penetration Property, see \cite{Farb} - this is a key-point in proving that the fundamental group of the manifold being strongly relatively hyperbolic with respect to the boundary subgroups):

There are constants $C,D > 0$ such that, for any two distinct boundary subgroups $H_0, H_1\in {\mathcal H}$ and elements $l_i \in H_i$ ($i=0,1$), geodesics representing $l_0$ and $l_1$ are $D$-close one to each other along a length of at most $C$ and outside this region diverge exponentially. This also holds for any loop in $H_i$ and a conjugate by an element $g \notin H_i$. This is a consequence of the malnormality and the thinness of the geodesic rectangles not contained in the boundary subgroups.


This bounded coset penetration property implies the at least linear separation property of Definition \ref{alls} for the family formed by the embeddings of the edge-subgroups into any hyperbolic vertex-group. Part (2) of this lemma implies that in any $\mathcal G$-sequence any vertex-group which is not hyperbolic is preceded and followed by a hyperbolic vertex-group. Hence $\mathcal G$ satisfies the at least linear separation property of Definition \ref{ggalls}. Since the embeddings of the boundary tori in graph, Seifert or hyperbolic manifolds are quasi isometric embeddings, this same argument gives the seemingly at most polynomial distortion, in fact seemingly undistortion since a composition of q.i. embeddings is a q.i. embedding.
\end{proof}
Lemma \ref{tautologie} gives readily gives the following

\begin{proposition}
\label{mixedprop}
The graph of groups $\mathcal G$ is undistorted.
\end{proposition}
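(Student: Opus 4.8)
The plan is to obtain \ref{mixedprop} as an immediate consequence of the work already carried out, with no new argument required: one simply feeds the structural output of Lemma \ref{tautologie} into Lemma \ref{lspd}. Indeed, part (3) of Lemma \ref{tautologie} records two facts about the graph of groups $\mathcal G$ attached to a mixed $3$-manifold $M$: that $\mathcal G$ is \emph{seemingly undistorted} (Definition \ref{defspd} with constant polynomial), and that $\mathcal G$ \emph{satisfies the at least linear separation property} of Definition \ref{ggalls}. These are precisely the hypotheses of the second assertion of Lemma \ref{lspd}, whose conclusion is that $\mathcal G$ is undistorted in the sense of Definition \ref{defpd}. So the proof is a one-line invocation.

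If one wants to see why nothing is lost, recall how Lemma \ref{lspd} operates: given a vertex-group element $g = [pq^{-1}]$ with $p,q$ reduced ${\mathcal T}_{\mathcal G}$-sequences of edge-length $n \geq N_{\ref{ggalls}}$, it locates (within the first $N_{\ref{ggalls}}$ edges) a vertex where the pair of edge-subgroups has at least linear separation, cuts $p$ and $q$ there, and then bounds $L_\Gamma(g)$ by $L_\Gamma(pq^{-1})$ through the trichotomy on the entry distance $L_\Gamma(d_1^{-1}c_1)$. Because by Lemma \ref{tautologie}(1) every crossing edge map in play is a quasi-isometric embedding, and because only boundedly many of them are composed before one reaches such a separating vertex, the argument incurs only a multiplicative constant rather than a polynomial in $n$; iterating along the (boundedly pieced) decomposition keeps this constant, so the distortion is linear, i.e.\ the graph of groups is undistorted.

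I expect no remaining obstacle here: the substantive content has been discharged in Lemma \ref{tautologie} (the bounded coset penetration and almost malnormality of the boundary subgroups in the hyperbolic JSJ pieces, giving at least linear separation, together with the quasi-isometric embedding of all boundary tori) and in Lemma \ref{lspd} (seemingly undistorted plus at least linear separation $\Rightarrow$ undistorted). The only thing to verify is that the two sets of hypotheses match verbatim, which they do, so \ref{mixedprop} follows at once.
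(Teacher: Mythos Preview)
Your proposal is correct and matches the paper's own proof exactly: the paper simply states that the result is implied by part (3) of Lemma \ref{tautologie} together with Lemma \ref{lspd}. Your additional paragraph unpacking how Lemma \ref{lspd} operates is accurate but goes beyond what the paper itself provides.
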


\begin{proof}
This is implied by $(3)$ of Lemma \ref{tautologie} and Lemma \ref{lspd}.
\end{proof}

\begin{corollary}\label{mixed}
Fundamental groups of mixed $3$-manifolds have Rapid Decay property.
\end{corollary}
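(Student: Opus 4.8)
The plan is to read off $\pi_1(M)$ as the fundamental group of a suitable graph of groups and then simply feed it into the stability result Proposition~\ref{distortion}. First I would invoke Lemma~\ref{tautologie} to write $\pi_1(M)=\pi_1(\mathcal G)$, where $\mathcal G=(\Gamma,\{G_e\},\{G_v\},\{\imath_e\})$ is a graph of groups whose edge-groups are $\mz\oplus\mz$-subgroups and whose vertex-groups are the fundamental groups of genuine $3$-manifolds with torus boundary that are either of hyperbolic interior, or Seifert, or graph manifolds. By Proposition~\ref{mixedprop} the graph of groups $\mathcal G$ is undistorted; since ``undistorted'' is the constant-polynomial case of ``at most polynomial distortion'', Remark~\ref{apdlpd} gives that $\mathcal G$ has loose polynomial distortion. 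All vertex-groups are finitely generated, being $3$-manifold groups.

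The second step is to verify that every vertex-group of $\mathcal G$ has the Rapid Decay property. For a hyperbolic vertex, i.e.\ the fundamental group of a compact $3$-manifold with hyperbolic interior and boundary a union of incompressible tori, Rapid Decay holds by Proposition~\ref{evidence} (which treats the finite-volume interior case). For a Seifert vertex it holds by Corollary~\ref{SeifertRD}. For a graph-manifold vertex $G_v=\pi_1(M_v)$, Proposition~\ref{GraphMfldPolDisto} expresses $G_v$ as the fundamental group of an undistorted graph of groups whose vertex-groups are fundamental groups of Seifert manifolds with boundary; these have Rapid Decay by Corollary~\ref{SeifertRD}, so Proposition~\ref{distortion}, applied one level down, yields that $G_v$ itself has Rapid Decay. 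This step is not circular: the graph-manifold case uses only the independently established Seifert case and Proposition~\ref{distortion}, exactly the bootstrapping already carried out in the proof of Theorem~\ref{sur les 3-varietes}.

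Finally, I would apply Proposition~\ref{distortion} to $\mathcal G$: it has loose polynomial distortion, its vertex-groups are finitely generated, and each vertex-group has Rapid Decay by the previous paragraph; hence $\pi_1(\mathcal G)=\pi_1(M)$ has Rapid Decay. The main point requiring care — rather than a genuine obstacle — is the graph-manifold vertex case, where one must be explicit that the argument bootstraps on the Seifert case and does not loop back on Corollary~\ref{mixed} itself; everything else is a direct assembly of Lemma~\ref{tautologie}, Proposition~\ref{mixedprop} and Proposition~\ref{distortion}.
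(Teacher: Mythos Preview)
Your proposal is correct and follows essentially the same route as the paper's own proof: invoke Lemma~\ref{tautologie} and Proposition~\ref{mixedprop} for the graph-of-groups structure and its (loose) polynomial distortion, check Rapid Decay for the vertex-groups via Proposition~\ref{evidence}, Corollary~\ref{SeifertRD}, and the combination Proposition~\ref{GraphMfldPolDisto}~$+$~Proposition~\ref{distortion} for graph-manifold vertices, and conclude with Proposition~\ref{distortion}. Your write-up is just more explicit about the bootstrapping step for graph-manifold vertices, which the paper leaves implicit in its citation of Proposition~\ref{GraphMfldPolDisto}.
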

\begin{proof}
This comes from Proposition \ref{distortion}, combined with Proposition \ref{mixedprop} with the fact that all the vertex subgroups have Rapid Decay property (Propositions \ref{evidence} and \ref{GraphMfldPolDisto}).
\end{proof}

\bibliographystyle{plain}
\bibliography{biblioRD}

\end{document}